\def\smf@datereception{18 juin 2019}
\def\smf@dateaccepte{19 janvier 2022}
\newcommand\yh{}
\newcommand \tetrecitw{\textit}
\newcommand\tetrecitwi{}
\newcommand\restrict[1]{_{|#1}}
\newcommand\cX{{\mathring{X}}}
\newcommand{\Span}{{\operatorname{Span}}}
\renewcommand{\div}{\operatorname{div}}
\newcommand{\Gmin}{{\operatorname{G}}}
\newcommand\face{{\mathcal F}}
\newcommand\mcD{{\mathcal D}}
\newcommand\mcU{{\mathcal U}}
\newcommand\uo{{\underline{o}}}
\newcommand\cone{{\mathcal C}}
\newcommand\X{{\mathcal X}}
\newcommand\Yc{{\mathcal Y}}
\newcommand\Tau{{\mathcal T}}
\newcommand\lh{{\mathfrak h}}
\renewcommand\lg{{\mathfrak g}}\newcommand\Liel{{\mathfrak l}}
\newcommand\ad{{\operatorname{ad}}}
\newcommand\Image{{\operatorname{Im}}}
\newcommand\Ker{{\operatorname{Ker}}}
\newcommand\Ho{{\operatorname{H}^0}}
\newcommand\Hd{{\operatorname{H}}}
\newcommand\Lie{{\operatorname{Lie}}}
\newcommand\Aut{{\operatorname{Aut}}}
\newcommand\Hom{{\operatorname{Hom}}}\newcommand\Pic{{\operatorname{Pic}}}
\newcommand\lp{{\mathfrak p}}
\newcommand\ZZ{{\mathbb Z}}\newcommand\NN{{\mathbb
N}}\newcommand\QQ{{\mathbb Q}}
\newcommand\CC{\mathbb C}\newcommand\PP{\mathbb P}
\newcommand\longto{\longrightarrow}
\newcommand\Li{{\mathcal L}}\newcommand\Mi{{\mathcal
M}}\newcommand\Ni{{\mathcal N}}
\newcommand\bl{\ell}
\newcommand\bs{\text{\fontencoding{T1}\fontfamily{frc}\fontseries{m}\fontshape{n}\selectfont
s}}
\newcommand\inv{^{-1}}
\newcommand{\Stab}{\operatorname{Stab}}
\newcommand\cChi{{\mathring\Chi}}
\newcommand\cmcX{\mathring{\mathcal{X}}}
\newcommand\Chi{{\mathfrak{X}}}
\newcommand\dcirc[1]{\stackinset{c}{}{t}{-3.5pt}
   {$\mkern2.5mu\scriptscriptstyle\circ\mkern-2mu\circ$}{$#1$}}
\newcommand\ccdX{{\dcirc{\mathcal{X}}}}
\newcommand\ccChi{{\dcirc{\Chi}}}
\newcommand\cbeta{{\mathring{\beta}}}
\newcommand\eps{{\varepsilon}}
\newcommand{\mcX}{\mathcal{X}}
\newtheorem{thm}{Theorem}[section]
\newtheorem{lemma}[thm]{Lemma}
\newtheorem{PROP}[thm]{Proposition}
\newtheorem{CORO}[thm]{Corollary}
\newtheorem{conjecture}[thm]{Conjecture}
\theoremstyle{remark} 
\newtheorem{remark}[thm]{Remark}
\title[On faces of the tensor cone of  KM Lie algebras]{On the faces of
the tensor cone\\ of symmetrizable Kac-Moody Lie algebras}
\author[S. Kumar]{Shrawan Kumar}
\address{Department of Mathematics\\University of North
Carolina\\Chapel Hill, NC 27599-3250, USA}
\email{shrawan@email.unc.edu}
\author[N. Ressayre]{Nicolas Ressayre}
\address{Université Claude Bernard Lyon 1, ICJ UMR5208, CNRS, Ecole
Centrale de Lyon, INSA Lyon, Université Jean Monnet, 69622
Villeurbanne, France.}
\email{ressayre@math.univ-lyon1.fr}
\begin{abstract}
In this paper, we are interested in the decomposition of the tensor
product of two representations of
a symmetrizable Kac-Moody Lie algebra $\lg$, or more precisely in the
tensor cone of~$\lg$.
As usual, we parametrize the integrable, highest weight (irreducible)
representations of~$\lg$ by their highest weights. Then,
the triples of such representations such that the last one is
contained in the tensor product of the  first two is a semigroup.
This semigroup  generates a rational convex cone $ \Gamma(\lg)$ called
tensor cone.
If $\lg$~is finite-dimensional, $\Gamma(\lg)$~is a polyhedral convex
cone. In 2006, Belkale and the first author described this cone by an
explicit finite list of
inequalities.
In 2010, this list of inequalities was proved to be irredundant by  the
second author:
each such inequality corresponds to a codimension one face.
In general, $\Gamma(\lg)$~is neither polyhedral, nor closed.
Brown and the first author obtained a list of inequalities that
describe $\Gamma(\lg)$ conjecturally. Here, we prove that each of
these inequalities corresponds to a codimension one face of~$\Gamma(\lg)$.
\end{abstract}
\begin{document}
	%\smffirst{1825}
\maketitle
\lefthyphenmin4
\righthyphenmin4
%\selectlanguage{english}
\frenchspacing
\lefthyphenmin4
\righthyphenmin4

\section{Introduction}\label{sec1}

Let $A$~be a symmetrizable irreducible GCM (generalized Cartan matrix) of size~$l+1$.
Let~$\lh\supset\{\alpha_0^\vee,\dots,\alpha_l^\vee\}$ and
$\lh^*\supset\{\alpha_0,\dots,\alpha_l\}=:\Delta$ be a realization of~$A$ over the complex numbers~$\mathbb{C}$. 
We fix an integral form~$\lh_\ZZ\subset\lh$ containing each~$\alpha_i^\vee$, such that ${\lh^*_\ZZ:=\Hom(\lh_\ZZ,\ZZ)}$ contains $\Delta$
and such that $\lh_\ZZ/\bigoplus_i\ZZ\alpha_i^\vee$~is torsion-free.

Set ${\lh_\QQ^*=\lh_\ZZ^*\otimes\QQ\subset\lh^*}$,  
${P_{+,\QQ}:=\{\lambda\in\lh_\QQ^*\,:\,\langle\alpha_i^\vee,\lambda\rangle\geq
0\quad\forall i\}}$, and $P_+:=\lh_\ZZ^*\cap P_{+,\QQ}$.

Let $\lg=\lg(A)$~be the associated Kac-Moody (KM) Lie algebra over~$\mathbb{C}$ with Cartan
subalgebra~$\lh$. 
For~$\lambda\in P_+$,  $L(\lambda)$ denotes the (irreducible) integrable,
highest weight representation of~$\lg$ with highest weight~$\lambda$. 
Define the (rational) {\it tensor cone} as~$$
\Gamma(\lg):=\{(\lambda_1,\lambda_2,\mu)\in P_{+,\QQ}^3:\exists N\geq 1
\,\text{such that}\, L(N\mu)\subset L(N\lambda_1)\otimes L(N\lambda_2)\}.
$$ 

The aim of this paper is to describe  facets (codimension one faces) of this cone. 
Before describing our result, we
recall from \cite{BrownKumar} a conjectural
description of~$\Gamma(\lg)$, due to Brown and the first author.
We need some more notation. 

Fix $\{x_0,\dots,x_l\}\in \lh$ to be dual of the simple roots: $\langle
\alpha_j, x_i\rangle=\delta_i^j$.
Let~$Q=\bigoplus_{i=0}^l\ZZ\alpha_i$ denote the root lattice.
Let~$X =G/B$  be the standard full KM-flag variety
associated to~$\lg$, where $G$~is the `minimal' Kac-Moody group with Lie algebra $\lg$ and $B$~is the standard Borel subgroup of~$G$.  For~$w$ in the Weyl group  $W$ of~$G$, let $X_w = \overline{BwB/B}\subset X$~be the corresponding Schubert variety. Let $\{\eps^w\}_{w\in W} \subset \Hd^*(X,\ZZ)$~be the 
(Schubert) basis dual (with respect to  the standard pairing) to the
basis of the singular homology of~$X$ given by the fundamental classes
of~$X_w$.

Let $P\supset B$~be a (standard) parabolic subgroup and let $X_P:=G/P$~be the corresponding partial flag variety. Let $W_P$~be the Weyl group of~$P$ (which is, by definition, the Weyl group of the Levi $L$ of~$P$) and let $W^P$~be the set of minimal length  representatives 
of cosets in~$W/W_P$. 
The projection map~$X\to X_P$ induces an injective homomorphism $\Hd^*(X_P, \ZZ) \to \Hd^*(X, \ZZ)$ and $\Hd^*(X_P, \ZZ) $~has the Schubert basis
$\{\eps^w_P\}_{w\in W^P}$ such that~$\eps^w_P$ goes to~$\eps^w$ for
any~$w\in W^P$. As defined by Belkale and the first author \cite[$\S$6]{BK} in the finite-dimensional case and extended by the first author in \cite{Kumar:CDSWconj} for any symmetrizable Kac-Moody case (see \cite[\S,7]{BrownKumar} for more details), there is a new deformed product $\odot_0$ in~$\Hd^*(X_P, \ZZ)$, which is commutative and associative. Now, we are ready to state Brown-Kumar's conjecture \cite{BrownKumar}.

\begin{conjecture} \label{conj}
Let $\lg$~be any indecomposable symmetrizable Kac-Moody  Lie algebra
and let~$(\lambda_1,\lambda_2, \mu)\in P_+^{3}$. Assume further that none of~$\lambda_j$  and $\mu$ are $W$\yh-invariant and\break $\mu-\sum_{j=1}^2 \lambda_j\in Q$.
Then, the following are equivalent:

{\rm(a)}  $(\lambda_1,\lambda_2, \mu)\in \Gamma(\lg)$.

{\rm(b)} For every standard maximal parabolic subgroup~$P$ in~$G$ and every choice of
triples  $(w_1,w_2, v)\in (W^P)^{3}$ such that~$\eps_P^v$ occurs with coefficient~$1$ in 
the deformed product
$$\eps_P^{w_1}\odot_0 \eps_P^{w_2}
\in \bigl(\Hd^*(X_P,\ZZ), \odot_0\bigr),$$
  the following inequality  holds:
    \[
\lambda_1(w_1x_{P})+ \lambda_2(w_2x_{P})-\mu(vx_P)\geq 0, \tag{$I^P_{(w_1, w_2,v)}$}
\]
where $\alpha_{i_P}$~is the (unique) simple root not in the Levi of~$P$
and $x_P:=x_{i_P}$.
\end{conjecture}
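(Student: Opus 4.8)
The plan is to prove the two implications of Conjecture~\ref{conj} separately: the implication (a)$\Rightarrow$(b) is within reach of now-standard geometry, while (b)$\Rightarrow$(a) is the substantial part.

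\emph{Necessity, (a)$\Rightarrow$(b).} First I would rephrase membership in $\Gamma(\lg)$ geometrically. By Borel--Weil for the minimal Kac--Moody group $G$, the condition $L(N\mu)\subset L(N\lambda_1)\otimes L(N\lambda_2)$ is equivalent to the non-vanishing of the space of $G$-invariant sections of a $G$-linearized line bundle $\Li=\Li_{N\lambda_1,N\lambda_2,N\mu}$ on the ind-variety $Z:=G/B\times G/B\times G/B^-$, the third factor carrying the lowest-weight dual of $L(N\mu)$. Now fix a standard maximal parabolic $P$ and a triple $(w_1,w_2,v)\in(W^P)^3$ with $\epsilon_P^v$ occurring in $\epsilon_P^{w_1}\odot_0\epsilon_P^{w_2}$; then $\epsilon^v$ occurs in the ordinary cup product $\epsilon^{w_1}\cdot\epsilon^{w_2}$ in $\Hd^*(X,\ZZ)$, so by the transversality of generic translates of Schubert varieties in Kac--Moody flag varieties (Kumar, as used in \cite{BrownKumar}) the intersection $g_1X_{w_1}\cap g_2X_{w_2}\cap g_3\Sigma_v$, with $\Sigma_v$ the Schubert variety indexed by $v$ in the third factor, is non-empty of the expected dimension for generic $(g_1,g_2,g_3)\in G^3$. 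A non-zero invariant section of $\Li^{\otimes N}$ then restricts non-trivially near a generic point of such an intersection; computing the $T$-weight of the fibre of $\Li$ at a $T$-fixed point of $g_1X_{w_1}\cap g_2X_{w_2}\cap g_3\Sigma_v$ and using that the leading exponents of a section are non-negative forces exactly $I^P_{(w_1,w_2,v)}$. This is the Hilbert--Mumford/Hodge-theoretic argument of Berenstein--Sjamaar and Belkale--Kumar \cite{BK}, and it goes through once the Kac--Moody Schubert calculus is in place.

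\emph{Sufficiency, (b)$\Rightarrow$(a).} I would split this in two steps. \emph{Step 1 (reduction to the ordinary product).} Show that the inequalities $I^P_{(w_1,w_2,v)}$ attached to $\odot_0$-nonzero triples already imply the inequalities attached to all triples for which $\epsilon^v$ merely occurs in the ordinary product $\epsilon^{w_1}\cdot\epsilon^{w_2}$. For such a triple the defect
$$d_P(w_1,w_2,v):=\bigl(\chi^P_{w_1}+\chi^P_{w_2}-\chi^P_v\bigr)(x_P),\qquad \chi^P_w:=\sum_{\beta}\beta\ \text{ over positive roots }\beta\notin\text{(roots of the Levi of }P)\text{ with }w\inv\beta<0,$$
is a non-negative integer, vanishing precisely for the $\odot_0$-triples. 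When $d_P>0$ one expresses $I^P_{(w_1,w_2,v)}$ as a non-negative combination of inequalities attached either to $P$ with strictly smaller defect, or to a proper Levi subgroup of $P$, and closes the induction on $d_P$ and on the rank. This is the Belkale--Kumar redundancy mechanism; the inputs needed in the Kac--Moody setting — compatibility of $\odot_0$ with restriction to Levi subgroups, and factorization of ordinary structure constants through intermediate Schubert classes — follow from Kumar's construction of $\odot_0$ for symmetrizable Kac--Moody flag varieties \cite{Kumar:CDSWconj}. \emph{Step 2 (the ordinary-product characterization).} Invoke (or prove) the Kac--Moody analogue of Berenstein--Sjamaar's theorem: under the stated hypotheses, $(\lambda_1,\lambda_2,\mu)\in\Gamma(\lg)$ as soon as every ordinary-product inequality $I^P_{(w_1,w_2,v)}$ holds. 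If this has to be established, one runs GIT on $Z$: the hypotheses that no $\lambda_j$ is $W$-invariant and that $\mu-\lambda_1-\lambda_2\in Q$ ensure that $\Li$ restricts to an ample bundle on the finite-dimensional Schubert approximations $Z_n\subset Z$ and that the centre of $G$ does not obstruct invariant sections; Mumford's numerical criterion on $Z_n$ shows that the ordinary inequalities force a semistable point of $Z_n$ for $n\gg0$, and a limiting argument — using that the destabilizing one-parameter subgroups may be taken in a fixed finite-dimensional torus and that the relevant orbits stay in a fixed Schubert cell — produces a genuinely semistable point of $Z$, hence a non-zero invariant section of $\Li^{\otimes N}$, hence (a).

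\emph{Main obstacle.} The crux is Step~1 of the sufficiency direction. In finite type the Belkale--Kumar redundancy argument leans on the polyhedrality of the eigencone, whereas $\Gamma(\lg)$ is neither polyhedral nor closed; one therefore has to carry out the defect induction entirely at the level of the (infinitely many) inequalities and their Levi recursions, with no recourse to the global shape of the cone, and one must prove that the recursion terminates — that every ordinary-product triple is reached from the $\odot_0$-triples in finitely many defect-lowering steps. The likely secondary difficulty, if the ordinary-product characterization is not quoted, is the ind-variety GIT of Step~2: controlling semistability along the Schubert exhaustion, where the failure of properness and the non-closedness of $\Gamma(\lg)$ along its $W$-invariant strata are precisely the phenomena the non-degeneracy hypotheses are designed to avoid.
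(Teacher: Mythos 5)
The statement you are proving is Conjecture~\ref{conj}, and the first thing to say is that the paper does not prove it: it is stated as a conjecture, known only in the finite-dimensional case (Belkale--Kumar \cite{BK}) and in the untwisted affine case (Theorem~\ref{thm0.1}, due to Ressayre \cite{R:KM1}), and the authors say explicitly that the general symmetrizable case is open. What the paper actually proves is the irredundancy statement, Theorem~\ref{th:codimface}, namely that each inequality $I^P_{(w_1,w_2,v)}$ cuts out a codimension-one face of $\Gamma(\lg)$. So your text cannot be measured against a proof in the paper; it has to stand on its own as a proof of an open conjecture, and it does not.

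The direction (a)$\Rightarrow$(b) is indeed known (it is established in \cite{BrownKumar}), and your outline of it is plausible. The genuine gaps are both steps of (b)$\Rightarrow$(a). Step~2 ``invokes (or proves)'' a Kac--Moody analogue of Berenstein--Sjamaar; no such theorem exists in the symmetrizable setting, and the GIT sketch you give to supply it is exactly where the infinite-dimensional phenomena are fatal: $\mathcal{X}$ is only an ind-variety, $\Gamma(\lg)$ is neither closed nor polyhedral, an invariant section (equivalently a semistable point) on a finite-dimensional Schubert approximation $Z_n$ gives no control over $Z_{n'}$ for $n'>n$ or over the limit, and the claim that destabilizing one-parameter subgroups ``may be taken in a fixed finite-dimensional torus'' with orbits confined to a fixed Schubert cell is asserted, not proved --- it is precisely the uniformity that fails and that forced Ressayre, in the affine case, to argue by a completely different induction. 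Step~1 (the defect induction reducing ordinary-product inequalities to $\odot_0$-inequalities) is likewise only named, not carried out; in finite type it rests on finite-dimensional geometry and the polyhedrality of the cone, and you yourself flag that neither is available here and that termination of the recursion is unverified. Identifying these obstacles is useful, but a plan whose two essential steps are left as acknowledged obstacles is not a proof; at present your proposal establishes nothing beyond what is already in \cite{BrownKumar}.
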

 
Note that if $\lambda_1$~is $W$\yh-invariant, $L(\lambda_1)$~is one-dimensional and hence $L(\lambda_1)\otimes L(\lambda_2)$~is
irreducible. 

In the case where~$\lg$~is a semisimple Lie algebra,
Conjecture~\ref{conj} was proved by Belkale and the first author in \cite{BK}. The
following result is due to the second author.
\begin{thm}[\cite{R:KM1}]\label{thm0.1} 
In the case where~$\lg$~is affine untwisted, Conjecture~\ref{conj} holds. 
\end{thm}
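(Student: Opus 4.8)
The plan is to prove the equivalence (a)$\Leftrightarrow$(b) by recasting membership in $\Gamma(\lg)$ as a GIT-semistability statement for the minimal Kac--Moody group $G$ and then transporting the affine problem to a finite-dimensional one. For a dominant weight $\nu$ let $P(\nu)\supseteq B$ be the associated standard parabolic and $\Li_\nu$ the corresponding $G$-line bundle. Using the Borel--Weil theorem for $G$, and the resulting description of the algebra of $G$-invariant sections of powers of a line bundle on a product of partial flag varieties in terms of tensor-product multiplicity spaces, one sees that $(\lambda_1,\lambda_2,\mu)\in\Gamma(\lg)$ is equivalent to $\X^{\mathrm{ss}}\neq\emptyset$, where $\X$ is a suitable triple product of partial flag varieties of $G$ carrying a suitable $G$-linearized line bundle $\Li$ and $\X^{\mathrm{ss}}$ denotes its semistable locus. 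Since $\X$ is an ind-variety, one works with the version of Mumford's GIT and of the Hilbert--Mumford numerical criterion available in that setting; as every relevant one-parameter subgroup is, up to conjugacy and the Weyl group action, dominant, and the dominant chamber in $\lh_\QQ$ is spanned by the cocharacters $x_P$ attached to the standard maximal parabolics $P$, it suffices to test the criterion against these.

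The implication (a)$\Rightarrow$(b) is insensitive to the affine hypothesis and follows the familiar route: starting from a semistable point, for each maximal parabolic $P$ one translates the three factors into Schubert cells with prescribed relative positions $(w_1,w_2,v)$, applies the numerical criterion to $x_P$, and reads off $I^P_{(w_1,w_2,v)}$. The hypothesis that $\eps^v_P$ occurs with coefficient exactly $1$ in $\eps^{w_1}_P\odot_0\eps^{w_2}_P$ --- equivalently, that the triple is Levi-movable with governing intersection number $1$ --- is precisely what guarantees that the three factors can be simultaneously translated into the prescribed relative position with a single, reduced, point of intersection, which is what makes the resulting numerical estimate both available and sharp.

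The heart of the matter is the converse, (b)$\Rightarrow$(a), and this is where the untwisted affine hypothesis enters. Since the level --- the value of the canonical central element --- is additive under tensor product, $\Gamma(\lg)$ lies in the hyperplane where the level of $\mu$ equals the sum of the levels of $\lambda_1$ and $\lambda_2$, and by homogeneity it suffices to produce a single nonzero invariant. Write $\lg=\lg_0\otimes\CC[t,t^{-1}]\oplus\CC c\oplus\CC d$, with $\lg_0$ finite-dimensional simple and underlying group $G_0$. I would then prove and use a comparison theorem relating, for each standard maximal parabolic $P$ of $G$, the coefficient-one structure constants of $\odot_0$ on $\Hd^*(X_P,\ZZ)$ to equal-to-one Gromov--Witten invariants of the finite-dimensional flag variety $G_0/P_0$ (in the spirit of the affine-cohomology $\leftrightarrow$ quantum-cohomology dictionary). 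Under this comparison the system $\{I^P_{(w_1,w_2,v)}\}$ becomes a system of quantum Schubert calculus (``quantum Horn'') inequalities for the various $G_0/P_0$, which by a quantum-to-classical principle can be re-expressed through ordinary Schubert calculus on auxiliary finite-dimensional flag varieties; applying the finite-dimensional theorem of Belkale--Kumar there, one concludes that the validity of every inequality forces $\X^{\mathrm{ss}}\neq\emptyset$, hence $(\lambda_1,\lambda_2,\mu)\in\Gamma(\lg)$. Along the way one copes with the non-closedness of $\Gamma(\lg)$: the inequalities cut out a closed cone, so one argues first for rational points lying strictly inside at the ``imaginary'' facet and then treats the remaining boundary points separately.

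The main obstacle is the comparison and reduction of the previous paragraph: establishing, uniformly over all standard maximal parabolics $P$, the precise dictionary between the affine deformed product (with its coefficient-one constraint) and an \emph{already complete}, classically understood system of inequalities on the finite-dimensional side. A secondary but genuine difficulty is foundational --- setting up GIT, the Hilbert--Mumford criterion, and the classification of optimal destabilizing one-parameter subgroups for the ind-variety $\X$, and reconciling the a priori infinitude of the inequalities with the finiteness required to run the semistability argument, the latter being controlled by the conjugacy statement for one-parameter subgroups noted above.
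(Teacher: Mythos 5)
The statement you are proving is not proved in this paper at all: it is quoted from \cite{R:KM1}, and the authors describe that proof as an \emph{inductive} argument in which the local polyhedrality of the cone $\cone$ (the very property that Theorem~\ref{th:codimface} re-establishes here) plays an essential role. Ressayre's actual strategy is to reduce, along each regular face of the cone, to the analogous statement for the Levi subgroup of a parabolic (a smaller, eventually finite-dimensional, Kac--Moody group), using well-covering pairs and an isomorphism of invariants of the type appearing in Theorem~\ref{th:restCisom}. Your sketch takes a genuinely different route for the converse direction, and that route contains the main gap.

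Concretely: your (a)$\Rightarrow$(b) direction is essentially the argument of Brown--Kumar and is fine in outline. But your (b)$\Rightarrow$(a) direction rests entirely on a ``comparison theorem'' identifying the coefficient-one structure constants of $\odot_0$ on $\Hd^*(X_P,\ZZ)$ for \emph{all} standard maximal parabolics $P$ of the affine group with Gromov--Witten invariants of the finite-dimensional flag varieties $G_0/P_0$, followed by an appeal to a complete system of ``quantum Horn'' inequalities. Neither ingredient exists in the required form. The known affine/quantum dictionaries (Peterson, Lam--Shimozono) concern the homology of the affine Grassmannian and $QH^*(G_0/P_0)$, not the deformed product on arbitrary partial affine flag varieties, and even granting such a dictionary there is no theorem asserting that the resulting quantum inequalities characterize nonemptiness of the semistable locus. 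A second, unresolved foundational issue is your use of GIT and the Hilbert--Mumford criterion on the ind-variety $\X$: the non-closedness of $\Gamma(\lg)$ already shows that the naive equivalence between semistability and existence of invariant sections fails here, and your last paragraph does not supply the needed substitute. So the heart of the converse is missing; the established proof instead proceeds by induction on the group via faces of the cone, which is precisely the machinery this paper is developing.
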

The
conjecture in the general symmetrizable case is still open.  But it is
conceivable that the inductive proof in the case of affine
$\mathfrak{g}$ obtained by the second author might be amenable to handle the general symmetrizable case. 

Let us come back to the case where~$\lg$~is semisimple. 
Then, $\Gamma(\lg)$~is a closed convex polyhedral cone, and
Conjecture~\ref{conj} (Belkale-Kumar's theorem) describes $\Gamma(\lg)$ in~$(\lh_\QQ^*)^3$
by (finitely many) explicit  inequalities. (Recall that a rational cone $\mathcal{C}$~is called {\it convex} if for~$x, y\in \mathcal{C}$ and $0<\alpha <1, \alpha\in \mathbb{Q}$,  $\alpha x+ (1-\alpha) y\in \mathcal{C}$.) In the case of~$\lg =
{\mathfrak{sl}}_n$, a larger set of inequalities describing 
$\Gamma(\lg)$ was conjectured by Horn \cite{Horn:conj} and proved by Klyachko \cite{Kly} (combining the saturation result of Knutson-Tao \cite{KT:saturation}).
A  larger set of   inequalities describing $\Gamma(\lg)$ for any semisimple $\lg$ was known earlier  (see
\cite{BerSja:Mumford}). 
The  irredundancy of the above  set of inequalities~${I^P_{(w_1, w_2,v)}}$ was proved by Knutson-Tao-Woodward in type A
\cite{KTW} and by the second author in general \cite{GITEigen}. (See \cite[$\S1$] {Kumar:survey} for more details on the history.) 
The irredundancy  assertion is the statement  that each inequality~$I^P_{(w_1, w_2,v)}$ in
Conjecture~\ref{conj} corresponds to a face of~$\Gamma(\lg)$ of
codimension one. The aim of this paper is to extend this
result to any symmetrizable Kac-Moody Lie algebra. We, in fact, prove the following (stronger) result for any (not necessarily maximal) standard parabolic subgroup~$P$. 

\begin{thm}\label{th:codimface}
  Let $\lg$~be any indecomposable symmetrizable Kac-Moody  Lie
  algebra. 
Let $P$~be a  standard parabolic subgroup in~$G$ and let $(w_1,w_2, v)\in (W^P)^{3}$~be a triple such that $\eps_P^v$~occurs with coefficient~$1$ in 
the deformed product
$$\eps_P^{w_1}\odot_0 \eps_P^{w_2}
\in \bigl(\Hd^*(X_P,\ZZ), \odot_0\bigr).$$

Then, the  set of~$(\lambda_1,\lambda_2, \mu)\in \Gamma(\lg)$  such that for all $\alpha_j\not\in\Delta(P)$,
\[
\lambda_1(w_1x_{j})+ \lambda_2(w_2x_{j})-\mu(vx_j)=0 \tag{$I^j_{(w_1, w_2,v)}$}
\]
has codimension~$\sharp (\Delta\backslash \Delta(P))$ in~$\Gamma(\lg)$, where $\Delta(P) \subset \Delta$~is the set of simple roots of the Levi subgroup~$L$ of~$P$.
\end{thm}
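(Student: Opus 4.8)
The idea is to transport to the symmetrizable Kac--Moody setting the geometric proof of the finite-dimensional irredundancy theorem given by the second author in \cite{GITEigen}. Put $V:=\Span_{\QQ}\Gamma(\lg)\subseteq(\lh_\QQ^*)^3$, and for $\alpha_j\notin\Delta(P)$ let $\ell_j\colon(\lambda_1,\lambda_2,\mu)\mapsto\lambda_1(w_1x_j)+\lambda_2(w_2x_j)-\mu(vx_j)$, so that the set $\face$ appearing in the statement equals $\Gamma(\lg)\cap\bigcap_j\Ker\ell_j$ and it suffices to prove $\dim\face=\dim\Gamma(\lg)-\sharp(\Delta\setminus\Delta(P))$. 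First I would record that each $\ell_j$ is $\geq 0$ on $\Gamma(\lg)$: this is the implication (a)$\Rightarrow$(b) of Conjecture~\ref{conj}, established in \cite{BrownKumar}; for a non-maximal $P$ one reduces to the maximal parabolic $Q\supseteq P$ with $\alpha_{i_Q}=\alpha_j$, using that $W_Q$ fixes $x_j$ (whence $\lambda_1(w_1x_j)$ depends only on the coset $w_1W_Q$) and that the coefficient-one property of $\odot_0$ is compatible with the inclusions $\Hd^*(X_Q,\ZZ)\hookrightarrow\Hd^*(X_P,\ZZ)$. It follows that $\face$ is a face of $\Gamma(\lg)$; moreover, if $\tau_P$ is a one-parameter subgroup of the maximal torus with $\langle\alpha_i,\tau_P\rangle=0$ for $\alpha_i\in\Delta(P)$ and $\langle\alpha_j,\tau_P\rangle>0$ for $\alpha_j\notin\Delta(P)$, then $\face=\Gamma(\lg)\cap\Ker\ell_{\tau_P}$, where $\ell_{\tau_P}=\sum_j\langle\alpha_j,\tau_P\rangle\,\ell_j$ and where one uses again that all $\ell_j$ are $\geq 0$ on $\Gamma(\lg)$.

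Next I would set up the geometric side. Let $\mcX:=G/B\times G/B\times G/B^-$ with the diagonal $G$-action, exhausted by the $B$-stable finite-dimensional projective subvarieties $\mcX_n$ coming from Schubert varieties; for $(\lambda_1,\lambda_2,\mu)$ with $\mu-\lambda_1-\lambda_2\in Q_\QQ$ (the rational span of the simple roots) let $\Li_{(\lambda_1,\lambda_2,\mu)}$ be the associated $G$-linearized line bundle on $\mcX$. Then $(\lambda_1,\lambda_2,\mu)\in\Gamma(\lg)$ iff $\Li_{(N\lambda_1,N\lambda_2,N\mu)}$ carries a nonzero $G$-invariant section over some $\mcX_n$ for some $N\geq1$, so that $\Gamma(\lg)$ is, up to closure, the moment cone of this $G$-action. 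I would then invoke the dictionary between $\odot_0$ and transversality --- Belkale--Kumar \cite{BK} in the finite case, extended to the symmetrizable case in \cite{Kumar:CDSWconj} --- to reformulate the hypothesis: $\epsilon_P^v$ occurring with coefficient $1$ in $\epsilon_P^{w_1}\odot_0\epsilon_P^{w_2}$ says that the Levi $L$ of $P$ acts on an irreducible component $C$ of the fixed-point locus $\mcX^{\tau_P}$ --- essentially a product of three flag varieties of $L$ singled out by $(w_1,w_2,v)$ --- and that, for a suitable $w\in W$, the pair $(C,w)$ is \emph{well-covering} in the sense of \cite{GITEigen}: the natural morphism $\GPCp\to\mcX$, with $C^+$ the $\tau_P$-attracting set of $C$, is birational onto a $G$-stable dense subset.

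The heart of the proof is to run the analysis of \cite{GITEigen} through the approximations $\mcX_n$. Restricting a $G$-invariant section of $\Li_{(N\lambda_1,N\lambda_2,N\mu)}$ to $wC^+$ and passing to its lowest $\tau_P$-weight component identifies, when $(C,w)$ is well-covering, the $G$-invariant sections over $\mcX$ that are extremal for $\tau_P$ with the $L$-invariant sections over $C$ of an explicitly $\tau_P$-twisted line bundle; dually, this identifies (via an injective linear map) a $\tau_P$-twisted $L$-moment cone $\Gamma_L$ with a dense subcone of $\face$. In particular $\face\neq\emptyset$, generic points of $\Gamma_L$ give genuine points of $\face$, and $\dim\face=\dim\Gamma_L$. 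A dimension count --- adapting the one in \cite{GITEigen} so as to allow for the Cartan subalgebra $\lh$ being possibly larger than the span of the simple coroots --- identifies $\dim\Gamma_L$ with $\dim\Gamma(\lg)-\sharp(\Delta\setminus\Delta(P))$, the drop measuring the number of walls separating the dominant chamber of $L$ from that of $G$. Hence $\codim_{\Gamma(\lg)}\face=\dim\Gamma(\lg)-\dim\face=\sharp(\Delta\setminus\Delta(P))$, as desired.

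The main obstacle, relative to the finite-dimensional situation, is exactly that $\mcX$ is an ind-variety rather than a projective variety: classical GIT, the Hilbert--Mumford numerical criterion, the descent of line bundles to quotients, and the birational geometry of $\GPCp$ are not directly available, so one must carry all of them out on the finite-dimensional Schubert approximations $\mcX_n$, uniformly in $n$, and check that the relevant invariant sections --- along with their zero loci, the transversality statements, and their deformations --- stabilize once $n$ is large. For this one leans on the facts that every weight space of an integrable highest-weight module is finite-dimensional and is already realized at finite level, and on the normality and rational-singularity properties of Kac--Moody Schubert varieties. A secondary point requiring care is that $\Gamma(\lg)$ is neither closed nor polyhedral, so ``face'' and ``codimension'' must be understood via relative interiors; the density of the image of $\Gamma_L$ in $\face$ is precisely what guarantees that the points of $\face$ constructed above do not lie on a strictly smaller face of $\Gamma(\lg)$.
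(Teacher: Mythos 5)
Your overall strategy---transporting the well-covering-pair argument of \cite{GITEigen} to the Kac--Moody setting and producing points of the face $\face$ by extending $L$-invariant sections from the fixed-point component $C$---is indeed the strategy of the paper. But the proposal leaves unproved exactly the steps where the infinite-dimensional geometry bites, and these are not routine ``stabilization in $n$'' checks. First, your mechanism for making the generic points of the $L$-cone $\Gamma_L$ extend to $G$-invariant sections is missing. In \cite{GITEigen} (and here) the extension theorem, i.e.\ $\Ho(\mathcal{X},\Li)^G\simeq \Ho(C,\Li)^L$, only holds under a positivity hypothesis: $\Li$ must be nonnegative on the $T$-stable curves $\bl_{\alpha,i}$ joining $C$ to the adjacent boundary strata of $\overline{C^+}$ (Theorem~\ref{th:restCisom}). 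A generic line bundle restricting to a point of $\Gamma(\Liel)$ does \emph{not} satisfy this, so one must twist by line bundles $\Ni_{\alpha,i}$ that lie on the face, are strictly positive on $\bl_{\alpha,i}$, and carry $G$-invariant sections not vanishing on $C$. Constructing these is a genuine obstacle in the ind-variety setting: there is no abstract correspondence between divisors and line bundles on $\mathcal{X}$, and the order of the pole of an invariant rational section along a boundary divisor could a priori be infinite. The paper spends all of Section~4 making this explicit (Lemmas~\ref{lem:linebundlebord}, \ref{lemma6}, \ref{lem:LsOmega}, \ref{lem:vFfini} and Proposition~\ref{prop:linebundleSchubdiv}), together with the non-containment $C\not\subset E_{\alpha,i}$ (Proposition~\ref{prop:NC}) and the positivity Lemma~\ref{lem:Nipos}. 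None of this appears in your outline.

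Second, the restriction isomorphism itself cannot be obtained by running the finite-dimensional argument ``uniformly in $n$'': the Schubert approximations $\mathcal{X}_n$ are not $G$-stable, $C^+$ and the map $G\times^P\overline{C^+}\to\mathcal{X}$ live genuinely at the ind-variety level, and classical GIT/Luna theory is unavailable. The paper replaces this by a specific chain of reductions (slices $\X_\bs$, quotients by normal unipotent subgroups of finite codimension, Zariski's main theorem on the quotients), whose crucial input is a codimension-$\geq 2$ estimate for the image of the boundary of $\tilde\Chi_\bs$ (Lemma~\ref{newlemma13}). That estimate rests on a new combinatorial non-transversality statement (Proposition~\ref{prop:freg} and Corollary~\ref{cor:nontransv}) comparing the filtered tangent-space dimensions $d_i(\dot w, X_v^P)$ along non-simple reflections---an ingredient with no counterpart in your sketch and not deducible from the finite-dimensional case by a limiting argument. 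So while the plan is the right one, the proposal as written asserts rather than proves the two results that constitute the actual content of the paper.
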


Let~$\cone$ denote the cone determined by the inequalities in
Conjecture~\ref{conj}. 
For~$P$ maximal, Theorem~\ref{th:codimface} implies
that if one removes any of the inequalities~$I^P_{(w_1, w_2,v)}$,  the cone thus obtained is strictly larger than~$\cone$.

Theorem~\ref{th:codimface} implies that~$\cone$~is locally
polyhedral. This property of~$\cone$ plays an important role in the
inductive proof of Theorem~1 from \cite{R:KM1}. (Note that in
\cite{R:KM1}, the local polyhedrality is proved in a totally different
way.)
As a consequence, one can hopefully think about
Theorem~\ref{th:codimface} as a first step towards a proof of Conjecture~\ref{conj}.

Combining Theorems~\ref{thm0.1} and \ref{th:codimface}, we get the following.
\begin{CORO} For any untwisted affine Kac-Moody Lie algebra $\mathfrak{g}$, the inequalities~${I^P_{(w_1, w_2,v)}}$ in Conjecture~\ref{conj} give an irredundent and complete set of inequalities  determining the cone $\Gamma (\mathfrak{g})$.
\end{CORO}

To prove Theorem~\ref{th:codimface} we will use  (geometric)
Theorem~\ref{th:restCisom} below. Let us introduce some more notation.

Fix a standard parabolic subgroup~$P$ of~$G$. For~$w\in W^P$, we set~$$
\Delta^-(w)=\{\alpha\in \Delta\,:\,\ell(s_\alpha w)=\ell(w)-1\},
$$
and 
$$
\Delta^+(w)=\{\alpha\in \Delta\,:\,\ell(s_\alpha w)=\ell(w)+1\ {\rm and }\
s_\alpha w\in W^P\},
$$
where $s_\alpha$~is the (simple) reflection corresponding to the (simple) root $\alpha$. It is easy to see that for any~$\alpha\in \Delta^-(w), 
s_\alpha w\in W^P$.

Let~$B^-$ denote the Borel subgroup of~$G$ opposite to~$B$.
Consider the flag ind-variety $\mathcal{X} :=(\Gmin/B^-)^2\times \Gmin/B$ and
$\Pic^\Gmin(\mathcal{X})$ the group of~$G$\yh-linearized line bundles on~$\mathcal{X}$.
For~$\lambda \in \lh^*_\ZZ$, denote the line bundle~$\Li^-(\lambda) :=G \times^{B^-}\,\mathbb{C}_\lambda$ over~$G/B^-$ (resp.\ $\Li (\lambda) :=G \times^{B}\,\mathbb{C}_{-\lambda}$ over~$G/B$) associated to the principal $B^-$\yh-bundle $G \to G/B^-$ (resp. \ the $B$\yh-bundle $G \to G/B$) via the one-dimensional representation $\mathbb{C}_{\lambda}$ of~$B^-$ given by the character $e^\lambda$ uniquely extended to a character of~$B^-$  (resp. \ the representation $\mathbb{C}_{-\lambda}$ of~$B$ given by the character $e^{-\lambda}$). 

Fix  $(\lambda_1,\lambda_2, \mu)\in P_+^{3}$.
By an analogue of the Borel-Weil theorem for any Kac-Moody \tetrecitw{group $G$ (cf. \cite[Corollary~8.3.12]{Kumar:KacMoody}),  the  $G$\yh-linearized line bundle ${\Li :=\Li^-(\lambda_1)\boxtimes \Li^-(\lambda_2)\boxtimes \Li(\mu)}$}
 on~$\mathcal{X}$~is
such that the dimension of the space~$\Ho(\mathcal{X},\Li)^G$ of~$G$\yh-invariant
sections is the multiplicity of~$L(\mu)$ in~$L(\lambda_1)\otimes
L(\lambda_2)$ (cf. \cite[Proof of Theorem~3.2]{BrownKumar}). From this  we see that~$\Gamma(\lg)$~is a convex subset of~$P_{+, \QQ}^{3}$.

Fix $(w_1,w_2, v)\in (W^P)^{3}$ as in
Theorem~\ref{th:codimface} and let~$L\supset T $ denote the standard Levi
subgroup of~$P$, where $T$  is the standard maximal torus of~$G$ with Lie algebra $\mathfrak{h}$.
The base point~$B/B$ in~$G/B$  is denoted by~$\uo$.
Similarly,  $\uo^-=B^-/B^-$.
Set~$$x_0=(w_1\inv \uo^-,w_2\inv \uo^-,v\inv \uo)\in \mathcal{X}.$$
For~$\alpha\in \Delta^+(w_1)$, we set~$$
x_{\alpha,1}=(w_1\inv s_\alpha \uo^-,w_2\inv \uo^-,v\inv \uo)\in \mathcal{X}.
$$
Similarly, we define $x_{\alpha,2}$ associated to~$\alpha\in
\Delta^+(w_2)$. 
For~$\alpha\in \Delta^-(v)$, we  set~$$
x_{\alpha,3}=(w_1\inv \uo^-,w_2\inv \uo^-,v\inv s_\alpha \uo)\in \mathcal{X}.
$$
For any~$(\alpha,i)$ as above, we denote by~$\bl_{\alpha,i}$ the
unique $T$\yh-stable curve in~$\mathcal{X}$ containing $x_0$ and $x_{\alpha,i}$;
then $\bl_{\alpha,i}\simeq\PP^1$ and $x_0$ and $x_{\alpha,i}$ are the
two $T$\yh-fixed points in~$\bl_{\alpha,i}$. Explicitly, 
$$\bl_{\alpha,1} = \left(w_1^{-1}P_\alpha^-\uo^-, w_2^{-1}\uo^-, v^{-1}\uo\right) \subset \mathcal{X},$$
where $P_\alpha^-$~is the minimal (opposite) parabolic subgroup containing $B^-$ and $s_\alpha$. Similarly, $\bl_{\alpha,2}$ and $\bl_{\alpha,3}$  can be described explicitly. 

Consider now
$$
C=Lw_1\inv \uo^-\times Lw_2\inv \uo^-\times Lv\inv \uo,
$$
acted on by~$L$ diagonally.

\begin{thm}\label{th:restCisom}
Let~$P$ and $(w_1,w_2, v)\in (W^P)^{3}$ be  as in
Theorem~\ref{th:codimface}. 

Fix $(\lambda_1,\lambda_2, \mu)\in
(\lh_\ZZ^*)^{3}$ such that~$$
\forall \alpha_j\not\in\Delta(P), \qquad 
\lambda_1(w_1x_{j})+ \lambda_2(w_2x_{j})-\mu(vx_j)=0.
$$ 
Let~$\Li :=\Li^-(\lambda_1)\boxtimes \Li^-(\lambda_2)\boxtimes \Li(\mu)$ denote the associated line bundle on~$\mathcal{X}$. We assume that,
for any~$i=1,2$ and $\alpha\in \Delta^+(w_i)$, 
the restriction of~$\Li$ to~$\bl_{\alpha,i}$~is
nonnegative. Similarly, we assume that for any~$\alpha\in \Delta^-(v)$
the restriction of~$\Li$ to~$\bl_{\alpha,3}$~is
nonnegative. 

Then, the restriction map induces an isomorphism:
$$
\Ho(\mathcal{X},\Li)^G\simeq \Ho(C,\Li)^L.
$$
\end{thm}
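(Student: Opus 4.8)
The plan is to compare the two spaces of invariant sections via a Białynicki-Birula–type stratification attached to a suitably chosen one-parameter subgroup of $T$, and to show that the line bundle $\Li$ is supported — as far as invariant sections are concerned — on the $L$-orbit closure $C$. First I would fix a dominant one-parameter subgroup $\tau$ of $T$ whose associated parabolic is exactly $P$ (so that the Levi of the BB-decomposition of $\mathcal{X}$ at the relevant component is $L$). The point $x_0=(w_1^{-1}\uo^-,w_2^{-1}\uo^-,v^{-1}\uo)$ is $T$-fixed, and the tangent space $T_{x_0}\mathcal{X}$ decomposes under $\tau$ into positive, zero, and negative weight spaces; the zero-weight part is exactly $T_{x_0}C$. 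The curves $\bl_{\alpha,i}$, for $\alpha\in\Delta^+(w_i)$ ($i=1,2$) and $\alpha\in\Delta^-(v)$, are precisely the $T$-stable $\PP^1$'s through $x_0$ whose tangent directions at $x_0$ span the $\tau$-positive part of $T_{x_0}\mathcal{X}$ transverse to $C$; the curves through $x_0$ inside $C$ account for the zero weights. So the hypothesis is the statement that $\Li$ restricted to every $\tau$-positive $T$-curve through $x_0$ has nonnegative degree, while the equalities $\lambda_1(w_1x_j)+\lambda_2(w_2x_j)-\mu(vx_j)=0$ for $\alpha_j\notin\Delta(P)$ say exactly that the $T$-weight of the fibre $\Li_{x_0}$ is trivial on $\tau$, i.e. $\Li$ has degree $0$ on the $\tau$-fixed directions and in particular on $C$.

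Next I would set up the restriction map $\Ho(\mathcal{X},\Li)^G\to\Ho(C,\Li)^L$ and prove injectivity and surjectivity separately. For surjectivity, I would use that $C$ sits inside the BB-stratum $\mathcal{X}^0$ (the attracting set of the $\tau$-fixed component containing $x_0$), which fibers over its $\tau$-fixed locus; the key point is that $\Gmin\cdot\mathcal{X}^0$ is dense in $\mathcal{X}$, so an $L$-invariant section on $C$ extends (by $\Gmin$-translation and the density/normality of $\mathcal{X}$) to a $\Gmin$-invariant section on $\mathcal{X}$ — here one invokes the Borel–Weil description $\Ho(\mathcal{X},\Li)^G\cong[L(N\mu):L(N\lambda_1)\otimes L(N\lambda_2)]$ recalled in the excerpt to control the relevant cohomology. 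For injectivity, a $\Gmin$-invariant section vanishing on $C$ vanishes on $\Gmin\cdot C$; I would show $\Gmin\cdot C$ is dense in $\mathcal{X}$ (again using that $C$ contains the $\tau$-fixed directions and that the $\bl_{\alpha,i}$ exhaust the positive ones, so the $\Gmin$-sweep of $C$ fills up a dense open subset of $\mathcal{X}$), whence the section is zero. The nonnegativity hypotheses on the $\bl_{\alpha,i}$ are what guarantee that no invariant section is "created" or "destroyed" along the positive normal directions: on a $\PP^1$ of nonnegative degree, a section is determined by, and can be prescribed from, its value at the attracting fixed point $x_0$, while the degree-$0$ condition transverse along $C$ ensures the fibrewise comparison is an isomorphism rather than merely injective.

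The technical heart, and the step I expect to be the main obstacle, is carrying the finite-dimensional BB-decomposition argument over to the ind-variety $\mathcal{X}=(\Gmin/B^-)^2\times\Gmin/B$: one must work Schubert-cell by Schubert-cell (or on finite-dimensional $\Gmin$-stable projective subvarieties exhausting $\mathcal{X}$), check that the relevant attracting sets, their closures, and the density statements behave well in the limit, and verify that $\Ho(\mathcal{X},\Li)$ and $\Ho(C,\Li)$ are computed as inverse/direct limits compatibly with restriction. A secondary subtlety is that $C$ need not be closed or smooth, so I would either pass to its normalization or, better, realize $C$ as an open subset of a manageable $L$-variety (a product of $L$-Schubert varieties in $L/(L\cap B^-)$ and $L/(L\cap B)$) and check that $L$-invariant sections do not see the missing boundary — this uses that any $L$-invariant section is a sum of weight vectors and a codimension estimate on the complement. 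Once the geometry is in place, the cohomological comparison follows from the combination of the degree-$0$ condition along $C$, the nonnegativity along the $\bl_{\alpha,i}$, and a standard argument that under these conditions the BB-fibration induces an isomorphism on invariant sections.
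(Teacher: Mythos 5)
Your general framework --- a one-parameter subgroup $\tau$ with associated parabolic $P$, the attracting set $C^+$ of $C$, and extension/comparison of invariant sections along the flow --- is indeed the skeleton of the paper's argument (the paper uses exactly such a $\tau=z^{\sum d_ix_i}$ and the retraction $\gamma:C^+\to C$, $x\mapsto\lim_{t\to 0}\tau(t)x$, in Lemma~\ref{lemma6.2.1}). But two of your key steps fail as stated. First, injectivity via ``$G\cdot C$ is dense'' is false: $C$ is an $L^3$-orbit of strictly smaller dimension than $C^+$, and $G\cdot C$ has positive codimension in general (already for $w_1=w_2=v=e$, $P=B$, the set $G\cdot C=G\cdot(\uo^-,\uo^-,\uo)$ is a closed, non-dense orbit, although the theorem holds there). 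Injectivity instead comes from the chain $\Ho(\X,\Li)^G\hookrightarrow\Ho(G\times^P\tilde C^+,\Li)^G\simeq\Ho(\tilde C^+,\Li)^P\to\Ho(C^+,\Li)^P\to\Ho(C,\Li)^L$, where the last map is injective because a $\tau$-invariant section on $C^+$ is determined by its limit values on $C$ --- not by any density of $G\cdot C$ in $\X$.

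Second, and more seriously, your surjectivity step hides the entire difficulty. Your claim that the curves $\bl_{\alpha,i}$ ``span the $\tau$-positive part of $T_{x_0}\mathcal{X}$ transverse to $C$'' is wrong: those finitely many curves correspond only to the codimension-one boundary strata of $\bar C^+$ indexed by \emph{simple} reflections, whereas the boundary of $C^+$ (equivalently, the complement of the good open locus in the incidence variety $\Chi=\{(gP/P,x):g^{-1}x\in\bar C^+\}$, which you never introduce but which is the vehicle for the extension) has many more components, indexed by arbitrary covers $w_i\to w_i'$ and $v'\to v$ in Bruhat order. The nonnegativity hypotheses on the $\bl_{\alpha,i}$ only let one extend across the simple-reflection walls (Lemma~\ref{lemma6.2.1}(b), via a local $\CC^*$-degree computation on Richardson varieties). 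To cross the remaining boundary one must show its image in (a finite-dimensional unipotent quotient of) $\X$ has codimension $\geq 2$ and invoke Zariski's main theorem (Proposition~\ref{prop:Zariski}); this is Lemma~\ref{newlemma13}, whose proof rests on the non-transversality statement Corollary~\ref{cor:nontransv} and the combinatorial Proposition~\ref{prop:freg} about the filtration $(\lg/\lp)_{\leq i}$. That is precisely where the hypothesis that $\epsilon_P^v$ occurs with coefficient $1$ in the \emph{deformed} product $\epsilon_P^{w_1}\odot_0\epsilon_P^{w_2}$ enters --- a hypothesis your proposal never uses, and without which the statement is false. Until the non-simple boundary strata and the role of the $\odot_0$-condition are addressed, the extension step is a genuine gap, not a technicality about ind-varieties.
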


To prove Theorem~\ref{th:codimface}, we have to produce line bundles~$\Li$ on~$\mathcal{X}$ having nonzero $G$\yh-invariant sections and
satisfying the equalities $(I^j_{(w_1,w_2,v)})$. 
To do this we start with a line bundle~$\Mi$ on~$\mathcal{X}$ whose
  restriction $\Mi\restrict{C}$ admits an $L$\yh-invariant section~$\sigma$.
Now, we want to extend $\sigma$ to a regular $G$\yh-invariant section on~$\mathcal{X}$. 
The first step is to extend $\sigma$ to a rational $G$\yh-invariant
section. 
Even though  this rational section can have poles, we are able to kill them by
adding an explicit   line bundle~$\Li'$ to~$\Mi$. 
An informed reader will notice  that the strategy is similar to the one used by the second author in \cite{GITEigen}. 
Nevertheless, there are numerous  difficulties because of infinite-dimensional phenomena. 
For example, we have  no abstract construction of line bundles arising from
divisors; the order of a pole along a divisor is not so easy to define (and
even if it is defined, such an order could be infinite) etc. 
In this paper, we overcome these difficulties by making various constructions more explicit which extend to our infinite-dimensional situation.

\subsubsection*{Acknowledgements} The first author is supported by NSF grants.
The second author is supported by the French ANR project ANR-15-CE40-0012.

\section{Zariski's main theorem}

We recall a  consequence of the Zariski's main theorem for our later use. 

\begin{PROP}\label{prop:Zariski}
  Let $f\,:\,Y\longto Z$~be a proper birational morphism between two
  quasiprojective irreducible varieties. 
We assume that we have an open subset~$\tilde Y$ of~$Y$ such that~$f(Y\backslash \tilde Y)$  has 
codimension at least two in~$Z$ and that~$Z$~is normal. Let $\Li$~be a line bundle over~$Z$.

\tetrecitwi{Then,  $f^*\,:\,\Ho(Z,\Li)\to  \Ho(Y,f^*(\Li))$ and the restriction
map~$r: \Ho(Y,f^*(\Li))\to  \Ho(\tilde Y,f^*(\Li))$} are both  isomorphisms. 
\end{PROP}

\begin{proof}
 To prove that~$f^*$~is an isomorphism, use the proof of Zariski's main theorem as in \cite[Chap. III, Corollary~11.4]{Hart}.

To prove that~$r$~is an isomorphism, consider the following commutative diagram:

\begin{center}
 \begin{tikzpicture}
\matrix (m) [matrix of math nodes,row sep=.8cm,column sep=.8cm] {
\Ho(Z, \Li)&& \Ho(Z\backslash f(Y\backslash \tilde{Y}), \Li)\\
\Ho(Y,f^* \Li)&& \Ho(Y\backslash f^{-1}(f(Y\backslash \tilde{Y})), f^*\Li)\\
&\Ho(\tilde{Y}, f^*\Li).\\
};
\path [->]     (m-1-1) edge node[above]     {$\beta$}  node[below,inner sep=3pt]     {$\sim$}        (m-1-3);
 \path [->]     (m-1-1) edge node[left]      {$f^*$}    node[right]      {$\wr$}    (m-2-1);
\path [->]     (m-1-3) edge node[right]      {$f^*$}     node[left,inner sep=2pt]      {$\wr$}    (m-2-3);
\path [->]     (m-2-1) edge node[below]     {$r_1$}        (m-2-3);
\path [->]     (m-2-1) edge node[above,inner sep=0.2pt]     {$\sim$}        (m-2-3);
\path [->]     (m-2-1) edge node[below]      {$r$}        (m-3-2);
\path [->]     (m-3-2) edge node[below]      {$r_2$}        (m-2-3);
\end{tikzpicture} 
\end{center}

   In the above diagram, $\beta$~is an isomorphism since $f(Y\backslash \tilde{Y})$~is of codimension~$\geq 2$ and $Z$~is normal. Thus, $r_1$~is an isomorphism. Further,  since $r_1$~is an isomorphism and $r$ and $r_2$ are injective, $r$~is an isomorphism as well. 
\end{proof}

\section{The span of the cone}

Before being interested in the faces of~$\Gamma(\lg)$, we describe the
span of it. 

 \begin{PROP} 
\label{prop:span}
The tensor cone $\Gamma(\lg)$ (which is, by definition, a rational cone) has nonempty interior in
  the following rational vector space~$$
E=E_\mathfrak{g}:=\{(\lambda_1,\lambda_2,\mu)\in (\lh_\QQ^*)^3\,:\,\lambda_1+\lambda_2-\mu\in\Span_\QQ(\Delta)\}.
$$
Observe that $E$~has dimension~$2\dim \lh + \sharp \Delta.$
\end{PROP}

\begin{proof}
If $(\lambda_1,\lambda_2,\mu)\in\Gamma(\lg)$ then
some integral multiple $N(\lambda_1+\lambda_2-\mu)$ belongs to the root lattice. 
Hence, 
\begin{equation}
  \label{eqn3.0}
\Gamma(\lg) \subset E.
 \end{equation}
Note that, for~$\lambda,\mu$ in~$P^+$, the point
\begin{equation}
  \label{eq:26}
  (\lambda,\mu,\lambda+\mu)\in \Gamma(\lg).
\end{equation}

We claim that for any simple root $\alpha_i\in \Delta$,
\begin{equation}
  \label{eq:22}
  (\rho,\rho,2\rho-\alpha_i)\in \Gamma(\lg),
\end{equation}
where $\rho\in \lh_{\mathbb{Z}}^*$~is any element satisfying $\rho(\alpha_i^\vee)=1$ for all the simple coroots $\alpha_i^\vee$. 
Indeed, fix a highest weight vector~$v_+$ in~$L(\rho)$ and a nonzero
$e_j$ (resp.\ $f_j$) in~$\lg_{\alpha_j}$ (resp.\ $\lg_{-\alpha_j}$) for any simple root $\alpha_j$ with $[e_j,f_j]=\alpha_j^\vee$, where $\lg_{\alpha}$ denotes the corresponding root space. 
Consider  the element in~$L(\rho)\otimes L(\rho)$:
$$
v=f_iv_+\otimes v_+-v_+\otimes f_i v_+.
$$
Clearly, $e_jv=0$ for any~$j\neq i$. Also,
$$
\begin{array}{ll}
  e_iv&=(e_if_i v_+)\otimes v_+-v_+\otimes(e_if_iv_+)\\
&=\alpha_i^\vee v_+\otimes v_+-v_+\otimes\alpha_i^\vee v_+\\
&=0.
\end{array}
$$
It follows that~$v$~is a highest weight vector. But its weight is
$2\rho-\alpha_i$, proving \eqref{eq:22}. Combined
with \eqref{eq:26}, we get 
\begin{equation}
  \label{eq:27}
  (0,0,\alpha_i)\in \langle \Gamma(\lg)\rangle, \qquad\forall \alpha_i\in\Delta,
\end{equation}
where $ \langle \Gamma(\lg)\rangle$~is the $\mathbb{Q}$\yh-span of~$ \Gamma(\lg)$ in~$ (\lh_\QQ^*)^3$ . 
Now, by \eqref{eq:26} and \eqref{eq:27}, $\Gamma(\lg)$ spans $E$.
\end{proof}

\section{On some translated Richardson varieties}

Fix a standard parabolic subgroup~$P$ of~$G$  with Levi subgroup~$L \supset T$, where $T$~is the (standard) maximal torus of~$G$ with Lie algebra $\mathfrak{h}$. For~$w\in W^P$, let~$$X^w_P:= \overline{B^-wP/P}\subset X_P\,\,\,\text{and}\,\,X_w^P:= \overline{BwP/P}\subset X_P$$
be respectively the {\it opposite Schubert variety} and the   {\it  Schubert variety} associated to~$w$.

\begin{prop}\label{prop:interSchub}
  Fix $(w, v)\in (W^P)^{2}$ and $\alpha\in\Delta$ such that
  \begin{enumerate}
  \item $v\geq w$;
  \item $\bar v:=s_\alpha v\leq v$;
  \item $\bar v\not\geq w$.
  \end{enumerate}

  Then, $X_v^w(P) :=X_v^P\cap X^w_P=s_\alpha X^P_{\bar v}\cap X^w_P$.
  
  In particular,   $\cX^P_{\bar v}\cap s_\alpha\cX^w_P$ is nonempty.
\end{prop}

\begin{proof}
  The inclusion $s_\alpha X^P_{\bar v}\cap X^w_P\subset X_v^w(P)$ is clear. 
  Moreover, $X_v^w(P)$ is an irreducible closed subvariety of $X_v^P$ of dimension
  $\ell(v)-\ell(w)$ (cf. \cite[Proposition~6.6]{Kumar:positivity} and use the surjectivity of $X_v^w(B)$ onto $X_v^w(P)$).   
   Since   $s_\alpha X^P_{\bar v}\cap X^w_P$ is closed
  in $X_v^P$, it is sufficient to prove that
  \begin{equation}
    \label{eq:dimXsvw}
    \dim(X^P_{\bar v}\cap s_\alpha X^w_P)=\ell(v)-\ell(w).
  \end{equation}
  Consider the incidence variety ${\mathcal Y}$ with projection on the second factor:
    $$
\pi\,:\,{\mathcal Y}:=\{(x,g\uo^-)\in X_v^P\times G/B^-\,|\, x\in gX^w_P\}\longto G/B^-.
$$
Set $\bar {\mathcal Y}:={\mathcal Y}\cap (X^P_{\bar v}\times G/B^-)$ and
$\bar \pi$ the restriction of $\pi$ to $\bar {\mathcal Y}$.\\
Observe first that $ {\mathcal Y}$ and $\bar {\mathcal Y}$ are
respectively $P_\alpha$-stable and $B$-stable closed subsets of $X_v^P\times G/B^-$ and 
$X^P_{\bar v}\times G/B^-$ respectively 
and that $\pi$ and
$\bar\pi$ are equivariant, where $P_\alpha$ is the minimal parabolic subgroup of $G$ containing $B$ and $s_\alpha$.
Moreover, $G\times_B  {\mathcal Y}\simeq
G\times_P(\overline{Pv\inv\uo}\times \overline{Pw\inv\uo^-}))$ is
ind-irreducible (i.e., admits a filtration by finite-dimensional irreducible closed subsets). Hence ${\mathcal Y}$ is ind-irreducible. Similarly, $\bar
{\mathcal Y}$ is ind-irreducible.
Since $X_v^P$ is projective, $\pi$ is proper. Similarly, $\bar \pi$ is proper. Hence
their images are closed.

Since $v\geq w$, $\uo^-\in\Image\,\pi$. Hence, $\pi$ is surjective.

Since $\bar v\not\geq w$, $\uo^-\not\in\Image\,\bar\pi$. Hence, $\Image(\bar
\pi)\subset \bigcup_{\beta\in\Delta}\overline{Bs_\beta\uo^-}$.
But, $(\bar vP/P,s_\alpha\uo^-)\in \bar {\mathcal Y}$. Indeed $vP/P\in
X^w_P$ and $s_\alpha vP/P=\bar vP/P\in s_\alpha X^w_P$.
Hence, $\Image(\bar\pi)$ contains $Bs_\alpha\uo^-$ by $B$-equivariance.
Since $\Image(\bar\pi)$ is closed and irreducible, we get
\begin{equation}\label{image}
\Image(\bar\pi)=\overline{Bs_\alpha\uo^-}.
\end{equation}
We now restrict ${\mathcal Y}$ over $P_\alpha\uo^-$. Consider the
action map
$$
\rho\,:\,P_\alpha\longto \Aut(X_v^P).
$$
The image $P_v$ of $\rho$ is a finite dimensional connected algebraic group
of semi-simple rank one. 
Consider
$$
\Yc^\circ_{\rm red}:=\{(x,p)\in X_v^P\times P_v\,|\, p\inv x\in X^w_P\}
$$
with its two projections $p_1$ and $p_2$ on $X_v^P$ and $P_v$ respectively. Moreover, $p_2$ is 
$P_v$-equivariant and hence surjective. Further, $\Yc^\circ_{\rm red}$ is irreducible since so is $p_2\inv(e)$ and $P_v$. 

Note that $p_2\inv(e)\simeq X_v^w(P)$ is of dimension $\ell(v)-\ell(w)$.
Hence,
\begin{equation}
  \label{eq:dimY0}
  \dim(\Yc^\circ_{\rm red})=\dim(P_v)+\ell(v)-\ell(w).
\end{equation}
We already observed that $(vP/P,e)\in \Yc^\circ_{\rm red}$.
Since $P_v\cdot v$ is dense in $X_v^P$ and $p_1$ is $P_v$-equivariant, we
conclude that $p_1$ is dominant.
Then, still using the $P_v$-equivariance, we get
\begin{equation}
  \label{eq:dimfiberp1}
  \dim(p_1\inv(vP/P))=\dim(\Yc^\circ_{\rm red})-\ell(v)=  
  \dim(P_v)-\ell(w),\,\text{by equation \eqref{eq:dimY0}} 
\end{equation}
Set
$$
{\bar \Yc}^\circ_{\rm red}=\Yc^\circ_{\rm red}\cap (X^P_{\bar v}\times P_v).
$$
As observed above,  ${\bar \Yc}\neq \Yc$. Similarly ${\bar \Yc}^\circ_{\rm red}\neq
\Yc^\circ_{\rm red}$.
Since $X^P_{\bar v}\times P_v$ is a hypersurface in $X_{ v}^P\times P_v$,
we deduce that
\begin{equation}
  \label{eq:dimXred}
  \dim({\bar \Yc}^\circ_{\rm red})=\dim(\Yc^\circ_{\rm red})-1=\dim(P_v)+\ell(v)-\ell(w)-1,\,\text{by equation \eqref{eq:dimY0}}.
\end{equation}
Write now $P_\alpha=BU_{-\alpha}(\CC)\sqcup Bs_\alpha$ and
$P_v=\rho(B)U_{-\alpha}(\CC)\sqcup \rho(B)s_\alpha$, where $U_{-\alpha}(\CC)$ is the root subgroup of $G$ 
corresponding to the negative root $-\alpha$.
Let $\bar p_2$ denote the restriction of $p_2$ to ${\bar
  \Yc}^\circ_{\rm red}$, which is $\rho(B)$-equivariant.
The discussion about the image of $\bar \pi$ at the beginning of the
proof implies that $\Image(\bar p_2)=\rho(B)s_\alpha$.
Hence,
\begin{equation}
  \label{eq:dimexpected}
  \dim({\bar p_2)}\inv(s_\alpha)=\dim(X^P_{\bar v}\cap s_\alpha X^w_P)=\dim({\bar \Yc}^\circ_{\rm red})-\dim(\rho(B)s_\alpha).
\end{equation}
The expected equality~\eqref{eq:dimXsvw} follows from
\eqref{eq:dimexpected}, \eqref{eq:dimXred} and the identity $\dim(\rho(B)s_\alpha)=\dim(P_v)-1$.

To prove the `In particular' statement of the proposition, observe that 
$$\cX^P_{\bar v}\cap s_\alpha\cX^w_P= (X^P_{\bar v}\cap s_\alpha\cX^w_P)\cap (\cX^P_{\bar v}\cap s_\alpha X^w_P)
$$
and the last two open subsets are nonempty in the irreducible variety $X^P_{\bar v}\cap s_\alpha X^w_P$
(by the first part of the lemma).
\end{proof}
\begin{coro}\label{prop:interSchub2}
Fix $(w, v)\in (W^P)^{2}$ and $\alpha\in\Delta$ such that
  \begin{enumerate}
  \item $v\geq w$;
  \item $\bar w:=s_\alpha w\geq w\in W^P$;
  \item $v\not\geq \bar w$.
  \end{enumerate}

  Then, $X_v^w(P)=X_{v}^P\cap s_\alpha X^{\bar w}_P$.
\end{coro}
\begin{proof} Set $\bar v=s_\alpha v$.
  Then, by [12, Lemma 1.3.18 and Corollary 1.3.19], 
   $\bar v\in W^P$;
   $\bar v\geq \bar w$;
  $v< \bar v$; and
  $v\not\geq\bar w$.
  Hence, we can apply Proposition~\ref{prop:interSchub} to the pair
 $(\bar w,\bar v)$ to get
 \begin{equation}
   \label{eq:2}
   X^P_{\bar v}\cap X^{\bar w}_P=s_\alpha X_{v}^P\cap  X^{\bar w}_P.
 \end{equation}

 In particular, we have
 $$
 \begin{array}{rcl}
   \dim(X_v^w(P))&=&\ell(v)-\ell(w)\\
              &=&\ell(\bar v)-\ell(\bar w)\\
              &=&\dim(X_{\bar v}^{\bar w}(P))\\
              &=&\dim(s_\alpha X_{v}^P\cap  X^{\bar w}_P)\\
    &=&\dim(X_{v}^P\cap  s_\alpha X^{\bar w}_P).\\
 \end{array}
 $$
 Moreover, by [12, Theorem 5.1.3(d)], $s_\alpha X^{\bar w}_P\subset X^w_P$. Hence,
$X_{v}^P\cap s_\alpha X^{\bar w}_P\subset X_v^w(P)$.
 This proves the corollary. 
\end{proof}

\section{Construction of line bundles}\label{section5}

Consider a subvariety~$Z\subset {\mathcal X}$. 
If $G$ and so ${\mathcal X}$~is finite-dimensional, $Z$ can be
realized as the zero set of a section of some  line bundle on~${\mathcal X}$ 
if and only if $Z$~has codimension one. 
If $G$~is not finite-dimensional, then ${\mathcal X}$~is only an ind-variety and the
codimension is not so easy to define. 
Moreover, even if there exists a filtration ${\mathcal X}=\bigcup_n
{\mathcal X}_n$ by finite-dimensional closed subvarieties such that~$Z\cap {\mathcal X}_n$~has codimension one in~${\mathcal X}_n$, $Z$~is
not necessarily the zero locus of a section of some line bundle on~${\mathcal X}$. 

Nevertheless, if $Z=F_{\alpha,i}$ (resp. $Z=E_{\bar w_1, \bar w_2,
  \bar v}$)    as defined by
Formula~\eqref{eq:defF} (resp. \ \eqref{eq:defEbar}) below, we prove in this
section that~$Z$~is the zero locus of a section of some line bundle.

\subsection{First divisors}\label{section4.1}

Fix once and  for all  fundamental weights~$\varpi_{\alpha_0},\dots,
\varpi_{\alpha_l}$ in~$\lh_\ZZ^*$ such that~$\langle
\varpi_{\alpha_i},\alpha_j^\vee\rangle=\delta_i^j$.

Let $M$~be a $\lg$\yh-module such that, under the action of~$\lh$, $M$
decomposes as~$\bigoplus_{\mu\in\lh^*} M_\mu$ with finite-dimensional weight spaces~$M_\mu$. Set~$M^\vee=\bigoplus_\mu M_\mu^*$: it is a $\lg$\yh-submodule of
the full dual space~$M^*$. 

Recall that~${\mathcal X}=(G/B^-)^2\times G/B$ and
$\uo^\pm=B^\pm/B^\pm$. 
Consider, for~$\alpha\in \Delta$ and $i=1,2$, 
\begin{equation}
  \label{eq:defF}
F_{\alpha,i}=\{(x_1,x_2,g\uo)\in \mathcal{X}\,:\,g\inv x_i\in
\overline{Bs_\alpha \uo^-}\}
\end{equation}
with the reduced ind-scheme structure. It is easy to see that~$F_{\alpha,i}$~is ind-irreducible.
Let~$p_1,p_2$ and $p_3$ denote the projections from~$\mathcal{X}$ to the
corresponding factor. 
Set, for~$i=1,2$ and $\alpha\in \Delta$, 
$$
\Mi_{\alpha,i}=p_i^*(\Li^-_{\varpi_\alpha})\otimes p_3^*(\Li_{\varpi_\alpha}).
$$

\begin{lemma}
  \label{lem:linebundlebord}
The space~$\Ho(\mathcal{X}, \Mi_{\alpha,i})$ contains a unique (up to scalar
multiples)  nonzero $G$\yh-invariant section~$\sigma = \sigma_{\alpha, i}$. Moreover,
scheme-theoretically, 
$$
F_{\alpha,i}=\{x\in \mathcal{X}\,:\,\sigma(x)=0\}.
$$ 
\end{lemma}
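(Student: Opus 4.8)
The plan is to identify the $G$-invariant section $\sigma_{\alpha,i}$ concretely using the Borel--Weil realization of sections as functions on the group, and then to compute its zero scheme by reducing to the $\SL_2$ (or rank-one parabolic) situation. I will treat $i=1$ for notational simplicity; the case $i=2$ is identical.

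First I would describe $\Ho(\mathcal{X},\Mi_{\alpha,1})$ explicitly. By the Borel--Weil theorem for Kac-Moody groups (cf.\ \cite[Corollary 8.3.12]{Kumar:KacMoody}), $\Ho(G/B^-,\Li^-(\varpi_\alpha)) \cong L(\varpi_\alpha)^\vee$ as a $G$-module (the graded dual of the highest weight module), while $\Ho(G/B,\Li(\varpi_\alpha)) \cong L(\varpi_\alpha)$. Hence by the Künneth-type statement for these line bundles on a product of flag ind-varieties, $\Ho(\mathcal{X},\Mi_{\alpha,1}) \cong L(\varpi_\alpha)^\vee \otimes \Ho(G/B^-,\mathcal{O}) \otimes L(\varpi_\alpha) = L(\varpi_\alpha)^\vee \otimes L(\varpi_\alpha)$, since $\Ho(G/B^-,\mathcal{O})=\CC$. (Here the second factor $p_2^*$ contributes nothing because $\Mi_{\alpha,1}$ is pulled back only via $p_1$ and $p_3$.) The space of $G$-invariants in $L(\varpi_\alpha)^\vee\otimes L(\varpi_\alpha)$ is one-dimensional, spanned by the canonical element (the "identity" in $\End$), which gives existence and uniqueness of $\sigma=\sigma_{\alpha,1}$; concretely $\sigma$ is the section whose value at $(gB^-, x_2, hB)$ is the matrix coefficient $\langle h\inv g v_{\varpi_\alpha}^\vee\,,\, \text{(appropriate vector)}\rangle$, i.e.\ up to the trivialization it is the function $(g,h)\mapsto \langle v_{-\varpi_\alpha}^{low}\,,\, g\inv h\, v_{\varpi_\alpha}\rangle$ pairing the lowest weight line of $L(\varpi_\alpha)^\vee$ against $L(\varpi_\alpha)$; I will make the weights match so that this descends to a section of $\Mi_{\alpha,1}$.

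Next I would compute the zero scheme. A point $(x_1,x_2,g\uo)$ lies in the vanishing locus iff the matrix coefficient above vanishes. Writing $x_1=g\uo^-$ after translating by $g$ (using $G$-invariance of $\sigma$ and the fact that $F_{\alpha,1}$ depends only on $g\inv x_1$), one reduces to: $\sigma$ vanishes at $(x_1', x_2, \uo)$ iff $x_1' \in \overline{Bs_\alpha\uo^-}$. The section restricted to the first factor $G/B^-$ with the other coordinates fixed at the base points is, up to scalar, the canonical section of $\Li^-(\varpi_\alpha)$ whose zero divisor is the "opposite big cell complement" — but since $\varpi_\alpha$ is the $\alpha$-th fundamental weight, pairing against the extreme weight vector of $L(\varpi_\alpha)^\vee$ that is killed by all $\lg_{-\alpha_j}$, $j\neq$ (index of $\alpha$), this matrix coefficient function on $G/B^-$ has reduced zero divisor exactly $\overline{Bs_\alpha\uo^-}$, the unique $B$-stable Schubert divisor "in direction $\alpha$". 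This is exactly the classical computation (via the rank-one parabolic $P_\alpha^-$ and the $\SL_2$-matrix coefficient $\begin{psmallmatrix}a&b\\c&d\end{psmallmatrix}\mapsto c$ whose vanishing defines the opposite Schubert point): restricting to each finite-dimensional Schubert subvariety $X_w^-$ gives a nonzero section vanishing to order exactly one along the divisor, so the scheme-theoretic zero locus is reduced and equals $F_{\alpha,1}$.

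The main obstacle is the finite-dimensionality bookkeeping: I must check that these matrix-coefficient functions genuinely glue to a \emph{regular} section of the line bundle on the ind-variety $\mathcal{X}$ (not merely on each finite piece), that the Künneth identification of $\Ho(\mathcal{X},\Mi_{\alpha,1})$ is valid in the ind-scheme setting, and — most delicately — that the zero scheme is reduced, i.e.\ that $\sigma$ vanishes to order exactly $1$ (and not higher, nor does it vanish identically) on each finite-dimensional approximation. For the order-one claim I would intersect with the $T$-stable curve $\bl_{\alpha,1}$ (or rather the curve through $\uo^-$ and $s_\alpha\uo^-$ in the first factor): the restriction of $\Mi_{\alpha,1}$ to this $\PP^1$ has degree $\langle\varpi_\alpha,\alpha^\vee\rangle=1$, so any nonzero section has a single simple zero there, forcing the divisor to be reduced and transverse along $F_{\alpha,1}$. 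Combining this transversality on curves with ind-irreducibility of $F_{\alpha,1}$ (already noted) yields the scheme-theoretic equality; the remaining verification that $F_{\alpha,1}$ is exactly the set (not just contained in it) follows because outside $F_{\alpha,1}$ one has $g\inv x_1$ in the open $B$-orbit direction, where the matrix coefficient is a unit.
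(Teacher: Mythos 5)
Your proposal is correct and follows essentially the same route as the paper: the Borel--Weil identification of the one-dimensional space of invariants, the explicit highest-weight matrix coefficient $g\mapsto v_+^*(gv_+)$ as the section, the criterion $w\varpi_\alpha\neq\varpi_\alpha\Leftrightarrow w\geq s_\alpha$ (plus the Birkhoff decomposition) for the set-theoretic zero locus, and a degree-one argument for reducedness on each finite-dimensional Schubert piece. The only difference is cosmetic and concerns the last step: the paper deduces $d=1$ from primitivity of $\epsilon^{s_\alpha}$ in $\Hd^2(X_v,\ZZ)$, which is the dual formulation of your pairing of the divisor class with the degree-one curve class $[X_{s_\alpha}]$ --- and is the safer phrasing, since your curve meets $X^{s_\alpha}\cap X_v$ only at the point $s_\alpha\uo$, which for $v\neq s_\alpha$ is not a generic point of that (possibly singular) divisor, so one should argue via the global intersection number $\deg(\Li_{\varpi_\alpha}|_{X_{s_\alpha}})=d\cdot\langle[X^{s_\alpha}\cap X_v],[X_{s_\alpha}]\rangle$ rather than a local ``simple zero implies multiplicity one''.
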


\begin{proof}
Our construction of~$\Mi_{\alpha,i}$ and $\sigma_{\alpha, i}$~is completely explicit.

By the analogue of the Borel-Weil theorem for Kac-Moody groups (cf. \cite[Corollary~8.3.12]{Kumar:KacMoody}), we have (cf. \cite[Proof of Theorem~3.2]{BrownKumar}):
\begin{equation}\label{eqn3.1.1}
\Ho(\mathcal{X}, \Mi_{\alpha,i})\simeq \Hom_\CC(L({\varpi_\alpha})^\vee\otimes L({\varpi_\alpha}),\CC).
\end{equation}
Observe that  
\begin{equation}\label{eqn3.1.2}
 \Hom_\CC(L({\varpi_\alpha})^\vee\otimes L({\varpi_\alpha}),\CC)\simeq \Hom_\CC(L({\varpi_\alpha})^\vee,  L({\varpi_\alpha})^*),
\end{equation}
since  $\Hom_\CC(V\otimes W, \CC)\simeq  \Hom_\CC(V, W^*)$ for any~$\CC$\yh-vector spaces~$V$ and $W$. From the Equations~\eqref{eqn3.1.1} and 
 \eqref{eqn3.1.2} it is easy to see that~$\Ho(\mathcal{X}, \Mi_{\alpha,i})^G$~is one-dimensional spanned by the inclusion of~$L({\varpi_\alpha})^\vee$ in~$L({\varpi_\alpha})^*$ under the identifications  \eqref{eqn3.1.1} and 
 \eqref{eqn3.1.2}. 
We now identify  the zero locus of nonzero $\sigma \in \Ho(\mathcal{X}, \Mi_{\alpha,i})^G$:

Consider the isomorphism 
$$\psi: G \times ^{B^-}\, G/B \simeq G/B^-\times G/B, \,\,\,[g,h\uo] \mapsto (g\uo^-, gh\uo), \,\,\text{for} \,\,g,h\in G,
$$
where $[g,h\uo] $ denotes the $B^-$\yh-orbit of~$(g,h\uo)$. Consider the $B^-$\yh-equivariant line bundle ${\CC_{\varpi_\alpha}\otimes \Li_{\varpi_\alpha}}$ over~$G/B$, where $\CC_{\varpi_\alpha}$ denotes the trivial line bundle over~$G/B$ with the $B^-$\yh-action given by the character $e^{\varpi_\alpha}$. It is easy to see that 
\begin{equation}\label{eqn3.1.3}
 \psi^*(\Li^-_{\varpi_\alpha}\boxtimes \Li_{\varpi_\alpha}) = G \times ^{B^-}\,(\CC_{\varpi_\alpha}\otimes \Li_{\varpi_\alpha}).    
\end{equation}
Let $v_-$~be  a fixed nonzero vector of~$\CC_{-\varpi_\alpha}$. Consider the section~$\sigma_o$ of~$\Li_{\varpi_\alpha}$ over~$G/B$ given by
\begin{equation}\label{eqn3.1.4}
 \sigma_o (g\uo) = [g, v_+^*(gv_+)v_-], \,\,\,\text{for}\,\, g\in G,
\end{equation}
where $v_+$~is a nonzero highest weight vector of~$L(\varpi_\alpha)$ and $v_+^* \in L(\varpi_\alpha)^*$~is given by~$$v_+^* (v_+)=1\,\text{and}\, v_+^*(v)=0,\,\text{for any weight vector~$v$  of~$L(\varpi_\alpha)$ of weight~$\neq \varpi_\alpha$}.$$
By the definition of~$\sigma_o$, it is a character of~$B^-$ of weight~$\yh- \varpi_\alpha$ and hence $1\otimes \sigma_o$ thought of as a section of~$\CC_{\varpi_\alpha}\otimes \Li_{\varpi_\alpha}$~is $B^-$\yh-invariant. Thus, it canonically gives rise to a $G$\yh-invariant section~$\hat{\sigma}_o$ of~$G \times ^{B^-}\,(\CC_{\varpi_\alpha}\otimes \Li_{\varpi_\alpha}).$

We next claim that the zero set~$Z(\sigma_o)$ of~$\sigma_o$~is given by 
\begin{equation}\label{eqn3.1.5}
 Z(\sigma_o)=\overline{B^-s_\alpha \uo}\subset G/B.
\end{equation}
By the definition of~$\sigma_o$, $ Z(\sigma_o)$~is left $B^-$\yh-stable (since $v_+^*\in  L(\varpi_\alpha)^*$~is an eigenvector for the action of~$B^-$).
Take $w\in W$. Then,
\begin{align*} w\uo\in Z(\sigma_o) &\Leftrightarrow v_+^*(wv_+)=0\\
 &\Leftrightarrow w\varpi_\alpha \neq \varpi_\alpha\\
 &\Leftrightarrow w\notin \langle s_\beta\rangle_{\beta \in \Delta \backslash \{\alpha\}},\,\,\text{by \cite[Proposition~1.4.2 (a)]{Kumar:KacMoody}}\\
 &\Leftrightarrow w\geq s_\alpha,
\end{align*}
where $\langle s_\beta\rangle \subset W$ denotes the subgroup
generated by the elements $s_\beta$. This proves the Equation~\eqref{eqn3.1.5} by the Birkhoff decomposition \cite[Theorem~6.2.8]{Kumar:KacMoody}. Thus, the zero set~$Z(\hat{\sigma}_o)$ of~$\hat{\sigma}_o$~is given by:
$$Z(\hat{\sigma}_o) = G\times^{B^-}\,\left(\overline{B^-s_\alpha \uo}\right).$$
Moreover,
$$\psi \left(G\times^{B^-}\,\left(\overline{B^-s_\alpha \uo}\right)\right)=\{(x, g\uo) \in G/B^-\times G/B: g^{-1}x\in \overline{Bs_\alpha \uo^-}\}.$$
From this we obtain that~$Z(\sigma)=F_{\alpha,i}$ set theoretically.

To prove that~$Z(\sigma)=F_{\alpha,i}$ scheme-theoretically, it suffices to show that~$Z(\sigma_o)$ (which is set theoretically $X^{s_\alpha}= \overline{B^-s_\alpha \uo}\subset G/B$) is reduced. 

For any~$v\in W$, consider  $Z(\sigma_o)\cap X_v = X^{s_\alpha}\cap X_v$, which is an irreducible subset of codimension one in~$X_v$. The Chern class of the line bundle~$ {\Li_{\varpi_\alpha}}\restrict{X_v}$~is the Schubert class ${\eps^{s_\alpha}\in \Hd^2(X_v, \ZZ)}$. If  $Z(\sigma_o)\cap X_v$ were not reduced, say
$$ Z(\sigma_o)\cap X_v = d (X^{s_\alpha}\cap X_v)\,\,\,\text{(scheme-theoretically) for some~$d>1$},$$
then $\frac{1}{d} \eps^{s_\alpha}\in \Hd^2(X_v, \ZZ)$, which is a contradiction. Hence $d=1$, proving that~$ Z(\sigma_o)\cap X_v $~is reduced for any~$v\in W$. Thus, $ Z(\sigma_o)$~is reduced, proving the lemma.
\end{proof}

\subsection{Subvarieties of~$\mathcal{X}$ from Schubert varieties}

Fix a standard parabolic subgroup~$P$ of~$G$  with Levi subgroup~$L \supset T$, where $T$~is the (standard) maximal torus of~$G$ with Lie algebra $\mathfrak{h}$.
For any triple $(w_1, w_2, v)\in (W^P)^3$, set~$$
{\bar C}^+_{w_1,w_2,v}=\overline{Pw_1\inv \uo^-}\times
\overline{Pw_2\inv \uo^-}\times \overline{Pv\inv \uo}\subset \mathcal{X},
$$
and 
\begin{equation}
  \label{eq:defE}
E_{w_1,w_2,v}=G. {\bar C}^+_{w_1,w_2,v}\subset \mathcal{X}\,\,\,\text{under the diagonal action of~$G$}.
 \end{equation}

\begin{lemma}\label{Eisclosed} For any triple $(w_1, w_2, v)\in (W^P)^3$,
the set~$E_{w_1,w_2,v}$~is closed and ind-irreducible in~${\mathcal X}$.  
\end{lemma}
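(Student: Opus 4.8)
The plan is to prove Lemma~\ref{Eisclosed} by analyzing the orbit map
$$
\pi\,:\, G\times^P \bar C^+_{w_1,w_2,v}\longto \mathcal{X},\qquad [g,x]\mapsto g\cdot x,
$$
whose image is exactly $E_{w_1,w_2,v}$. The source here is a $G$-ind-variety fibered over $G/P$ with fiber the closed ind-subvariety $\bar C^+_{w_1,w_2,v}$ of $\mathcal{X}$, and the point is that $G\times^P \bar C^+_{w_1,w_2,v}$ is ind-projective: $G/P$ is ind-projective, and $\bar C^+_{w_1,w_2,v}$, being a product of (finite-dimensional, projective) Schubert varieties in the three flag factors of $\mathcal{X}$, is a projective fiber. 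Hence $\pi$ is a proper morphism of ind-varieties. Since $\mathcal{X}$ is an ind-variety (in particular each $\mathcal{X}_n$ is separated), the image of a proper morphism is closed; thus $E_{w_1,w_2,v}$ is closed. Concretely, one fixes the standard filtration $\mathcal{X}=\bigcup_n\mathcal{X}_n$ by finite-dimensional $G$-stable... well, $B$-stable closed subvarieties, notes that $\bar C^+_{w_1,w_2,v}\subset \mathcal{X}_{n_0}$ for some $n_0$, and that for each $n$ the preimage $\pi^{-1}(\mathcal{X}_n)$ lies in $G_m\times^P\bar C^+_{w_1,w_2,v}$ for suitable $m$, which is a genuine finite-dimensional projective variety mapping properly (because projective) to $\mathcal{X}_n$; so $E_{w_1,w_2,v}\cap\mathcal{X}_n$ is closed in $\mathcal{X}_n$ for every $n$, giving closedness in the ind-topology.

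For ind-irreducibility, the key is that $G\times^P\bar C^+_{w_1,w_2,v}$ is ind-irreducible: it is fibered over the ind-irreducible base $G/P$ (each $G_m/P$ is irreducible, being the closure of a single Bruhat cell's sweep, or more simply because $G_m$ is irreducible and $P$ acts so that $G_m\cdot(\text{base point})$ is irreducible) with irreducible fiber $\bar C^+_{w_1,w_2,v}$ (a product of Schubert varieties, each irreducible). A fiber bundle with irreducible base and irreducible fiber is irreducible, and exhausting by the finite-dimensional pieces gives ind-irreducibility of the source. The continuous image of an ind-irreducible set is ind-irreducible, so $E_{w_1,w_2,v}=\pi(G\times^P\bar C^+_{w_1,w_2,v})$ is ind-irreducible. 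Equivalently, one can argue directly: $E_{w_1,w_2,v}=\overline{G\cdot \bar C^+_{w_1,w_2,v}}$ is the closure of the sweep of the irreducible set $\bar C^+_{w_1,w_2,v}$ under the (ind-)connected group $G$, hence irreducible on each finite level.

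The main obstacle I expect is the bookkeeping with the ind-structure: making precise that $\pi$ restricted to appropriate finite-dimensional pieces is a proper (indeed projective) morphism, i.e., choosing the filtrations on source and target compatibly. This requires: (i) identifying a filtration of $G$ by finite-dimensional subvarieties $G_m$ (e.g., finite unions of Birkhoff/Bruhat double cosets, or the images of products of finitely many $\mathrm{SL}_2$'s) such that $G_m\cdot \mathcal{X}_{n}\subset \mathcal{X}_{N(m,n)}$; (ii) checking that $G_m\times^P\bar C^+_{w_1,w_2,v}$ is a finite-dimensional projective variety — this follows because $\bar C^+_{w_1,w_2,v}$ is projective and the associated bundle over the projective variety $G_m/(G_m\cap P)$ (or a suitable Schubert-variety approximation of $G/P$) with projective fiber is projective; and (iii) that $\bar C^+_{w_1,w_2,v}$ genuinely sits in some $\mathcal{X}_{n_0}$ so the fiber is bounded. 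None of these is deep, but all must be stated carefully since, as the authors themselves emphasize in the introduction, codimension and related finite-dimensional intuitions do not transfer naively to the ind-setting. Once the properness of the orbit map is in hand, closedness and ind-irreducibility of $E_{w_1,w_2,v}$ both follow formally.
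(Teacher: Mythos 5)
Your architecture for the irreducibility half is fine and matches the paper's (both deduce ind-irreducibility of $E_{w_1,w_2,v}$ from that of $G$ and of $\bar C^+_{w_1,w_2,v}$). The closedness half, however, rests on a false premise: $\bar C^+_{w_1,w_2,v}=\overline{Pw_1^{-1}\uo^-}\times\overline{Pw_2^{-1}\uo^-}\times\overline{Pv^{-1}\uo}$ is \emph{not} a finite-dimensional projective variety. The first two factors are closures of $P$-orbits in $G/B^-$, which coincide with $\overline{Bw_i^{-1}\uo^-}$ and are therefore finite-\emph{codimensional}, infinite-dimensional Birkhoff-type strata; the third factor is $\cup_{v_n\in W_P}X_{v_nv^{-1}}$, an infinite increasing union of Schubert varieties whenever $W_P$ is infinite. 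Consequently $\bar C^+$ does not sit inside any $\mathcal{X}_{n_0}$, your step (iii) fails, $G_m\times^P\bar C^+$ is not a finite-dimensional projective variety, and the asserted properness of the orbit map does not follow from your reasoning. Mere ind-projectivity of the source is not enough: the image of an ind-projective ind-variety is only an increasing union of closed sets, which is exactly the failure mode one must rule out for $G$-sweeps such as $E_{w_1,w_2,v}=G\cdot\bar C^+$.

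The compactness that actually saves the argument is the finite-dimensionality and projectivity of the \emph{single} Schubert variety $X_v^P\subset G/P$, and it enters through the incidence variety rather than through $\bar C^+$. The paper first rewrites $E_{w_1,w_2,v}$ as the locus where $g_1X_P^{w_1}\cap g_2X_P^{w_2}\cap g_3X_v^P\neq\emptyset$, slices via $G\times_B(G/B^-)^2\simeq\mathcal{X}$ so that the third coordinate is $\uo$, and then projects from the closed subset of $X_v^P\times(G/B^-)^2$ consisting of tuples $(y,g_1\uo^-,g_2\uo^-)$ with $y\in g_1X_P^{w_1}\cap g_2X_P^{w_2}\cap X_v^P$; over each finite-dimensional piece of the target this is a closed subset of a product (projective)$\,\times\,$(finite-dimensional), so its image is closed. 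Your object $G\times^P\bar C^+$ is essentially this incidence variety $\Chi\subset G/P\times\mathcal{X}$, and its fibers over $\mathcal{X}$ are closed subsets of translates of $X_v^P$ --- that, not projectivity of the fiber $\bar C^+$, is the correct source of properness. Replacing your steps (ii)--(iii) with this observation repairs the proof.
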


\begin{proof}
Since $G$ and ${\bar C}^+_{w_1,w_2,v}$ are ind-irreducible (see
\cite[before Lemma~3]{R:KM1} and the argument in the proof of Lemma~5.4), so
is $E_{w_1,w_2,v}$. 
Note that 
\begin{equation}\label{neweqn2.1}
E_{w_1,w_2,v}=\{(g_1\uo^-,g_2\uo^-,g_3\uo)\in{\mathcal X}\,:\, g_1X^{w_1}_P\cap g_2X^{w_2}_P\cap g_3X_{v}^P\neq\emptyset\}. 
\end{equation}
Observe that $E_{w_1,w_2,v} =\mathcal{X}$ if $\varepsilon_P^v$ occurs in $\varepsilon_P^{w_1}\cdot \varepsilon_P^{w_2}$ with nonzero coefficient (cf. \cite[Porposition~3.5]{BrownKumar}).

  By the following isomorphism
$$
G\times_B(G/B^-)^2\longto {\mathcal X},\,[g, x]\mapsto (gx,gB/B),
$$ 
it is sufficient to prove that~$$
\tilde E=\{(g_1\uo^-,g_2\uo^-)\,:\, g_1X^{w_1}_P\cap g_2X^{w_2}_P\cap X_{v}^P\neq\emptyset\}
$$
is closed in~$\mathcal{X}_\bs:= (G/B^-)^2\simeq  (G/B^-)^2\times \uo$.
Consider 
$$
\pi_\bs\,:\,\Chi_\bs\longto \mathcal{X}_\bs,
$$
where $$\Chi_\bs:= \{(y, g_1\uo^-, g_2\uo^-, \uo)\in G/P\times \mathcal{X}: y\in  g_1X^{w_1}_P\cap g_2X^{w_2}_P\cap X_{v}^P\}$$
and $\pi_\bs$~is the projection to the last three factors. 
Note that~$\tilde E$~is the image of~$\Chi_\bs$ and $\Chi_\bs$~is closed in $X_v^P\times\mathcal{X}_s$. 
Consider a filtration ${\mathcal X}_\bs=\bigcup_n {\mathcal X}_\bs^n$ by
closed finite-dimensional subvarieties. 
Then, $\pi_\bs\inv({\mathcal X}_\bs^n)$~is closed in~$X_v^P\times {\mathcal
  X}_s^n$. Since $X_v^P$~is projective, it follows that~$\pi_\bs(\pi_\bs\inv({\mathcal X}_\bs^n))$~is closed in~${\mathcal X}_\bs^n$.
This concludes the proof since $\pi_\bs(\pi_\bs\inv({\mathcal
  X}_\bs^n))=\tilde E\cap {\mathcal X}_\bs^n$.
\end{proof}

For~$w\in W^P$, we set~$\cX^w_P=B^-wP/P$ and $\cX_w^P=BwP/P$.
Consider,  for any triple $(w_1, w_2, v)\in (W^P)^3$,
\begin{align}\label{eqn4.1}
  \Chi&:=\{(gP/P,x)\in G/P\times \mathcal{X}\,:\, g\inv x\in \bar C^+\}\notag\\ 
&=\{(y,g_1\uo^-,g_2\uo^-,g_3\uo)\in G/P\times \mathcal{X}\,:\,y\in
  g_1X_P^{w_1}\cap g_2X^{w_2}_P\cap g_3X_{v}^P\}
\end{align}
and 
$$
  \cChi :=\{(y,g_1\uo^-,g_2\uo^-,g_3\uo)\in G/P\times \mathcal{X}\,:\,y\in
  g_1\cX_P^{w_1}\cap g_2\cX^{w_2}_P\cap g_3\cX_{v}^P\},$$
where $ \bar{C}^+ = \bar C^+_{w_1, w_2, v}$. Observe that~$\Chi$~is closed in~$G/P\times \mathcal{X}$ and it  is  ind-irreducible since $\Chi = G\cdot (P/P,  \bar C^+).$

Consider also the set~$\cChi^+$ of points~$(y,g_1\uo^-,g_2\uo^-,g_3\uo)\in\cChi$ such that the linear
  map~$$
\textstyle\Tau_y (g_3\cX_v^P) \longto\frac{\Tau_y (G/P)}{\Tau_y (g_1\cX^{w_1}_P)}\oplus\frac{\Tau_y (G/P)}{\Tau_y (g_2\cX^{w_2}_P)}
$$ 
is injective, i.e.,
$$\Tau_y (g_1\cX^{w_1}_P)\cap \Tau_y (g_2\cX^{w_2}_P)\cap  \Tau_y  (g_3\cX_v^P) =(0),$$
where $\Tau$ denotes the Zariski tangent space. 

For~$v\in W^P$, we denote $v'\to v$ if $v'\in W^P$, $\ell(v')=\ell(v)-1$ and $v'\leq v$. 

\begin{lemma}
  \label{lem:Chiopen}
The subsets~$\cChi$ and $\cChi^+$ are open in~$\Chi$  for any triple $(w_1, w_2, v)\in (W^P)^3$. 

 In the definition of~$\cChi$ and $\cChi^+$  if we replace  $\cX^{w_i}_P$ (for any~$i=1,2$) by any~$B^-$\yh-stable open subset of~$\cX^{w_i}_P \cup \,(\bigcup_{w_i\to w_i'\in W^P}\,\cX^{w_i'}_P)$ and  $ \cX_v^P$ by any~$B$\yh-stable open subset of\break ${\cX_v^P\cup\,(\bigcup_{v'\to v, v'\in W^P}\, \cX_{v'}^P)}$, then  the lemma still remains true.
 
 \end{lemma}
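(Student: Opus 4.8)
The plan is to reduce the openness of $\cChi$ and $\cChi^+$ in $\Chi$ to a semicontinuity statement that can be checked on each finite-dimensional piece of a filtration, and then handle the two sets slightly differently: $\cChi$ is a set-theoretic "genericity" condition (the three Schubert cells, rather than their closures, are met) which is manifestly open once one knows that the incidence variety is nice, while $\cChi^+$ is the locus where a family of linear maps between Zariski tangent spaces is injective, an honest upper-semicontinuity condition for the rank. First I would fix a filtration $G/P=\cup_n (G/P)_n$ by $B$-stable (and $B^-$-stable, up to enlarging) finite-dimensional closed subvarieties, inducing compatible filtrations $\Chi = \cup_n \Chi_n$ with $\Chi_n$ closed finite-dimensional, so that it suffices to prove $\cChi\cap\Chi_n$ and $\cChi^+\cap\Chi_n$ are open in $\Chi_n$ for every $n$. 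This is legitimate because openness in an ind-variety means openness of the intersection with each term of the filtration.

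For $\cChi$: the point $(y,g_1\uo^-,g_2\uo^-,g_3\uo)\in\Chi$ lies in $\cChi$ iff $y\in g_1\cX_P^{w_1}$, $y\in g_2\cX^{w_2}_P$, and $y\in g_3\cX_v^P$ simultaneously, i.e. $y$ avoids the boundary $g_1(X_P^{w_1}\setminus\cX_P^{w_1})=\cup_{u< w_1, u\in W^P} g_1\cX_P^u$ (and similarly for the other two). Since each $X_P^{u}$ is closed, the set of $(y,g_1\uo^-,\dots)$ for which $y$ does lie in one of these boundary pieces is closed in $\Chi$ (it is again of the same "triple-incidence" form, with $w_1$ replaced by a smaller element, hence closed by the argument of Lemma~\ref{Eisclosed} applied on the $\Chi$ side), and $\cChi$ is the complement of the finite union of such closed sets over the Schubert divisors $u\to w_i$ and $v'\to v$. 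Hence $\cChi$ is open. The flexibility in the final sentence of the lemma is built in here: if we replace $\cX_P^{w_i}$ by a $B^-$-stable open subset $U_i$ of $\cX_P^{w_i}\cup\bigcup_{w_i\to w_i'} \cX_P^{w_i'}$, the complement of $U_i$ inside the corresponding closed Schubert variety is still a $B^-$-stable closed set, and "$y\in g_iU_i$" is again an open condition; the $B$-stable version for $v$ is identical.

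For $\cChi^+$: on $\cChi$ the three subspaces $\Tau_y(g_1\cX_P^{w_1})$, $\Tau_y(g_2\cX_P^{w_2})$, $\Tau_y(g_3\cX_v^P)$ of $\Tau_y(G/P)$ have constant dimensions $\ell(w_1)$, $\ell(w_2)$, $\ell(v)$ respectively (the Schubert cells being smooth), and inside $\Chi_n$ they vary in algebraic families of subspaces of the (finite-dimensional, after restriction) tangent spaces; I would organize this by pulling back the relevant tangent/normal bundles along the projection $\Chi_n\to (G/P)_m$ for suitable $m$ and noting that the displayed map is a morphism of vector bundles over $\cChi\cap\Chi_n$. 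Its injectivity is equivalent to its having rank $\ell(v)$, i.e. to a maximal-rank (hence open) condition, so $\cChi^+\cap\Chi_n$ is open in $\cChi\cap\Chi_n$, which is open in $\Chi_n$; thus $\cChi^+$ is open in $\Chi$. The main obstacle I anticipate is purely bookkeeping: making precise that the tangent spaces to the translated cells $g_i\cX_P^{w_i}$, as $(y,g_i\uo^-)$ ranges over $\Chi_n$, assemble into an algebraic subbundle of a trivializable-enough ambient tangent bundle over $\cChi\cap\Chi_n$ — one needs to descend from the homogeneous picture on $G$ (where $\Tau_{\uo}(\cX_P^{w})$ is a fixed $B$-submodule of $\Tau_{\uo}(G/P)$ and everything is $G$-translated) to the base $\Chi_n$, using that $\Chi = G\cdot(P/P,\bar C^+)$ and that over the open locus $\cChi$ the stabilizers behave well; granting that, the rank-upper-semicontinuity is standard, and the extension to open sub-cells in the final assertion goes through verbatim since it only shrinks the base over which the same vector-bundle map is considered.
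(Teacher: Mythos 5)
Your strategy is broadly the right one, and for $\cChi^+$ it is in essence the paper's: the defining condition only involves the finite-dimensional space $\Tau_y(g_3\cX_v^P)$ and the finite-dimensional quotients $\Tau_y(G/P)/\Tau_y(g_i\cX^{w_i}_P)$, so it is the injectivity locus of a morphism of finite-rank vector bundles, and such a locus is open. For $\cChi$ your route (complement in $\Chi$ of finitely many closed boundary-incidence loci) differs from the paper's, which pulls everything back along the projection $\pi\colon G^{\times 4}\to G/P\times\mathcal{X}$, writes $\pi^{-1}(\cChi)$ as the preimage of the product of the three open cells under $(g,g_1,g_2,g_3)\mapsto (g_1^{-1}g\dot e,\,g_2^{-1}g\dot e,\,g_3^{-1}g\dot e)$, and uses that $\pi$ is open; both routes work, but two of your assertions need fixing: the boundary of the opposite Schubert variety $X^{w_i}_P$ is the union of the cells $\cX^u_P$ with $u>w_i$ (equivalently of the finitely many $X^u_P$ with $w_i\to u$), not with $u<w_i$; and there is no nonempty finite-dimensional $B^-$-stable closed subvariety of $G/P$ (all $B^-$-orbits have finite codimension), though stability of the filtration plays no role in reducing openness to finite-dimensional pieces. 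Likewise the cells $\cX^{w_i}_P$ ($i=1,2$) are not $\ell(w_i)$-dimensional but of codimension $\ell(w_i)$; what is finite-dimensional, and what your bundle map must be built from, are the corresponding normal spaces.

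The step you defer (``granting that'') is exactly where the paper's proof does its work, and your first suggestion for it --- pulling bundles back along a projection $\Chi_n\to (G/P)_m$ --- cannot work as stated, since the subspaces in question depend on the $g_i$'s and not only on $y$. The paper never descends these bundles to $\Chi$ or to finite-dimensional pieces of it: it works upstairs on $\pi^{-1}(\cChi)\subset G^{\times 4}$, maps it by $\tilde{\cbeta}\colon (g,g_1,g_2,g_3)\mapsto (g_1^{-1}g,\,g_2^{-1}g,\,g_3^{-1}g)$ to $B^-w_1P\times B^-w_2P\times BvP$, over which the translated spaces $\Tau_{\dot e}(G/P)/\Tau_{\dot e}(h^{-1}\cX^{w_i}_P)$ and $\Tau_{\dot e}(h^{-1}\cX_v^P)$ are manifestly algebraic bundles of finite rank (being $P$-translates of fixed subspaces), takes the preimage of the open injectivity locus of the induced bundle map, and then descends openness through the open map $\pi$. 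This is the one genuinely missing construction in your write-up; your closing remark that one must exploit the homogeneous picture on $G$ is the right idea, and once it is implemented this way the ``In fact'' statement indeed goes through verbatim, as you say.
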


\begin{proof}
  Consider the projection~$$\pi: G^{\times 4} \to G/P \times \mathcal{X}, \,\,\,(g, g_1, g_2, g_3) \mapsto (gP/P, g_1\uo^-, g_2\uo^-, g_3\uo),$$
and define $\tilde{\Chi}:= \pi^{-1}(\Chi)$ and  $\tilde{\cChi}:= \pi^{-1}(\cChi)$. Then,
\begin{equation} \label{eqn 5.1}
\tilde{\Chi}= \{(g, g_1, g_2, g_3)\in G^{\times 4}: gP/P\in g_1X_P^{w_1}\cap g_2 X^{w_2}_P\cap g_3 X_{v}^P\},
\end{equation}
and
\begin{equation}\label{eqn5.2}
\tilde{\cChi}= \{(g, g_1, g_2, g_3)\in G^{\times 4}: gP/P\in g_1\cX_P^{w_1}\cap g_2\cX^{w_2}_P\cap g_3\cX_{v}^P\}.
\end{equation}
Define the morphism~$$\beta: \tilde{\Chi} \to X_P^{w_1}\times X^{w_2}_P\times X_{v}^P,\,\,\,(g, g_1, g_2, g_3) \mapsto (g_1^{-1}gP/P, g_2^{-1} gP/P, g_3^{-1}gP/P).$$
Then, 
$$\tilde{\cChi} = \beta^{-1} \left(\cX_P^{w_1}\times \cX^{w_2}_P\times \cX_{v}^P\right)$$
and hence $\tilde{\cChi}$~is open in~$\tilde{\Chi}$. Thus, $\pi$ being an open map, ${\cChi}$~is open in~${\Chi}$. 

We now prove that 
\begin{center}
$\cChi^+$~is open in~$\cChi$ (and hence in~$\Chi$).
\end{center}
By the Equation~\eqref{eqn5.2}
\begin{equation}
\label{eqn5.4}
\begin{array}{l@{\,}l@{\,}l}
\pi^{-1}(\cChi)&= \tilde{\cChi} &= \{(g, g_1, g_2, g_3)\in G^{\times      4}: \\ 
&&g^{-1}g_1\in Pw_1^{-1}U^-,\,  g^{-1}g_2\in Pw_2^{-1}U^-,\,
 g^{-1}g_3\in Pv^{-1}U\}, 
\end{array}
\end{equation}
and 
\begin{equation}\label{eqn5.5}
  \begin{array}{l@{\,}l}
\pi^{-1}(\cChi^+)= & \{(g, g_1, g_2, g_3)\in \pi^{-1}(\cChi) : \\
&\Tau_{\dot{e}} (g^{-1}g_1\cX_P^{w_1})\cap \Tau_{\dot{e}} (g^{-1}g_2\cX_P^{w_2})\cap
\Tau_{\dot{e}}(g^{-1}g_3\cX_v^P)= (0)\},
 \end{array}
\end{equation}
where $\dot{e}:=P/P\in G/P$.
Consider the morphism~$$\tilde{\cbeta}:\tilde{\cChi} \to \tilde{\cX}_{w_1, w_2, v}:= \tilde{\cX}_P^{w_1}\times \tilde{\cX}^{w_2}_P\times \tilde{\cX}_{v}^P,\,\,\,(g, g_1, g_2, g_3) \mapsto (g_1^{-1}g, g_2^{-1} g, g_3^{-1}g),$$
where $ \tilde{\cX}_P^{w_i}:=B^-w_iP\subset G$ and similarly $\tilde{\cX}_{v}^P:=BvP\subset G$. Define the finite rank vector bundle~$\mathcal{E}_i$ over~$ \tilde{\cX}_P^{w_i}$ ($i=1,2$) by~$$\bigcup_{h_i\in \tilde{\cX}_P^{w_i}}\, \Tau_{\dot{e}}(G/P)/\Tau_{\dot{e}} (h_i^{-1}\cX^{w_i}_P)\to  \tilde{\cX}_P^{w_i}, $$ 
and similarly  the finite rank vector bundle~$\mathcal{E}_3$ over~$ \tilde{\cX}^P_{v}$  by~$$\bigcup_{h\in \tilde{\cX}^P_{v}}\, \Tau_{\dot{e}} (h^{-1}\cX_{v}^P)\to  \tilde{\cX}_P^{v},$$
and  a morphism over~$ \tilde{\cX}_{w_1, w_2, v}$:
$$\varphi: \pi_3^*(\mathcal{E}_3)\to  \pi_1^*(\mathcal{E}_1)\oplus \pi_2^*(\mathcal{E}_2)$$
induced by the canonical inclusion of~$ \Tau_{\dot{e}} (h^{-1}\cX_{v}^P)\hookrightarrow \Tau_{\dot{e}}(G/P)$, where $\pi_i$~is the projection from~$ \tilde{\cX}_{w_1, w_2, v}$ to the $i$\yh-th factor.. The set of points~$Z\subset  \tilde{\cX}_{w_1, w_2, v}$ where $\varphi$~is injective is clearly open.
But, it is easy to see that~$(\tilde{\cbeta})^{-1}(Z)= \pi^{-1}(\cChi^+)$, and hence $ \pi^{-1}(\cChi^+)$~is open in~$\tilde{\cChi}$ and thus $\cChi^+$~is open in~$\cChi$. This proves the first part of the lemma.

The proof for the second (stronger) part of  the lemma  is identical.
\end{proof}

\subsection{Divisors from Schubert varieties}
In the remaining part of this Section \ref{section5}, $P$~is still a standard parabolic subgroup (and not necessarily maximal). We fix $(w_1,w_2, v)\in
(W^P)^{3}$  such that~$\eps_P^v$ occurs with coefficient~$1$ in 
the deformed product
$$\eps_P^{w_1}\odot_0 \eps_P^{w_2}
\in \bigl(\Hd^*(X_P,\ZZ), \odot_0\bigr).$$
In particular, $w_1, w_2\leq v$.
\subsection{The setting} \label{subsection5.4}
By a {\it descent} we mean a pair $(\alpha, i)\in \Delta \times \{1, 2, 3\}$ such that 
\begin{itemize}
\item $i=1, 2$ and $\alpha\in \Delta^+(w_i)$, i.e., $s_\alpha w_i\in W^P$ and $\ell(s_\alpha w_i)= \ell(w_i)+1$. 

In this case we set $\bar{w}_i= s_\alpha w_i, \bar{w}_{3-i}= w_{3-i}$ and $\bar{v}=v$. Or
\item $i=3$ and $\alpha\in \Delta^-(v)$, i.e., $\ell(s_\alpha v)= \ell(v)-1$. 

In this case we set $\bar{w}_i= w_i$ (for $i=1, 2$)  and $\bar{v}=s_\alpha v$. 
\end{itemize}
The set of all descents is denoted by $\mcD$.

We now aim to prove that

\begin{equation}
E_{\alpha, i}:= E_{\bar{w}_1,\bar{w}_2,\bar{v}} \,\,\,\text{defined by
  equation \eqref{eq:defE}}
\label{eq:defEbar}
\end{equation}

is the zero set of a section of some $G$-linearized line bundle on $\mathcal{X}$. To do this, we consider three different situations of descents:

\vskip1ex
{\bf Type $A$:} Descent $(\alpha, i)$ with $i=1$ or $2$ such that  $v=\bar{v}\not\geq \bar{w}_i$.

\vskip1ex
{\bf Type $B$:} Descent $(\alpha, i)$ with $i=3$ such that $\bar{v}\not\geq w_j=\bar{w}_j$ for at least one $j=1, 2$.

\vskip1ex
{\bf Type $C$:} Descent $(\alpha, i)$ with $1\leq i\leq 3$ such that  the relation $\bar{v}\geq \bar{w}_j$ holds for both $j=1, 2$.

\subsection{Type $A$ descents} 
\begin{lemma} \label{typealemma} For a descent $(\alpha, i)$ of type $A$, 
$$E_{\alpha, i}:= E_{\bar{w}_1,\bar{w}_2,\bar{v}} = F_{\alpha, i},$$
where $ F_{\alpha, i}$ is defined by equation \eqref{eq:defF}.
\end{lemma}
\begin{proof}
Assume that~$i=2$. (The proof for $i=1$ is identical.) 
Recall from the Equation~\eqref{eqn4.1}:
$$
\Chi :=\{(y, g_1\uo^-,g_2\uo^-,g\uo)\in G/P \times \mcX\,:\,y\in
g_1X^{w_1}_P\cap g_2X^{w_2}_P\cap gX_v^P\},
$$
for the triple $(w_1,  w_2, v)$. Consider 
 its analogue for~$w_2$ replaced by~$\bar{w}_2:=s_\alpha w_2$:
$$
\Chi':=\Chi'_{\alpha,2}:=\{(y, g_1\uo^-,g_2\uo^-,g\uo)\in G/P \times \mcX\,:\,y\in
g_1X^{w_1}_P\cap g_2X^{\bar{w}_2}_P\cap gX_v^P\}, 
$$
and $\Chi'_{\alpha, i}$~has  a similar meaning, where we place $s_\alpha$ in the $i$\yh-th factor.   

Let $\eta\,:\, G/P \times \mcX\longto\mcX$~be the projection.
By Lemma~\ref{Eisclosed}, $\eta(\Chi')=E_{\alpha,2}$ (cf. the identity \eqref{neweqn2.1}) is
  closed in~$\mcX$ and ind-irreducible.
Define the open subset of~$\mcX$ :
$$
\cmcX:=\{(x_1,x_2,x)\in\mcX\,:\, (x_1,x)\in G.(\uo^-,\uo)\}.
$$
To prove that $\cmcX$ is open in $\mathcal{X}$, use the standard isomorphism $G\times_{B^-}G/B\simeq G/B^-\times G/B$.
Since $(\uo^-,s_\alpha\uo^-,\uo)\in \cmcX\cap F_{\alpha,2}$ and
$F_{\alpha,2}$~is ind-irreducible (cf. $\S$\ref{section4.1}), we have
\begin{equation}
  \label{eq:Fbar}
  \overline{\cmcX\cap F_{\alpha,2}}=F_{\alpha,2}.
\end{equation}
Since $w_1\leq v$, the Richardson
variety~$X_v^{w_1}(P):= X_v^P\cap X_P^{w_1}$~is nonempty.  Take $x \in X_v^{w_1}(P)$.
There exists $g\in G$ such that~$g\inv x\in X^{s_\alpha w_2}_P$. 
Then, $(\uo^-,g\uo^-,\uo)$ belongs to~$\cmcX\cap\eta(\Chi')$. 
Since $\eta(\Chi')$~is ind-irreducible, we deduce that
\begin{equation}
  \label{eq:Ebar}
  \overline{\cmcX\cap\eta(\Chi')}=\eta(\Chi').
\end{equation}
By \eqref{eq:Fbar} and \eqref{eq:Ebar}, it is sufficient to prove that
\begin{equation}
  \label{eq:Xcirc}
  \cmcX\cap\eta(\Chi')=\cmcX\cap F_{\alpha,2}.
\end{equation}

But $G\times_T G/B^-\longto \cmcX, [g:x]\longmapsto
(g\uo^-,gx,g\uo)$~is an isomorphism. Consider the intersection of~$\Chi$ with $G/P\times \uo^-\times G/B^-\times \uo$: 
$$
\Chi_{\bs\bs} =\{(x, g\uo^-)\in  X_v^{w_1}(P)\times G/B^-\,:\,x\in gX^{w_2}_P\}
$$
and
$$
{\Chi'}_{\bs\bs}=\{(x, g\uo^-)\in  X_v^{w_1}(P)\times G/B^-\,:\,x\in
gX^{\bar{w}_2}_P\}.
$$
Since $\Chi$~is closed in~$G/P \times \mcX$ (see above Lemma~\ref{lem:Chiopen}), $\Chi_{\bs\bs}$ and $\Chi'_{\bs\bs}$ are closed in ${X_v^{w_1}(P)\times G/B^-}$. Note that
\begin{equation}\label{eqn71'} \Chi\cap (G/P \times \cmcX)\simeq G\times_T \Chi_{\bs\bs},\,\,\,
\Chi'\cap (G/P \times \cmcX)\simeq G\times_T {\Chi'}_{\bs\bs}
\end{equation} under the maps~$$\delta: [g: (x, h\uo^-)]\mapsto (gx, g\uo^-, gh\uo^-, g\uo)$$ 
and
$\cmcX \cap F_{\alpha,2} \simeq G\times_T
\overline{Bs_\alpha \uo^-}$.
Thus, to prove \eqref{eq:Xcirc}, it is sufficient to prove that 
\begin{equation}
  \label{eq:Ess}
  \hat{\mcX}_{\bs\bs}=\overline{Bs_\alpha \uo^-},
\end{equation}
where $\hat{\mcX}_{\bs\bs} :=\{g\uo^-\in G/B^-\,:\, X_v^{w_1}(P)\cap g X^{s_\alpha w_2}_P\neq
\emptyset\}$. By Lemma~\ref{Eisclosed},  $\hat{\mcX}_{\bs\bs}$~is closed in~$G/B^-$.

By the identity \eqref{eq:Ess}, it suffices to prove that $\hat{\mcX}_{ss}= \overline{Bs_\alpha \uo^-}$. By equation \eqref{image} applied in the setting of Corollary  \ref{prop:interSchub2} for $w=w_2$,
 we get $\hat{\mcX}_{ss}\subset \overline{Bs_\alpha \uo^-}$. Moreover, by Corollary \ref{prop:interSchub2}, 
$Bs_\alpha \uo^- \subset \hat{\mcX}_{ss}$ (since $X_v^P\cap X_P^{w_2}\cap g X^{w_1}_P\neq \emptyset$ for any$g\in G$ due to the fact that $\eps^v_P$ occurs in $\eps^{w_1}_P\cdot \eps^{w_1}_P$ with nonzero coefficient \cite[Proposition 3.5]{BrownKumar}).
 But since $\hat{\mcX}_{ss}$ is closed in $G/B^-$, we get $\hat{\mcX}_{ss}= \overline{Bs_\alpha \uo^-}$.
 \end{proof}

\subsection{Type $B$ descents} 
\begin{lemma} \label{typeblemma} For a descent $(\alpha, 3)$ of type $B$ such that $w_j=\bar{w}_j \not\leq \bar{v}$ (for some $1\leq j\leq 2$),
$$E_{\alpha, 3}:= E_{\bar{w}_1,\bar{w}_2,\bar{v}} = F_{\alpha, j},$$
where $ F_{\alpha, j}$ is defined by equation \eqref{eq:defF}.
\end{lemma}
\begin{proof}
Without loss of generality take $j=2$. By Lemma~\ref{Eisclosed}, $E_{\alpha,3}$  is
  closed and
 ind-irreducible.
Define the open subset of~$\mcX$ :
$$
\cmcX:=\{(x_1,x_2,x)\in\mcX\,:\, (x_1,x)\in G.(\uo^-,\uo)\}.
$$
Since $(\uo^-,s_\alpha\uo^-,\uo)\in \cmcX\cap F_{\alpha,2}$ and
$F_{\alpha,2}$~is ind-irreducible (cf. $\S$\ref{section4.1}), we have
\begin{equation}
  \label{eq1:Fbar}
  \overline{\cmcX\cap F_{\alpha,2}}=F_{\alpha,2}.
\end{equation}
Since $w_1\leq \bar{v}:= s_\alpha v$, the Richardson
variety~$X_{\bar{v}}^{w_1}(P):= X_{\bar{v}}^P\cap X_P^{w_1}$~is nonempty.  Take $x \in X_{\bar{v}}^{w_1}(P)$.
There exists $g\in G$ such that~$g\inv x\in X^{ w_2}_P$. 
Then, $(\uo^-,g\uo^-,\uo)$ belongs to~$\cmcX\cap\eta(\Chi')$, where $\Chi':= \Chi'_{\alpha, 3}$. 
Since $\eta(\Chi')$~is ind-irreducible, we deduce that
\begin{equation}
  \label{eq1:Ebar}
  \overline{\cmcX\cap\eta(\Chi')}=\eta(\Chi').
\end{equation}
By \eqref{eq1:Fbar} and \eqref{eq1:Ebar}, it is sufficient to prove that
\begin{equation}
  \label{eq1:Xcirc}
  \cmcX\cap\eta(\Chi')=\cmcX\cap F_{\alpha,2}.
\end{equation}

But $G\times_T G/B^-\longto \cmcX, [g:x]\longmapsto
(g\uo^-,gx,g\uo)$~is an isomorphism. Consider the intersection of~$\Chi$ with $G/P\times \uo^-\times G/B^-\times \uo$:
$$
\Chi_{\bs\bs} =\{(x, g\uo^-)\in  X_v^{w_1}(P)\times G/B^-\,:\,x\in gX^{w_2}_P\}
$$
and its closed subset~$$
{\Chi'}_{\bs\bs}=\{(x, g\uo^-)\in  X_{\bar{v}}^{w_1}(P)\times G/B^-\,:\,x\in
gX^{w_2}_P\}.
$$
Since $\Chi$~is closed in~$G/P \times \mcX$ (see above Lemma~\ref{lem:Chiopen}), $\Chi_{\bs\bs}$ and $\Chi'_{\bs\bs}$ are closed in ${X_v^{w_1}(P)\times G/B^-}$. Note that
\begin{equation}\label{eqn171'} \Chi\cap (G/P \times \cmcX)\simeq G\times_T \Chi_{\bs\bs},\,\,\,
\Chi'\cap (G/P \times \cmcX)\simeq G\times_T {\Chi'}_{\bs\bs}
\end{equation} under the maps~$$\delta: [g: (x, h\uo^-)]\mapsto (gx, g\uo^-, gh\uo^-, g\uo)$$ 
and
$\cmcX \cap F_{\alpha,2} \simeq G\times_T
\overline{Bs_\alpha \uo^-}$.
Thus, to prove \eqref{eq1:Xcirc}, it is sufficient to prove that 
\begin{equation}
  \label{eq1:Ess}
  \hat{\mcX}_{\bs\bs}=\overline{Bs_\alpha \uo^-},
\end{equation}
where $\hat{\mcX}_{\bs\bs} :=\{g\uo^-\in G/B^-\,:\, X_{s_\alpha v}^{w_1}(P)\cap g X^{ w_2}_P\neq
\emptyset\}$. By Lemma~\ref{Eisclosed},  $\hat{\mcX}_{\bs\bs}$~is closed in~$G/B^-$.

By the identity \eqref{eq1:Ess}, it suffices to prove that $\hat{\mcX}_{ss}= \overline{Bs_\alpha \uo^-}$. By equation \eqref{image} applied in the setting of Proposition \ref{prop:interSchub} for $w=w_2$,
we get $\hat{\mcX}_{ss}\subset \overline{Bs_\alpha \uo^-}$. Moreover, by Proposition \ref{prop:interSchub}, 
$Bs_\alpha \uo^- \subset \hat{\mcX}_{ss}$. But since $\hat{\mcX}_{ss}$ is closed in $G/B^-$, we get $\hat{\mcX}_{ss}= \overline{Bs_\alpha \uo^-}$.
\end{proof}

\subsection{Type $C$ descents} 

Take a descent $(\alpha, i), 1 \leq i \leq 3$,  of type $C$. Thus, 

\label{sec:wcd}
\begin{enumerate}[label={\rm(\roman*)}]
\item $\bar{w}_1 \leq \bar{v}$ and $\bar{w}_2\leq \bar{v}$;
\item $\ell(\bar{v})=\ell(\bar{w}_1)+\ell(\bar{w}_2)-1$;
\item there exist $l_1,\,l_2$ and $l_3$ in~$L$ such that the linear
  map~$$
\textstyle l_3\Tau_{\bar{v}}\longto\frac{\Tau}{l_1\Tau^{\bar{w}_1}}\oplus\frac{\Tau}{l_2\Tau^{\bar{w}_2}}
$$ 
is injective, where the Zariski tangent spaces
\end{enumerate}
$$
  \Tau=T_{\dot{e}}(G/P), \,\,\,
\Tau^{\bar{w}_i}=T_{\dot{e}}({\bar{w}_i}\inv X^{\bar{w}_i}_P),\,\,\,\text{and}\,\, 
\Tau_{\bar{v}}=T_{\dot{e}}(\bar{v}\inv X_{\bar{v}}^P).
$$
The above  condition  {\rm(iii)} follows from the following lemma.
\begin{lemma}\label{lem:wbdwc}
   For any descent ~$(\alpha,i)$ of type $C$, the triple $(\bar{w}_1,\bar{w}_2,
  \bar{v})$ satisfies the above condition (iii).   
\end{lemma}

\begin{proof} We first prove the lemma  for a descent $(\alpha, 1)$ of type $C$. By the proof of 
\cite[Lemma~19]{R:KM1}, there exists $l_1, l_2, l_3\in L$ such that~$$l_3\Tau_v\cap l_1\Tau^{w_1}\cap l_2\Tau^{w_2}= (0).$$
Now, $\Tau^{w_1}\supset \Tau^{\bar{w}_1}$, since
$$\Tau^{w_1}=\bigoplus_{\beta\in \Phi_P^+\cap w_1^{-1}\Phi^+}\, \lg_{-\beta}\,\,\,\text{and}\,\, \Tau^{\bar{w}_1}=\bigoplus_{\beta\in \Phi_P^+\cap \bar{w}_1^{-1}\Phi^+}\, \lg_{-\beta},$$
where $\Phi^+$~is the set of positive roots of the Kac-Moody Lie algebra $\lg$ and $\Phi_P^+:=\Phi^+\setminus\Phi^+(L)$ ($\Phi^+(L)$ being the set of positive roots of~$L$). Thus,
$$l_3\Tau_v\cap l_1\Tau^{\bar{w}_1}\cap l_2\Tau^{w_2}= (0).$$

The proof of the lemma for any descent $(\alpha, i)$ of type $C$ for $i=2$ or $3$ is identical.
\end{proof}

\begin{PROP}
\label{prop:linebundleSchubdiv} Let $(\alpha, i)$ be any descent of type $C$. Then, 
there exist a $G$\yh-linearized line bundle~$\Li_{\bar{w}_1,\bar{w}_2,\bar{v}}$ over~$\mathcal{X}$ of the form~$\Li_{\bar{w}_1,\bar{w}_2,\bar{v}}=\Li^-(\lambda_1)\boxtimes \Li^-(\lambda_2)\boxtimes \Li(\mu)$ for some~$ 
(\lambda_1, \lambda_2, \mu)\in P_+^{3}$ and
a nonzero $G$\yh-invariant section~$\sigma_{\bar{w}_1,\bar{w}_2,\bar{v}}$ of~$\Li_{\bar{w}_1,\bar{w}_2,\bar{v}}$ such that~$$
E_{\bar{w}_1,\bar{w}_2,\bar{v}}=\{x\in \mathcal{X}\,:\,\sigma_{\bar{w}_1,\bar{w}_2,\bar{v}}(x)=0\}.
$$   
\end{PROP}

Before we come to the proof of the proposition, we need to prove some preparatory results.

Let $U$~be the commutator subgroup~$[B,B]$ of~$B$ and $U\uo^-$ the open cell in~$G/B^-$. 
Set~$$
\Omega=\{(x_1,x_2,g_3\uo)\in \mathcal{X}\,:\, g_3\inv x_i\in U\uo^-\mbox{ for }i=1,2\}.
$$
It is easy to see that~$\Omega$~is open in~$\mathcal{X}$.

The construction of~$\Li_{\bar{w}_1,\bar{w}_2,\bar{v}}$ and $\sigma_{\bar{w}_1,\bar{w}_2,\bar{v}}$~is
made in two steps:

\medskip

(1)  construct  their restrictions to~$\Omega$ by using a slice
  technique to reduce to the case of finite-dimensional varieties 
  (see Lemma~\ref{lem:LsOmega} below). 
Now,  $E _{\bar{w}_1,\bar{w}_2,\bar{v}}$ corresponds to  the subvariety~$\hat E$
(see \eqref{eq:defhatE} below) of an affine space.
Lemma~\ref{lemma6} proves that~$\hat E$~is a closed divisor using Lemma~\ref{lem:Chiopen}. 

(2) Twist the restriction $(\Li_{\bar{w}_1,\bar{w}_2,\bar{v}})\restrict{\Omega}$ to avoid
  components of the zero locus of~$\sigma_{\bar{w}_1,\bar{w}_2, \bar{v}}$ in the boundary~${\mathcal X}-\Omega$. This step uses Lemmas~\ref{lem:LsOmega} and
  \ref{lem:vFfini} below.  

\medskip

Observe that, by the Birkhoff decomposition \cite[Theorem~6.2.8]{Kumar:KacMoody},
\begin{equation}\label{eqn3.2.1}
\mathcal{X}=\Omega\sqcup\left (\bigcup_{\alpha\in\Delta, \,
  i=1,2}F_{\alpha,i}\right ).
\end{equation}

Consider the group homomorphism $\theta\,:\,U\longto\Aut(X_{\bar{v}}^P)$ given by the action and let $U_{\bar{v}}$~be  its image. 
Note that~$U_{\bar{v}}$~is a finite-dimensional unipotent group. 
Set 
\begin{equation}
  \label{eq:defhatE}
\hat E:=\{u\in U_{\bar{v}}\,:\,
\left(uX^{\bar{w}_1}_{\bar{v}}(P)\right)\cap
X_{\bar{v}}^{\bar{w}_2}(P)\neq\emptyset\}.
\end{equation}

\begin{lemma}\label{lemma6}
  The subset~$\hat E$ of~$U_{\bar{v}}$~is a closed irreducible divisor of~$U_{\bar{v}}$.
\end{lemma}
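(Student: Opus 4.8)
\textbf{Proof plan for Lemma~\ref{lemma6}.}

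The plan is to identify $\hat E$ with a hyperplane-section-type locus inside the finite dimensional unipotent group $U_v$, and to exploit the combinatorial hypotheses (1)--(3) in \S\ref{sec:wcd} via the deformed-product machinery. First I would observe that $U_v$ is a finite dimensional affine space (as a variety), since it is the unipotent image $\theta(U)$ and any unipotent algebraic group is isomorphic as a variety to affine space; concretely $U_v$ is the orbit of $\uo$ under $U$ inside $X_v^P$, so it is the big cell $\cX_v^P = BvP/P$ of $X_v^P$ (up to the shift by $v$). The point $u \in U_v$ lies in $\hat E$ exactly when the three subvarieties $u X_v^{w_1}(P)$, $X_v^{w_2}(P)$ of $X_v^P$ meet; here $X_v^{w_i}(P) = X_v^P \cap X^{w_i}_P$ is the Richardson-type intersection inside the Schubert variety $X_v^P$, of codimension $\ell(w_i)$ in $X_v^P$ by hypothesis (1) (so it is nonempty). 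Since $\dim X_v^P = \ell(v) = \ell(w_1)+\ell(w_2)-1$ by hypothesis (2), the expected dimension of $u X_v^{w_1}(P) \cap X_v^{w_2}(P)$ is $\ell(v) - \ell(w_1) - \ell(w_2) = -1$, so generically empty; thus $\hat E$ is a proper closed subset of $U_v$, and the content of the lemma is that it is a divisor (pure codimension one) and irreducible.

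Next I would set up $\hat E$ as the image of an incidence variety. Consider
$$
\Xi := \{(u, y) \in U_v \times X_v^P \,:\, y \in u X_v^{w_1}(P) \cap X_v^{w_2}(P)\},
$$
with its two projections $p\colon \Xi \to U_v$ and $q\colon \Xi \to X_v^P$. Then $\hat E = p(\Xi)$. Since $U_v \times X_v^{w_2}(P)$ fibers over $X_v^P$ and the condition $y \in u X_v^{w_1}(P)$ cuts out, in each fiber of $q$ over a point $y \in X_v^{w_2}(P)$, a translate of a Schubert-cell-type subset of $U_v$, one sees that $q\colon \Xi \to X_v^{w_2}(P)$ is a fiber bundle (or at least has irreducible equidimensional fibers) with fiber an affine space of the appropriate dimension; hence $\Xi$ is irreducible of dimension $\dim U_v + \ell(v) - \ell(w_1) - \ell(w_2) = \dim U_v - 1$. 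Irreducibility of $\hat E$ then follows since it is the image of the irreducible $\Xi$. For the divisor claim I would show $p\colon \Xi \to \hat E$ is generically finite --- equivalently that for generic $u \in \hat E$ the set $u X_v^{w_1}(P) \cap X_v^{w_2}(P)$ is finite (in fact a single reduced point). This is exactly where hypothesis (3) enters: the transversality of tangent spaces $l_3\Tau_v \to \Tau/l_1\Tau^{w_1} \oplus \Tau/l_2\Tau^{w_2}$ being injective is the infinitesimal statement that the two translated subvarieties meet transversally at a smooth point of $X_v^P$ in the expected (here negative-plus-one) dimension, i.e., meet in an isolated reduced point; by $L$-equivariance and openness of the transversality locus (cf.\ Lemma~\ref{lem:Chiopen}), this holds on a dense open subset of $\hat E$. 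Therefore $\dim \hat E = \dim \Xi = \dim U_v - 1$, so $\hat E$ is an irreducible divisor, and it is closed being the image of a proper-over-$U_v$ family (the fiber direction of $q$ is bounded, $X_v^P$ being projective, so $p$ is a proper morphism and $\hat E = p(\Xi)$ is closed).

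The main obstacle I expect is pinning down the generic finiteness of $p\colon \Xi \to \hat E$ rigorously, i.e., translating hypothesis (3) into the statement that the generic fiber of $p$ is zero-dimensional. One has to check that the single $L$-translate for which tangent-space transversality holds can be moved by the $U_v$-action (not just the $L$-action) to a dense open subset of $\hat E$, and that transversality of Zariski tangent spaces at one intersection point really does force the whole scheme-theoretic intersection to be that one reduced point --- this uses that $X_v^P$ is smooth (or at least that the intersection point lies in the smooth locus, which is where $\cX_v^P$, $\cX_v^{w_i}$ live) together with dimension counting, in the spirit of Kleiman transversality adapted to this Kac--Moody setting. The openness input is supplied by Lemma~\ref{lem:Chiopen} (and its `In fact' extension), and the nonemptiness of $\hat E$ itself --- needed so that `divisor' is not vacuous --- should follow from the hypothesis that $\epsilon_P^v$ occurs with coefficient $1$ in $\epsilon_P^{w_1} \odot_0 \epsilon_P^{w_2}$, which via the cohomological interpretation of $\odot_0$ guarantees the relevant triple intersection is nonempty for suitable translates.
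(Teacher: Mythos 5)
Your skeleton coincides with the paper's: the incidence variety $\Xi$ is exactly the paper's $\hat\Chi\subset U_v\times X_v^{w_2}(P)$, closedness of $\hat E$ comes from properness of the projection to $U_v$ (projectivity of $X_v^{w_2}(P)$), the dimension count uses hypothesis (ii), and the codimension-one statement comes from generic finiteness of that projection via the tangent condition (iii). However, the two steps that carry the real content are not established in your proposal. First, irreducibility. The fibers of $q$ are not affine spaces, nor ``translates of Schubert-cell-type subsets'': over a point $y\in\cX^P_{v'}\cap X^{w_2}_P$ (and note the image of $q$ is only the open locus where $v'\geq w_1$, so $q$ is not a fiber bundle on all of $X_v^{w_2}(P)$) the fiber is the preimage, under the orbit map $u\mapsto u^{-1}y$, of the open Richardson variety $\cX^P_{v'}\cap X^{w_1}_P$; its irreducibility is a nontrivial input which the paper cites (\cite[Proposition 6.6]{Kumar:positivity}). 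Moreover, even granting irreducible equidimensional fibers over an irreducible base, you cannot conclude that $\Xi$ is irreducible: the standard fiber criterion requires properness or openness of $q$, and here it is the \emph{other} projection $p$ that is proper. Your fibration argument only yields the bound $\dim\Xi\leq\dim U_v-1$. The paper proves irreducibility differently: $\hat\Chi$ is the image of the surjection $q(y,u_1,u_2)=(\theta(u_2^{-1}u_1),u_2^{-1}y)$ from $Y'\simeq\cChi_1$, whose irreducibility comes from the explicit product description $\overline{BvP}\times\overline{Pw_1^{-1}B^-/B^-}\times\overline{Pw_2^{-1}B^-/B^-}\simeq\hat\Chi_\bs$. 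This idea is missing from your proposal.

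Second, the step you yourself flag as the main obstacle is precisely where the proof lives, and ``by $L$-equivariance and openness of the transversality locus'' does not close it: $L$ does not act on $\hat E\subset U_v$ in a way that spreads the single transverse $L$-translate over a dense subset. The paper's mechanism is: the transversality locus $\cChi^+$ is open (Lemma~\ref{lem:Chiopen}); by (iii) it meets the slice $\Chi_\bs$, and since $\Chi_\bs$ is irreducible its intersection with the big-cell locus $\cChi_1$ is nonempty; transporting this locus through the surjection $q:Y'\to\hat\Chi$ and invoking Chevalley's theorem, its image contains a dense open $\hat\Chi^+$ of the (already proved) irreducible $\hat\Chi$, on which $p_1$ has finite fibers, whence $\dim\hat E=\dim\hat\Chi=\dim U_v-1$. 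Two smaller corrections: you only need generic finiteness of $p$, not that the whole intersection is one reduced point (that stronger fact is Lemma~\ref{newlemma12}, proved later by a separate Zariski-main-theorem argument and not needed here); and nonemptiness of $\hat E$ has nothing to do with the coefficient-one hypothesis on $\epsilon_P^{w_1}\odot_0\epsilon_P^{w_2}$, which is not an assumption of this lemma --- condition (i) alone gives $vP/P\in X^{w_1}_v(P)\cap X^{w_2}_v(P)$, so $e\in\hat E$.
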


\begin{proof}
  Consider the closed subset of~$ U_{\bar{v}}\times X_{\bar{v}}^{\bar{w}_2}(P)$:
$$
\hat\Chi:=\{(u,x)\in U_{\bar{v}}\times X_{\bar{v}}^{\bar{w}_2}(P)\,:\,u\inv x\in X_{\bar{v}}^{\bar{w}_1}(P)\},
$$
with its two projections~$p_1$ and $p_2$ on~$U_{\bar{v}}$ and $X_{\bar{v}}^{\bar{w}_2}(P)$ respectively.
Since $X_{\bar{v}}^{\bar{w}_2}(P)$~is projective, $p_1$~is proper. In particular, $\hat
E =p_1(\hat\Chi)$~is closed in~$U_{\bar{v}}$. 

Recall the definition of~$\Chi$ from the Equation~\eqref{eqn4.1} and as defined earlier in the proof of Lemma~\ref{Eisclosed},
\begin{align*}\Chi_\bs &:=\Chi\cap \left(G/P\times G/B^-\times G/B^-\times \{\uo\}\right)\\
&=\{(y, g_1\uo^-, g_2\uo^-, \uo)\in G/P\times \mathcal{X}: y\in (g_1X^{\bar{w}_1}_P)\cap  (g_2X^{\bar{w}_2}_P)\cap X^P_{\bar{v}}\},
\end{align*}
 its open subset~$$\cChi_1:=\Chi_\bs\cap \left(G/P\times (U\cdot\uo^-)\times  (U\cdot\uo^-)\times \{\uo\}\right),$$
and
$$\hat{\Chi}_\bs:=\pi_1^{-1}(\Chi_\bs),\,\,\,\text{where $\pi_1: G \times \mathcal{X}\to G/P \times \mathcal{X}$~is the projection}.$$
Then,
$$(\overline{B\bar{v}P})\times (\overline{P\bar{w}_1^{-1}B^-/B^-})\times (\overline{P\bar{w}_2^{-1}B^-/B^-})\simeq \hat{\Chi}_\bs, \, (g, x_1, x_2)\mapsto (g, gx_1, gx_2, \uo).$$
Hence, $\hat{\Chi}_\bs$~is irreducible and thus  so is its quotient~$\Chi_\bs$. By the condition (i) at the beginning of subsection ~\ref{sec:wcd}, $\cChi_1$~is
nonempty. By the condition (iii) at the beginning of subsection ~\ref{sec:wcd} and Lemma~\ref{lem:Chiopen}, $\Chi_\bs\cap \cChi^+$~is a nonempty open subset of~$\Chi_\bs$. Since $\Chi_\bs$~is irreducible and  $\Chi_\bs\cap \cChi^+$  and $\cChi_1$ are nonempty open subsets of irreducible~$\Chi_\bs$, their intersection~$$\cChi_1^+:=\cChi_1\cap \cChi^+\,\,\,\text{is nonempty}.$$

Consider the ind-variety $Y=G/P\times U\times U$ and the morphism~$$\alpha: Y \to G/P^{\times 3},\,\,\, (y, u_1, u_2) \mapsto (u_1^{-1}y, u_2^{-1}y, y).$$
Let~$Y' = Y_{(\bar{w}_1, \bar{w}_2, \bar{v})}\subset Y$  be the  closed ind-subvariety
$$ Y':=\alpha^{-1}\left(X^{\bar{w}_1}_P\times X^{\bar{w}_2}_P \times X_{\bar{v}}^P\right).$$
Then, there is an isomorphism
$$\hat{\beta}: \cChi_1\simeq Y',\,\,\,(y, u_1\uo^-, u_2\uo^-, \uo) \mapsto (y, u_1, u_2).$$
In particular, $Y'$~is also irreducible. Let~$$Y'_+:=\hat{\beta} (\cChi_1^+)\subset Y'\,\,\,\text{be the nonempty open subset}.$$
Consider the morphism~$$q:Y'\to \hat{\Chi},\,\,\,(y, u_1, u_2)\mapsto (\theta(u_2^{-1}u_1), u_2^{-1}y).$$
Clearly, $q$~is surjective. In particular, we obtain that~$ \hat{\Chi}$~is irreducible and hence so is ${\hat{E}=p_1(\hat{\Chi})}$.

We now determine the  image of~$p_2$: Let~$x\in X^{\bar{w}_2}_{\bar{v}}$(P) and let $v'\leq \bar{v}$~be such that~$v'\in W^P$ and 
$x\in \cX_{v'}^P$. Then, $x\in \Image (p_2)$ if and only if $Ux\cap
X^{\bar{w}_1}_P\neq\emptyset$ if and only if $\bar{w}_1\leq v'$ (cf. \cite[Lemma~7.1.22]{Kumar:KacMoody}). We deduce that
\begin{equation}\label{neweqn6.1}
\Image (p_2)=X^{\bar{w}_2}_P\cap\left(\bigcup_{\bar{w}_1\leq v'\leq \bar{v}; v'\in W^P}\,\cX_{v'}^P\right).
\end{equation}
In particular, it is open in~$X^{\bar{w}_2}_{\bar{v}}(P)$.

We now analyze the fibers of~$p_2$: Let~$x\in\Image (p_2)$ and $v'$ be as above.
Then, $p_2\inv(x)$~is the set of points~$u\in U_{\bar{v}}$ such that~$u^{-1}x\in
X^{\bar{w}_1}_P$. It is the pullback of~$\cX_{v'}^P\cap X^{\bar{w}_1}_P$ by the orbit
map~$u\mapsto u^{-1}x$. Since  $\cX_{v'}^P\cap X^{\bar{w}_1}_P$~is irreducible (cf. \cite[Proposition~6.6]{Kumar:positivity}) and the
stabilizer of~$x$ in~$U_{\bar{v}}$~is, of course,  irreducible (being a closed subgroup of a finite-dimensional unipotent group), so is $p_2\inv(x)$. 
Moreover,
\begin{equation}\label{eqn6.1}
  \begin{array}{ll}
    \dim(p_2\inv(x))&=\ell(v')+\dim(\Stab_{U_{\bar{v}}}(v'P/P)) -\ell(\bar{w}_1)\\
&=\ell(\bar{v})+\dim(\Stab_{U_{\bar{v}}}(\bar{v}P/P)) -\ell(\bar{w}_1),
  \end{array}
\end{equation}
where $ \Stab_{U_{\bar{v}}}(v'P/P)$ denotes the stabilizer of~$v'P/P$ in~$U_{\bar{v}}$.

Further, by Equations~\eqref{neweqn6.1} and  \eqref{eqn6.1},
\begin{align}\label{eqn6.2}
\dim \hat\Chi &=\ell(\bar{v})+\dim(\Stab_{U_{\bar{v}}}(\bar{v}P/P)) -\ell(\bar{w}_1)+\ell(\bar{v})-\ell(\bar{w}_2)\\ \notag
&= \dim U_{\bar{v}} -1,\,\,\,\text{by the condition (ii) at the beginning of subsection ~\ref{sec:wcd}}.
\end{align}

We return to the surjective map~$q:Y'\twoheadrightarrow \hat{\Chi}$ defined above. By Chevalley's theorem (cf. [Har77, Chap. II, Exercise 3.19(b)]),
$q(Y'_+)$ contains a nonempty open subset (denoted by~$\hat{\Chi}^+$) of~$\hat{\Chi}$.
By the definition of~$ \cChi^+_1$, 
we get  the following:
\begin{equation} \label{eqn6.4}
\Tau_x(u\cX^{\bar{w}_1}_{\bar{v}}(P))\cap \Tau_x(\cX^{\bar{w}_2}_{\bar{v}}(P)) =(0),\,\,\,\text{for any~$(u,x)\in  \hat\Chi^+\subset U_{\bar{v}}\times \cX^{\bar{w}_2}_{\bar{v}}(P)$},
\end{equation}
where 
$$\cX^{\bar{w}}_{\bar{v}}(P):= \cX^{\bar{w}}_P\cap \cX^P_{\bar{v}}.$$
Observe that~$\cX^{\bar{w}_i}_{\bar{v}}(P)$~is smooth (which follows from \cite[Lemma~7.3.10]{Kumar:KacMoody}). Consider the projection map~$$p_1^+: \hat\Chi^+ \to U_{\bar{v}},\,\,\,\text{where $p_1^+:={p_1}\restrict{\hat{\Chi}^+}$}.$$
From the above Equation~\eqref{eqn6.4}, we conclude that~$$
(p_1^+)^{-1}(p_1^+(u,x))\subset \{u\}\times \left((u\cX^{\bar{w}_1}_{\bar{v}}(P))\cap \cX^{\bar{w}_2}_{\bar{v}}(P)\right)$$
 is a finite set for any~$(u,x)\in \hat{\Chi}^+$. In particular, $\hat{E}$ being irreducible,
$$\dim (\hat{E})=\dim (\overline{\Image\, p_1^+})= \dim (\hat{\Chi}^+)= \dim (\hat{\Chi})= \dim (U_{\bar{v}})-1,$$
where the last equality follows from the Equation~\eqref{eqn6.2}.
This proves that~$\hat E$~is a divisor, proving the lemma. 
\end{proof}

\begin{lemma}
  \label{lem:LsOmega}
There exist a $G$\yh-equivariant  line bundle~\mbox{$\Mi\in\Pic(\Omega)$} and
nonzero\break ${\tau\in \Ho(\Omega,\Mi)^G}$ such that~$$\Omega\cap E=\{x\in\Omega\,:\,\tau(x)=0\},
$$
where $E=E_{\bar{w}_1, \bar{w}_2, \bar{v}}$ (defined by the Identity {\rm(11)--(12)}). In fact, we can take $\Mi = ({p_3}\restrict{\Omega})^*\Li_\chi$ for a character $\chi$ of~$B$.

In particular, $E\cap\Omega$~is closed in~$\Omega$.
\end{lemma}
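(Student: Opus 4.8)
The plan is to reduce the infinite-dimensional situation to a finite-dimensional one via a slice. First I would use the Birkhoff decomposition \eqref{eqn3.2.1} to understand $\Omega$: a point $(x_1,x_2,g_3\uo)\in\Omega$ has $g_3\inv x_i\in U\uo^-$ for $i=1,2$, so $\Omega$ is a fibre bundle over $G/B$ with fibre $U\uo^-\times U\uo^-$. More precisely, the map $U\times U\times G/B\to\Omega$, $(u_1,u_2,g_3\uo)\mapsto(g_3u_1\uo^-,g_3u_2\uo^-,g_3\uo)$ exhibits $\Omega\simeq G\times^B(U\uo^-\times U\uo^-)$. Using the local triviality of $G\to G/B$ over the big cell, I would fix the base point and work over a neighbourhood of $v\inv\uo$, but the cleanest approach is to use the group $U_v=\theta(U)\subset\Aut(X_v^P)$ introduced just before the lemma and the finite-dimensional variety $U_v\uo^-\times U_v\uo^-$ (or rather $X_v^{w_1}(P)\times X_v^{w_2}(P)$ sitting inside).

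Next I would restrict everything to the slice. The key point is that $E\cap\Omega$, after passing to the slice through $v\inv\uo$ in the $G/B$ factor, becomes the pullback of $\hat E\subset U_v$ under an appropriate map. Concretely, recall $E_{w_1,w_2,v}=\{(g_1\uo^-,g_2\uo^-,g_3\uo):g_1X_P^{w_1}\cap g_2X_P^{w_2}\cap g_3X_v^P\neq\emptyset\}$ by \eqref{neweqn2.1}. On $\Omega$ we may write $g_1=g_3u_1$, $g_2=g_3u_2$ with $u_1,u_2\in U$, and then the condition becomes $g_3u_1X_P^{w_1}\cap g_3u_2X_P^{w_2}\cap g_3X_v^P\neq\emptyset$, i.e. $u_1X_P^{w_1}\cap u_2X_P^{w_2}\cap X_v^P\neq\emptyset$, equivalently $(u_2\inv u_1)X_v^{w_1}(P)\cap X_v^{w_2}(P)\neq\emptyset$, which (using $\theta$ and the fact that $u_1,u_2$ act on $X_v^P$ only through $U_v$) says exactly $\theta(u_2\inv u_1)\in\hat E$. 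Since $\hat E$ is a closed irreducible divisor of the finite-dimensional affine unipotent group $U_v$ by Lemma~\ref{lemma6}, and $U_v$ is a (finite-dimensional, hence factorial) affine space, $\hat E$ is the zero locus of a single regular function $f$ on $U_v$, well-defined up to scalar. The multiplicative character by which $U$ acts on the line bundle $\Li_\chi$ will be determined by how $f$ transforms; the whole construction being $G$-invariant forces $\Mi=({p_3}_{|\Omega})^*\Li_\chi$ for the appropriate character $\chi$ of $B$, and the section $\tau$ is manufactured from $f$ by the standard associated-bundle/invariant-section correspondence.

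Then I would check that this $f$ actually descends to a global $G$-invariant section of a $G$-equivariant line bundle on $\Omega$: the group $G$ acts on $\Omega$ and $E\cap\Omega$ is $G$-stable, so the ideal sheaf of $E\cap\Omega$ is $G$-stable; since on the slice it is principal generated by $f$, the $G$-orbit of $f$ spans a one-dimensional space (the boundary effect is absorbed into a character of the stabilizer $B$ of the base point of the $G/B$-factor), giving a $G$-linearized line bundle $\Mi$ on $\Omega$ and a section $\tau$ with zero locus $E\cap\Omega$. The "In particular" statement that $E\cap\Omega$ is closed in $\Omega$ then follows since $\tau$ is a regular section (a zero locus of a regular section of a line bundle is closed). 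The main obstacle I expect is bookkeeping the $B$-equivariance precisely: identifying exactly which character $\chi$ of $B$ appears (this requires tracking the weight of $f$ under the torus and the unipotent radical, i.e. computing how the generator of the ideal of $\hat E$ transforms under conjugation), and checking carefully that the slice-to-bundle passage is compatible with the $G$-action so that $\tau$ genuinely glues to a global invariant section rather than merely a local one. A secondary technical point is verifying that $\Omega\simeq G\times^B(\text{fibre})$ rigorously in the ind-variety setting, but this is routine given the Birkhoff decomposition and local triviality of $G\to G/B$.
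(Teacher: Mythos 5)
Your proposal is correct and follows essentially the same route as the paper: reduce to the finite-dimensional unipotent group $U_v=\theta(U)$, observe via Lemma~\ref{lemma6} that $\hat E$ is an irreducible divisor of the affine space $U_v$ and hence the zero locus of a single regular function $\hat f$, note that $T$-stability of $\hat E$ under conjugation makes $\hat f$ a $T$-eigenvector of some character $\chi$ (extended to $B$), and then descend the resulting function on the principal $B$-bundle over $\Omega$ to a $G$-invariant section of $({p_3}_{|\Omega})^*\Li_\chi$. The ``main obstacle'' you flag (pinning down $\chi$ and the equivariance of the descent) is resolved exactly as you anticipate in the paper's proof.
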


\begin{proof}
By definition,
$$
\begin{array}{rl}
  E&=\{(g_1\uo^-,g_2\uo^-,g_3\uo)\in \mathcal{X}\,:\, g_1 X_P^{\bar{w}_1}\cap g_2 X_P^{\bar{w}_2}\cap g_3 X^P_{\bar{v}}\neq\emptyset\}\text{, by (12)}\\
&=\{(g_1\uo^-,g_2\uo^-,g_3\uo)\,:\, (g_3\inv g_1 X_P^{\bar{w}_1})\cap (g_3\inv g_2 X_P^{\bar{w}_2})\cap  X^P_{\bar{v}}\neq\emptyset\}.
\end{array}
$$  
Consider the isomorphism $\iota\,:\, U\uo^-\longto U, u\uo^-\mapsto u$.
Then,
$$
\begin{array}{rl}
  E\cap\Omega&=\{(x_1,x_2,g_3\uo)\in\Omega\,:\, \iota(g_3\inv x_1) X_P^{\bar{w}_1}\cap \iota(g_3\inv x_2) X_P^{\bar{w}_2}\cap  X^P_{\bar{v}}\neq\emptyset\}\\
&=\{(x_1,x_2,g_3\uo)\in \Omega\,:\, \left(\iota(g_3\inv x_1) X^{\bar{w}_1}_{\bar{v}}(P)\right)\cap \left(\iota(g_3\inv x_2) X_{\bar{v}}^{\bar{w}_2}(P)\right)\neq\emptyset\},
\end{array}
$$
since $X^P_{\bar{v}}$~is $U$\yh-stable. Here (as earlier) $X^{\bar{w}_1}_{\bar{v}}(P) :=X_P^{\bar{w}_1}\cap X^P_{\bar{v}}$.
Thus,
\begin{equation}
  \label{eq:1}
  E\cap\Omega=\{(x_1,x_2,g_3\uo)\in \Omega\,:\,\left( [\iota(g_3\inv x_2) \inv \iota(g_3\inv x_1)] X^{\bar{w}_1}_{\bar{v}}(P)\right)\cap X_{\bar{v}}^{\bar{w}_2}(P)\neq\emptyset\}.
\end{equation}
As earlier, consider the group homomorphism
$\theta\,:\,U\longto\Aut(X^P_{\bar{v}})$ given by the action, and denote by~$U_{\bar{v}}$ its image (which is a finite-dimensional unipotent group). 
Recall (cf. (19)) that~$$
\hat E:=\{u\in U_{\bar{v}}\,:\, \left(uX^{\bar{w}_1}_{\bar{v}}(P)\right)\cap X_{\bar{v}}^{\bar{w}_2}(P)\neq\emptyset\}.
$$
Note that the torus~$T$ acts by conjugation on~$U_{\bar{v}}$ and that~$\hat E$~is $T$\yh-stable.
Being  a finite-dimensional unipotent group, $U_{\bar{v}}$~is isomorphic as a
variety to an affine space. 
In particular, there exists $\hat f\in\CC[U_{\bar{v}}]$, unique up to scalar
multiplication,  such that~$\div(\hat f)=\hat E$ (since $\hat{E}$~is an irreducible divisor by Lemma~\ref{lemma6}).
Moreover, since $\hat E$~is $T$\yh-stable, $\hat f$~is an eigenvector of~$T$; denote by~$\chi$ the corresponding character.
We extend $\chi$ uniquely to a character of~$B$.

Set~$\tilde E=\tilde{\pi}\inv(E)$ and  $\tilde \Omega :=\tilde{\pi}\inv(\Omega)$, where $\tilde{\pi}: \tilde{\mathcal{X}}:=G/B^-\times G/B^-\times G\to \mathcal{X}$~is the projection.
Then, $\tilde \Omega$ and $\tilde E$ are stable by the following action of~$G\times B$:
$$
(g,b).(x_1,x_2,g'):=(gx_1,gx_2,gg'b\inv).
$$
Consider $\tilde f\,:\,\tilde\Omega\longto\CC$ defined by~$$
\tilde f(x_1,x_2,g)=\hat f\circ\theta(\iota(g\inv x_2)\inv\iota(g\inv x_1)).
$$
Then, by the Equation~\eqref{eq:1},  $\tilde E\cap\tilde \Omega$~is
the zero locus $Z(\tilde f)$ of~$\tilde f$ and for~$b=ut\in B$ (where $u\in U, t\in T$):
\begin{equation} 
\label{eqn4.2}
\begin{array}{l@{\,}l}
\tilde f(x_1,x_2,gb) :&=\hat f\circ\theta(t\inv [\iota(g\inv
                        x_2)\inv\iota(g\inv x_1)]t)\\
&=\chi(t)\tilde f(x_1,x_2,g) = \chi(b)\tilde f(x_1,x_2,g).
\end{array}
\end{equation}
We claim that~$\tilde f$ induces a section~$\tau_{\tilde{f}}$ of~$({p_3}\restrict{\Omega})^*(\Li_\chi)$, where $p_3: \mathcal{X}\to G/B$~is the projection onto the third factor. 

By the Equation~\eqref{eqn4.2}, $\tilde{f}$ gives rise to a section~$\tau_{\tilde{f}}$ of the line bundle~$\Li_\Omega (\chi)$ associated to the principal $B$\yh-bundle $\tilde{\Omega} \to \Omega$ (induced from the right $\cdot$ action of~$B$ on~$\tilde{\Omega}$)  via the character~$\chi^{-1}$ of~$B$. Clearly,
$$\Li_\Omega (\chi) = ({p_3}\restrict{\Omega})^*(\Li_\chi).$$

By construction, the zero set~$Z(\tau_{\tilde{f}})=E\cap\Omega$. By the definition of~$\tau_{\tilde{f}}$, it is easy to see that it is a $G$\yh-invariant section. Taking $\tau=\tau_{\tilde{f}}$, we get the lemma.
\end{proof}

\bigskip
We now have a line bundle and a section~$\tau$ on~$\Omega$ with the
expected zero locus. To avoid extra zero locus in the boundary~${\mathcal X}\backslash\Omega$ we need to twist by some line bundles given by
Lemma~\ref{lem:linebundlebord}. The key point to do this is the
following  finiteness result:

\begin{lemma}
\label{lem:vFfini}
  The valuation $v_{F_{\beta,i}}(\tau)$~is finite for any~$\beta\in\Delta$ and $i=1,2$, where $\tau$~is the section taken from Lemma~\ref{lem:LsOmega}. (In the proof below we see that~$F_{\beta,i}$~is ind-irreducible.)
\end{lemma}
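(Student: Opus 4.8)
The plan is to show that the order of vanishing of $\tau$ along each boundary divisor $F_{\alpha,i}$ is finite by reducing the question to a finite-dimensional Schubert subvariety and invoking properness there. First I would recall from \eqref{eqn3.2.1} that $\mathcal{X}\setminus\Omega$ is precisely $\bigcup_{\alpha\in\Delta,\,i=1,2}F_{\alpha,i}$, so that the $F_{\alpha,i}$ are exactly the irreducible components of the boundary along which $\tau$ could have a pole; by the Birkhoff decomposition each $F_{\alpha,i}$ is ind-irreducible (as noted after \eqref{eq:defF}), hence a well-defined prime divisor, and $v_{F_{\alpha,i}}$ makes sense as a valuation on the function field of $\mathcal{X}$ restricted to any sufficiently large finite-dimensional piece.

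The key step is to exhibit, for each $(\alpha,i)$, a single finite-dimensional closed subvariety $Y_n$ of $\mathcal{X}$ that meets $F_{\alpha,i}$ in a divisor of $Y_n$ and meets $\Omega$, and on which $\tau$ restricts to a nonzero rational section of an honest line bundle. Concretely, using the filtration $\mathcal{X}=\bigcup_n\mathcal{X}_n$ by finite-dimensional closed subvarieties and the explicit description of $F_{\alpha,i}$ and $\Omega$ coming from the Birkhoff/Bruhat stratification, one picks $n$ large enough that $Y_n:=\mathcal{X}_n$ already contains a generic point of $F_{\alpha,i}$; then $v_{F_{\alpha,i}}(\tau)$ equals the order of the rational function cutting out $\tau|_{Y_n}$ along the prime divisor $F_{\alpha,i}\cap Y_n$ in the normalization of $Y_n$, and this is manifestly a finite integer because $Y_n$ is a finite-dimensional variety and $\tau|_{Y_n}$ is a nonzero section of a line bundle (nonzero because $\tau$ itself is nonzero and $Y_n$ is chosen to meet the open set $\Omega$ where $\tau$ is built). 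One has to check that this order is independent of the choice of large $n$, which follows because for $n'\ge n$ the inclusion $Y_n\hookrightarrow Y_{n'}$ is a closed embedding of the generic point of $F_{\alpha,i}$, so the local rings agree after localization; I would phrase this carefully in terms of the local ring $\mathcal{O}_{\mathcal{X},\eta}$ at the generic point $\eta$ of $F_{\alpha,i}$, which is a one-dimensional Noetherian local domain (a DVR after normalization) since it is the limit of the corresponding local rings on the $Y_n$.

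To make the reduction honest I would use the slice description already present in the construction of $\Omega$: near the generic point of $F_{\alpha,i}$ the section $\tau$ is governed, via the isomorphisms $\iota$ and $\theta$ of Lemma~\ref{lem:LsOmega} together with Lemma~\ref{lem:linebundlebord}, by the pullback of the explicit function $\hat f\in\CC[U_v]$ whose divisor is $\hat E$; since $\hat f$ lies in the coordinate ring of a finite-dimensional affine space, its order of vanishing along any prime divisor is finite, and one transports this finiteness along the (finite-dimensional, hence well-behaved) maps relating $\hat E$, $E\cap\Omega$, and the boundary stratum. The main obstacle I anticipate is the bookkeeping needed to guarantee that the finite-dimensional model $Y_n$ genuinely sees the divisor $F_{\alpha,i}$ with the same multiplicity as the ind-scheme — i.e., that taking the generic point commutes with the filtration and that $Y_n$ can be taken smooth (or at least normal) in codimension one along $F_{\alpha,i}\cap Y_n$ so that ``$v_{F_{\alpha,i}}$'' is literally a DVR valuation. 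Once that compatibility is set up, finiteness is automatic from the finite-dimensional case, and one concludes $v_{F_{\alpha,i}}(\tau)<\infty$ for all $\alpha\in\Delta$ and $i=1,2$.
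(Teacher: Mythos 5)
There is a genuine gap in your main mechanism. You propose to compute $v_{F_{\alpha,i}}(\tau)$ by restricting to a finite-dimensional closed piece $Y_n=\mathcal{X}_n$ of the filtration that ``contains a generic point of $F_{\alpha,i}$.'' No such piece exists: $F_{\alpha,i}$ is itself an infinite-dimensional ind-variety, and its generic point does not lie in any finite-dimensional closed subvariety of $\mathcal{X}$. What you actually get from the filtration is a sequence of divisors $F_{\alpha,i}\cap\mathcal{X}_n\subset\mathcal{X}_n$ together with pole orders $v_{F_{\alpha,i}\cap\mathcal{X}_n}(\tau|_{\mathcal{X}_n})$, and these could a priori be unbounded in $n$ — this is precisely the difficulty the paper flags in the introduction to the section (``the order of a pole along a divisor is not so easy to define, and even if it is defined, such an order could be infinite''). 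Your argument that the order is independent of $n$ because ``the local rings agree after localization'' does not work: the generic point of $F_{\alpha,i}\cap Y_n$ is a proper specialization of the generic point of $F_{\alpha,i}\cap Y_{n'}$ for $n'>n$, restriction to a closed subvariety can strictly increase vanishing order, and the local rings at these two different points are not related by a localization. Likewise, $\mathcal{O}_{\mathcal{X},\eta}$ is not a limit of the $\mathcal{O}_{Y_n,\eta_n}$ in any useful sense.

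The fix — which is what the paper does, and which you gesture at only vaguely in your last paragraph — is to reduce to finite dimensions \emph{transversally} rather than by closed restriction. One pulls everything back to $\tilde{\Omega}'=U\uo^-\times U\uo^-\times G$, where $\tau$ is represented by $\tilde f'(u_1\uo^-,u_2\uo^-,g)=\hat f\circ\theta(u_2^{-1}u_1)$, observes that $\tilde f'$ is invariant under $\mcU\times\mcU$ for a suitable closed normal subgroup $\mcU\subset U\cap\Ker\theta$ of finite codimension (Kumar's Lemma 6.1 in \cite{Kumar:positivity}), and descends to the smooth finite-type variety $(\mcU\backslash V^\alpha)\times(\mcU\backslash U\uo^-)$ with $V^\alpha=U\uo^-\cup Us_\alpha\uo^-$. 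There the boundary $(\mcU\backslash Us_\alpha\uo^-)\times(\mcU\backslash U\uo^-)$ is an honest divisor seen generically, and a rational function regular off a divisor on a finite-dimensional smooth variety automatically has a pole of finite order. Note also that the finiteness of the vanishing order of $\hat f$ on $U_v$ is not the issue ($\hat f$ is regular on $U_v$ and has no poles); the issue is the behaviour of the composite $\hat f\circ\theta(\iota(g^{-1}x_2)^{-1}\iota(g^{-1}x_1))$ as one approaches $F_{\alpha,i}$, and only the quotient construction makes that a finite-dimensional question.
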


\begin{proof}
We are going to prove that~$v_{F_{\beta,i}}(\tau)$ can be computed
in some finite-dimensional variety after taking a quotient by a
unipotent group. 
 
 Fix a simple root $\beta \in \Delta$ and $i=1$ and consider 
$$
F=F_{\beta, 1}=\{(x_1,x_2,g_3\uo)\in \mathcal{X}\,:\,g_3\inv x_1\in \overline{Bs_\beta \uo^-}\}.
$$ 	

Consider the isomorphism
	$$
	\varphi:\tilde{\mathcal{X}}\to \tilde{\mathcal{X}},\,\,\,\,
	(x_1,x_2,g)\mapsto (gx_1,gx_2,g).$$
	Endow $\tilde{\mathcal{X}}$ with the following two right actions of~$B$:
	$$ (x_1, x_2, g) \odot b= (b^{-1}x_1, b^{-1}x_2, gb)
	$$
	and
	$$  (x_1, x_2, g) \cdot b= (x_1, x_2, gb).
	$$
	Then, the morphism~$\varphi$~is $B$\yh-equivariant with respect
        to the action $\odot$ on the domain and the action $\cdot$ on
        the range. 

Clearly, $\tilde{\pi}: \tilde{\mathcal{X}}\to \mathcal{X}$~is a principal $B$\yh-bundle with respect to the action $\cdot$. Define
$$\tilde{\Omega}':=\varphi^{-1}(\tilde{\Omega}).$$
By the definition of~$\Omega$,
\begin{equation}\label{eqn2.3} 
\tilde{\Omega}'= U\uo^-\times U\uo^-\times G.
\end{equation}

Let~$\hat f$ and $\tilde f$ be as in the proof of
  Lemma~\ref{lem:LsOmega}. 
Set~$\tilde f'=\tilde f\circ\varphi: \tilde{\Omega}' \to \CC$. Thus, 
\begin{equation}
\label{eqn5.1}
\tilde f'(u_1\uo^-,u_2\uo^-,g)=\hat f\circ \theta(u_2\inv u_1),
\qquad \text{for} \,\,u_1,u_2\in U\,\,\text{and} \,\, g\in G.
\end{equation}
Set~$F':=(\tilde{\pi}\circ\varphi)\inv(F)=\overline{Us_\beta \uo^-}\times
G/B^-\times G$.
Consider $V^\beta:=U\uo^-\cup Us_\beta \uo^-$. It is an open
subset of~$G/B^-$ (containing $\overline{Us_\beta \uo^-}$).
By \cite[Lemma~6.1]{Kumar:positivity} and \cite[Proposition~3.2]{KS:Gpol}, there exists a closed normal subgroup~$\mcU$ of~$U$
such that~$V^\beta\longto \mcU\backslash V^\beta=:Y^\beta$~is a
principal $\mcU$\yh-bundle and $Y^\beta$~is a smooth finite-dimensional
variety. 
Moreover, by  intersecting with $\Ker\, \theta$, one can assume that~$\mcU$ acts trivially on~$X_{\bar{v}}$.

Let~$h_1, h_2\in \mcU$. We have, for any~$u_1,u_2\in U$ and $g\in G$,
\begin{align}\label{neweqn5.2}
  \tilde f'(h_1u_1\uo^-,h_2u_2\uo^-,g) & =\hat f\circ\theta(u_2\inv h_2\inv
                                h_1u_1),\,\,\,\text{by Equation~\eqref{eqn5.1}}\notag\\
&=\hat f\circ\theta(u_2\inv
                                u_1), \,\,\,\text{since $\theta$~is a group homomorphism}\notag\\
                                &\,\,\,\,\,\,\,	\text{and $h_1,h_2 \in \mcU\subset \Ker\, \theta$}\notag\\
&=\tilde f'(u_1\uo^-,u_2\uo^-,g).
\end{align}
Since the line bundle~$p_3^*(\Li_\chi)$ over~$\mathcal{X}$ pulled to the principal $B$\yh-bundle $\pi: \tilde{\mathcal{X}}\to \mathcal{X}$~is trivialized, to prove the finiteness of~$v_F(\tau)$, it suffices to show that the function ${\tilde{f}: \tilde{\Omega} \to \CC}$ has  a pole of finite order along $\pi^{-1}(F)$. 
Equivalently, considering the isomorphism ${\varphi: \tilde{\mathcal{X}}\to \tilde{\mathcal{X}}}$, it suffices to show that the function~$$\tilde{f}': \tilde{\Omega}'= U\uo^-\times U\uo^-\times G \to \CC$$ has  a pole of finite order along $F'= \overline{Us_\beta\uo^-}\times G/B^-\times G$, since $F'=\varphi^{-1}(F)$; in particular, $F$~is ind\yh-irreducible.

The diagonal action of~$G$ on~$\tilde{\mathcal{X}}$ pulled back via $\varphi$ to the action $\odot$ of~$G$ on~$\tilde{\mathcal{X}}$~is given by:
$$ g\odot (x_1, x_2, h) =  (x_1, x_2, gh), \,\,\,\text{for $x_1, x_2 \in G/B^-$ and $g,h\in G$}.$$
The function~$\tilde{f}':  U\uo^-\times U\uo^-\times G \to \CC$ descends to a function~$\hat{f}'$ on~$U\uo^-\times U\uo^-$ by Equation~\eqref{eqn5.1}. So, to prove that the function~$\tilde{f}'$~has  a pole of finite order along $F'$, it suffices to show that the function~$\hat{f}':  U\uo^-\times U\uo^- \to \CC$~has a pole of finite order along $\left(\overline{Us_\beta\uo^-}\right) \times G/B^-$. Consider the open embedding~$$\left(\mcU\backslash U\uo^-\right)\times \left(\mcU\backslash U\uo^-\right)\hookrightarrow \left(\mcU\backslash V^\beta\right) \times \left(\mcU\backslash U\uo^-\right).$$
By the Equation~\eqref{neweqn5.2}, the function~$\hat{f}'$ descends to a function~$\hat{\phi}'$ on~$\left(\mcU\backslash U\uo^-\right)\times \left(\mcU\backslash U\uo^-\right)$. Since
$ \left(\mcU\backslash V^\beta\right) \times \left(\mcU\backslash U\uo^-\right)$~is a (smooth) scheme of finite type over~$\CC$, the function~$\hat{\phi}'$~has  a pole of finite order 
along the divisor~$\left(\mcU\backslash (Us_\beta\uo^-)\right) \times \left(\mcU\backslash U\uo^-\right)$ and hence $\hat{f}'$~has a pole of finite order along the divisor~$(Us_\beta\uo^-)  \times U\uo^-$. Since $Us_\beta\uo^-$~is an open subset of~$\overline{Us_\beta\uo^-}$, we get that~$\hat{f}'$~has a pole of finite order along 
$(\overline{Us_\beta\uo^-})  \times U\underline{o}^-$.  This proves the finiteness of~$v_{F_{\beta, 1}}(\tau)$ for any~$\beta\in \Delta$. The proof of   the finiteness of~$v_{F_{\beta, 2}}(\tau)$~is identical. 
\end{proof}

\begin{proof}[Proof of Proposition~\ref{prop:linebundleSchubdiv}]
Observe that $E\ne\mathcal{X}$ by Lemma~4.5. By Lemma~\ref{lem:LsOmega}, there exist a $G$\yh-equivariant line bundle~$\Mi$ over~$\Omega$ and a nonzero $G$\yh-invariant section~$\tau$ over~$\Omega$ with its zero set $Z(\tau)= E\cap \Omega$.
Moreover, the line bundle~$\Mi$~is the
restriction of the  line bundle~$p_3^*(\Li_\chi)$ over~$\mathcal{X}$. 
Then,  $\tau$~is a (rational) section of~$\Mi':=p_3^*(\Li_\chi)$ regular over~$\Omega$.

Lemma~\ref{lem:vFfini} allows to consider the $G$\yh-linearized line bundle~$$\Li_{\bar{w}_1, \bar{w}_2, \bar{v}}:=\Mi' \otimes \left (\bigotimes_{\beta \in \Delta,
    \, i=1,2}\, \Mi_{\beta, i}^{v_{F_{\beta, i}}(\tau)}\right )\,\,\,\text{over $\mathcal{X}$},$$
where the line bundles~$ \Mi_{\beta, i}$ are as in Lemma~\ref{lem:linebundlebord}. In particular, $\Li_{\bar{w}_1,\bar{w}_2,\bar{v}}$~is of the form\break$\Li^-(\lambda_1)\boxtimes \Li^-(\lambda_2)\boxtimes \Li(\mu)$ for some~$ 
\lambda_1, \lambda_2, \mu\in \lh_\ZZ^*$. 

By Lemmas~\ref{lem:linebundlebord}, 4.6, 4.7 and the decomposition \eqref{eqn3.2.1}, it has a
nonzero $G$\yh-invariant section 
\begin{equation} \label{eqn3.2.2}\sigma_{\bar{w}_1, \bar{w}_2, \bar{v}}=\tau\otimes
  \left (\bigotimes_{\beta \in \Delta, \, i=1,2}\,\sigma_{\beta,
      i}^{v_{F_{\beta, i}}(\tau)}\right ).\end{equation}
Thus, by \cite[Corollary~8.3.12]{Kumar:KacMoody}, $(\lambda_1,
\lambda_2, \mu) \in P_+^{3}$.  This proves the proposition by using
the following Lemma~\ref{lemma3.3}.
\end{proof}

Observe that~$\overline{E\cap \Omega}\subset E$ (since $E$~is closed by Lemma~\ref{Eisclosed}). Moreover, since $E$~is irreducible and $E\cap \Omega \neq \emptyset$  (as $(\uo^-, \uo^-, \uo)\in E\cap \Omega$),
\begin{equation} \label{eqn3.2.3}   \overline{E\cap \Omega}= E.
\end{equation}
\begin{lemma} \label{lemma3.3}
 The zero set~$Z(\sigma_{\bar{w}_1, \bar{w}_2, \bar{v}}):= \{x\in \mathcal{X}: \sigma_{\bar{w}_1, \bar{w}_2, \bar{v}}(x)=0\}$~is equal to~$E$.
\end{lemma}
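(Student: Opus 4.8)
The plan is to prove $Z(\sigma_{w_1,w_2,v})=E$ by combining the explicit factorization \eqref{eqn3.2.2} of the section with the decomposition \eqref{eqn3.2.1} of $\mathcal{X}$ into $\Omega$ and the boundary divisors $F_{\alpha,i}$. First I would observe that on the open set $\Omega$ we have, by the choice of the twisting exponents, $\sigma_{w_1,w_2,v}|_\Omega = \tau\otimes\bigotimes_{\alpha,i}\sigma_{\alpha,i}^{v_{F_{\alpha,i}}(\tau)}|_\Omega$, but each $\sigma_{\alpha,i}$ is nowhere zero on $\Omega$ (since $Z(\sigma_{\alpha,i})=F_{\alpha,i}$ by Lemma~\ref{lem:linebundlebord} and $\Omega\cap F_{\alpha,i}=\emptyset$ by \eqref{eqn3.2.1}); hence $Z(\sigma_{w_1,w_2,v})\cap\Omega = Z(\tau) = E\cap\Omega$ by Lemma~\ref{lem:LsOmega}. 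This already gives $Z(\sigma_{w_1,w_2,v})\cap\Omega = E\cap\Omega$, and combined with \eqref{eqn3.2.3} it shows $\overline{Z(\sigma_{w_1,w_2,v})\cap\Omega} = E$, so $E\subset Z(\sigma_{w_1,w_2,v})$ as soon as we know $Z(\sigma_{w_1,w_2,v})$ is closed, which it is.

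The remaining task is the reverse inclusion $Z(\sigma_{w_1,w_2,v})\subset E$, equivalently to rule out any component of the zero locus lying entirely in the boundary $\mathcal{X}\setminus\Omega = \bigcup_{\alpha,i}F_{\alpha,i}$. Here is where the twisting is designed to work: $\tau$, viewed as a rational section of $\Mi'=p_3^*(\Li_\chi)$, has a pole of order exactly $v_{F_{\alpha,i}}(\tau)$ along $F_{\alpha,i}$ (finite by Lemma~\ref{lem:vFfini}), while $\sigma_{\alpha,i}$ vanishes to order exactly $1$ along $F_{\alpha,i}$ (Lemma~\ref{lem:linebundlebord}, scheme-theoretically, so the valuation is $1$) and is invertible along $F_{\beta,j}$ for $(\beta,j)\neq(\alpha,i)$. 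Therefore the product $\tau\otimes\bigotimes_{\beta,j}\sigma_{\beta,j}^{v_{F_{\beta,j}}(\tau)}$ has valuation $-v_{F_{\alpha,i}}(\tau)+v_{F_{\alpha,i}}(\tau)=0$ along each $F_{\alpha,i}$; hence the resulting regular section $\sigma_{w_1,w_2,v}$ of $\Li_{w_1,w_2,v}$ does not vanish identically on any $F_{\alpha,i}$. Since the $F_{\alpha,i}$ are the (finitely many) irreducible boundary divisors and $\mathcal{X}$ is exhausted by finite-dimensional varieties in which these are divisors, any component of $Z(\sigma_{w_1,w_2,v})$ not meeting $\Omega$ would have to be (contained in) some $F_{\alpha,i}$, which we have just excluded; hence $Z(\sigma_{w_1,w_2,v})\subset\overline{Z(\sigma_{w_1,w_2,v})\cap\Omega}=\overline{E\cap\Omega}=E$.

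The step I expect to require the most care is the valuation bookkeeping in the infinite-dimensional setting: one must make sense of ``order of vanishing/pole along $F_{\alpha,i}$'' and verify it is additive under tensor products of sections, that $v_{F_{\alpha,i}}(\sigma_{\alpha,i})=1$ while $v_{F_{\alpha,i}}(\sigma_{\beta,j})=0$ for $(\beta,j)\neq(\alpha,i)$, and that $\sigma_{\alpha,i}$ being nowhere zero on $\Omega$ forces its full zero scheme to be $F_{\alpha,i}$ with multiplicity one. All of these can be checked after restricting to a finite-dimensional slice as in the proof of Lemma~\ref{lem:vFfini} (quotienting by a suitable normal subgroup $\mcU$ of $U$), where the usual theory of Weil divisors applies; the point is simply to confirm that the exponents chosen in the definition of $\Li_{w_1,w_2,v}$ exactly cancel the poles of $\tau$ without introducing new zeros along the boundary. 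Once this is in place, the two inclusions combine to give $Z(\sigma_{w_1,w_2,v})=E$, completing the proof of the lemma and, with it, Proposition~\ref{prop:linebundleSchubdiv}.
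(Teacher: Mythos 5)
Your proposal is correct and follows essentially the same route as the paper: both prove $Z\cap\Omega=E\cap\Omega$ (hence $E\subset Z$ via \eqref{eqn3.2.3}), and both rule out boundary components by observing that the exponents $v_{F_{\alpha,i}}(\tau)$ were chosen to make the valuation of $\sigma_{w_1,w_2,v}$ along each irreducible $F_{\alpha,i}$ exactly zero, then invoking purity of the zero divisor on a finite-dimensional exhaustion to force any component inside the boundary to equal some $F_{\alpha,i}$, which is excluded. The paper carries out the exhaustion on the pullback $\tilde{\mathcal{X}}=(G/B^-)^2\times G$ (where irreducibility of $F_{\alpha,i}\cap\tilde{\mathcal{X}}_n$ is checked via \eqref{eqn3.4.1}), but this is only a technical variant of your slice-and-quotient bookkeeping.
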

\begin{proof} Consider the map~$$\psi: \tilde{\mathcal{X}}:=(G/B^-)^2\times G \to  \mathcal{X}:=(G/B^-)^2\times G/B, \,\,\,(x_1, x_2, g) \mapsto (gx_1, gx_2, g\uo).$$
For any subset~$Y\subset \mathcal{X}$, we set~$\hat{Y}':= \psi^{-1}(Y)$. Then,
$$\hat{F}_{\beta,1}'= \overline{Bs_\beta\uo^-}\times G/B^-\times G.$$
Take an increasing cofinal sequence~$w_n\in W$ (i.e., $\bar{w}_1< \bar{w}_2< w_3
<\cdots$ and for any~$w\in W$ there exists a $w_n$ such that~$w\leq
w_n$). Take  a filtration $(G_n)_{n \geq 0}$ of~$G$ by finite-dimensional irreducible subvarieties compatible with its ind-variety
structure (cf. \cite[above Lemma~2.3]{R:KM1}). Now, define the increasing filtration
$$\tilde{\mathcal{X}}_n:= X_{w_n}^-\times  X_{w_n}^-\times G_n\,\,\, \text{of $\tilde{\mathcal{X}}$, where $X_w^-:=\overline{B^-w\uo^-}$}.$$
Then, 
\begin{equation} \label{eqn3.4.1} \tilde{\mathcal{X}}_n\cap \hat{F}_{\beta,1}'= ( X_{w_n}^-\cap \overline{Bs_\beta\uo^-})
 \times  X_{w_n}^-\times G_n,
\end{equation}
and a similar expression for~$\tilde{\mathcal{X}}_n\cap \hat{F}_{\beta,2}'$. Thus, $\tilde{\mathcal{X}}_n\cap \hat{F}_{\beta,i}'$~is irreducible. Abbreviate\break
${Z=Z(\sigma_{\bar{w}_1, \bar{w}_2, \bar{v}})}$. Then,  by Lemmas~\ref{lem:linebundlebord} and \ref{lem:LsOmega} and the identity \eqref{eqn3.2.1}, $Z\cap \Omega =E\cap \Omega$ and hence $Z\supset E $ by the identity \eqref{eqn3.2.3}. Write
$$\hat{Z}'=\hat{E}'\cup \left (\bigcup_{(\beta, i)\in \Delta\times \{1,2\}}(\hat{Z}'\cap \hat{F}_{\beta, i}')\right),\,\,\,\text{by the identity  \eqref{eqn3.2.1}}.$$
Thus, for any~$n\geq 0$, 
\begin{equation}\label{eqn3.4.2}\hat{Z}'\cap \tilde{\mathcal{X}}_n
  =(\hat{E}' \cap\tilde{\mathcal{X}}_n)\cup \left (\bigcup_{(\beta,
      i)\in \Delta\times \{1,2\}}
(\hat{Z}'\cap \hat{F}_{\beta, i}'\cap  \tilde{\mathcal{X}}_n)\right).
\end{equation}
But, being the zero set of a section of  a line bundle, $\hat{Z}'\cap \tilde{\mathcal{X}}_n $~is a divisor in~$\tilde{\mathcal{X}}_n$ for large~$n$ and so is  
$ \hat{F}_{\beta, i}'\cap  \tilde{\mathcal{X}}_n$ and the latter is irreducible (divisor of~$ \tilde{\mathcal{X}}_n$) by the Equation~\eqref{eqn3.4.1}. From the definition of~$\sigma$ given by the Equation~\eqref{eqn3.2.2}, we get (for any~$(\beta, i)\in \Delta\times \{1,2\}$)
\begin{equation}\label{eqn3.4.3}\hat{Z}'\cap \hat{F}_{\beta, i}'\cap  \tilde{\mathcal{X}}_n \subsetneq \hat{F}_{\beta, i}'\cap  \tilde{\mathcal{X}}_n, \,\,\,\text{for large enough $n$}.
\end{equation}
Thus, $\hat{Z}'\cap \hat{F}_{\beta, i}'\cap  \tilde{\mathcal{X}}_n$  is of codimension~$\geq 2$ in~$\tilde{\mathcal{X}}_n$ for large enough $n$. But, since  $\hat{Z}'\cap \tilde{\mathcal{X}}_n $~is a divisor in~$\tilde{\mathcal{X}}_n$, we get from the Equation~\eqref{eqn3.4.2} that~$$\hat{Z}'\cap \hat{F}_{\beta, i}'\cap  \tilde{\mathcal{X}}_n \subset \hat{E}'\cap  \tilde{\mathcal{X}}_n, \,\,\,\text{for large enough $n$}.$$
Thus, 
$$\hat{Z}'\cap  \tilde{\mathcal{X}}_n = \hat{E}'\cap  \tilde{\mathcal{X}}_n, \,\,\,\text{for large enough $n$ which gives $\hat{Z}'=\hat{E}'$}.$$
Hence, $Z=E$ proving the lemma. 
\end{proof}

Combining Lemmas~\ref{lem:linebundlebord}, \ref{typealemma} and \ref{typeblemma} and Proposition \ref{prop:linebundleSchubdiv}, we get the following.

\begin{CORO} \label{coro6.6.1} For any $(\alpha, i) \in \mcD$, there exists a $G$-linearized line bundle $\Ni_{\alpha,i}$ of the form $\Li^-(\lambda_1)\otimes\Li^-(\lambda_2)\otimes\Li(\mu)$, for some $\lambda_1, \lambda_2, \mu\in P^3_+$, together with a nonzero $G$-invariant section $\mu_{\alpha, i}$  of  $\Ni_{\alpha,i}$ such that the zero set 
$$Z(\mu_{\alpha, i})= E_{\alpha, i},\,\,\text{where $E_{\alpha, i}:= E_{\bar{w}_1, \bar{w}_2, \bar{v}}$}.$$
\end{CORO}
\begin{remark} Observe that for a descent $(\alpha, 3)$ of type $B$, there exists exactly one $1\leq j\leq 2$ such that $w_j=\bar{w}_j\not\leq \bar{v}$. To show this, assume for contradiction, that $w_i\not\leq \bar{v}$ for both $i=1,2$.Then, by Lemma \ref{typeblemma}, $E_{\alpha, 3}= F_{\alpha, 1}= F_{\alpha, 2}$. This is a contradiction since $F_{\alpha, 1}\neq F_{\alpha, 2}$. This proves the claim.
\end{remark}

 \section{Proof of Theorem~\ref{th:restCisom}}
\label{sec:pfrestCisom}

{\it In this section,  we fix $P$, $(w_1,w_2,v)$ and $\Li$ as in the theorem. }

As earlier, let~$\mcD$ denote the set of descents~$(\alpha,i)\in\Delta\times
\{1,2,3\}$ coming from~$\Delta^+(w_1)$, $\Delta^+(w_2)$ and
$\Delta^-(v)$, i.e., 
$$\mcD \cap (\Delta\times \{i\}) = \Delta^+(w_i)\,\,\,\text{for $i=1,2$
  and \, $\mcD \cap (\Delta\times \{3\}) = \Delta^-(v)$},
$$
where $ \Delta^+(w_i)$ and $\Delta^-(v)$ are defined in the Introduction.
\subsection{Strategy}\label{subsec4.1}

 We set
\begin{align*}
C&=Lw_1\inv \uo^-\times Lw_2\inv \uo^-\times Lv\inv \uo,\\
C^+&=Pw_1\inv \uo^-\times Pw_2\inv \uo^-\times Pv\inv \uo,\\
\intertext{and (as earlier)}
{\bar C}^+&={\bar C}^+_{w_1, w_2, v}:=\overline{Pw_1\inv \uo^-}\times \overline{Pw_2\inv \uo^-}\times \overline{Pv\inv \uo}.
\end{align*}
Recall from Equation~\eqref{eqn4.1}:
$$
\begin{array}{ll}
  \Chi&:=\{(gP/P,x)\in G/P\times \X\,:\, g\inv x\in \bar C^+\}\\
&=\{(y,g_1\uo^-,g_2\uo^-,g_3\uo)\in G/P\times \X\,:\,y\in
  g_1X_P^{w_1}\cap g_2X^{w_2}_P\cap g_3X_{v}^P\}. 
\end{array}
$$
As a closed subset of~$G/P\times \X$, it is a $G$\yh-ind-variety with the diagonal action of~$G$.
Consider the projection~$$
  \eta:\Chi \to \X, \,\,\,
(y,x)\mapsto x.
$$ 
For each~$(\alpha,i)\in \mcD$, consider the associated $P^3$\yh-orbit
$\partial C^+_{\alpha,i}$ in~$\X$, where   $$\partial
C^+_{\alpha,1} :=Pw_1\inv s_\alpha \uo^-\times Pw_2\inv \uo^-\times
Pv\inv \uo$$ and   $\partial
C^+_{\alpha,i}$ ($i=2,3$) are defined similarly. Then,  $\partial
C^+_{\alpha,i}$~is open in an irreducible component of~$\bar
C^+\backslash C^+$. 
Set~$$
{\tilde C}^+=\tilde{Y}^{w_1}\times\tilde{Y}^{w_2}\times\tilde{Y}_v,
$$
where
\begin{align*}
\tilde{Y}^{w_i}&:=(Pw_i^{-1}\underline{o}^-)\cup\left(\bigcup_{(\alpha,i)\in\mathcal{D}}Pw_i^{-1}s_{\alpha}\underline{o}^-\right)\\
\intertext{and}
\tilde{Y}_{v}&:=(Pv^{-1}\underline{o})\cup\left(\bigcup_{(\alpha,3)\in\mathcal{D}}Pv^{-1}s_{\alpha}\underline{o}\right).
\end{align*}
It is open in~${\bar C}^+$.
Similarly,  we define the open subset of~$X_P^{w_i}$:
$${\tilde
  X}^{w_i}_P
:=(B^-w_iP/P)\cup\left(\bigcup_{\alpha\in\Delta^+(w_i)}B^-s_\alpha w_iP/P\right)
\,\,\, (\text{for } i=1,2)
$$ 
and the open subset of~$X_v^P$:
$$
{\tilde
  X}_{v}^P:=(BvP/P)\cup\left(\bigcup_{\alpha\in\Delta^-(v)}Bs_\alpha vP/P\right).
$$
We  also set~$$
\begin{array}{ll}
  \tilde\Chi' &:=\{(gP/P,x)\in G/P\times \X\,:\, g\inv x\in {\tilde C}^+\}\\
&=\{(y,g_1\uo^-,g_2\uo^-,g_3\uo)\in G/P\times \X\,:\,y\in
  g_1\tilde X_P^{w_1}\cap g_2\tilde X^{w_2}_P\cap g_3\tilde X_{v}^P\},
\end{array}
$$
which is an open subset of~$\Chi$ and hence ind\yh-irreducible (since so is $\Chi$ as observed earlier below the Equation~\eqref{eqn4.1}). 
We make use of a slice by setting
$$
\X_\bs :=(G/B^-)^2\times\{\uo\}\subset \X,$$
and its $B$\yh-stable open subset~$$ \ccdX_\bs :=(B\uo^-\cup \bigcup_{\alpha\in\Delta} s_\alpha B\uo^-)^2\times\{\uo\} = \left(\bigcup_{\ell(w)\leq 1}  Bw\uo^-\right)^2\times\{\uo\}.
$$
Then, we have a $G$\yh-equivariant isomorphism:
\begin{equation} \label{eqn6.1.1}
G\times^B\X_\bs\simeq \X,\,\,\,[g, x]\mapsto gx.
\end{equation}
As defined in the proof of Lemma~\ref{Eisclosed}, 
$$
\Chi_\bs :=\{(y,g_1\uo^-,g_2\uo^-, \uo)\in G/P \times  \X_\bs\,:\,y\in
  g_1X_P^{w_1}\cap g_2X^{w_2}_P\cap X_v^P\} \subset \Chi . $$
We also set~$$
\ccChi_\bs :=\Chi_\bs\cap (G/P \times \ccdX_\bs)
$$
and
$$
\tilde \Chi_\bs :=\{(y,g_1\uo^-,g_2\uo^-, \uo)\in G/P\times \X_\bs\,:\,y\in
  g_1\tilde X_P^{w_1}\cap g_2 \tilde X_P^{w_2}\cap  \tilde X_v^P\}.
$$
Then,
\begin{equation} \label{eqn6.1.2}
G\times^B\Chi_\bs\simeq \Chi,\,\,\,[g, x]\mapsto gx.
\end{equation}
In particular, $\Chi_\bs$~is irreducible since so is $\Chi$. Hence, $\tilde \Chi_\bs$ and $\ccChi_\bs $  (being  open subsets of~$\Chi_\bs$) are irreducible.

We now consider the following commutative diagram $(\diamond)$ for any~$G$\yh-equivariant line bundle~$\Li$ over~$\X$ as in Theorem~\ref{th:restCisom}:

\begin{center}
\begin{tikzpicture}
\matrix (m) [matrix of math nodes,row sep=1.2cm,column sep=0.9cm] {
\Ho(\X,\Li)^G&\Ho(\Chi,\Li)^G&\Ho(G\times^P {\tilde C}^+,\Li)^G&\Ho(C,\Li)^L\\
\Ho(\X_\bs,\Li)^B&\Ho(\Chi_\bs,\Li)^B&\Ho(\tilde\Chi_\bs,\Li)^B\\
\Ho(\ccdX_\bs,\Li)^B&\Ho(\ccChi_\bs,\Li)^B&\Ho(\tilde\Chi_\bs\cap \ccChi_\bs,\Li)^B,\\
};
%%% Fleches horizontale ligne 1
\path [->]     (m-1-1) edge node[above]     {$\eta^*$} (m-1-2);
\path [->]     (m-1-2) edge node[above]     {$\alpha^*$}(m-1-3);
\path [->]     (m-1-3) edge node[above]     {$\beta^*$}(m-1-4);
%%% Fleches horizontale ligne 2
\path [->]     (m-2-1) edge node[above]     {$\eta_1^*$} (m-2-2);
\path [->]     (m-2-2) edge node[above]     {$i_3^*$} (m-2-3);
%%% Fleches horizontale ligne 3
\path [->]     (m-3-1) edge node[above]     {$\eta_2^*$} (m-3-2);\path [->]     (m-3-2) edge node[above]     {$i_7^*$} (m-3-3);
%%% Fleches verticales
\path [->]     (m-1-1) edge node[left]     {$i_1^*$}  (m-2-1);
\path [->]     (m-1-2) edge node[left]     {$i_2^*$}  (m-2-2);
\path [->]     (m-2-1) edge node[left]     {$i_4^*$}  (m-3-1);
\path [->]     (m-2-2) edge node[left]     {$i_5^*$}  (m-3-2);
\path [->]     (m-1-3) edge node[right]     {$\gamma^*$}  (m-2-3);
\path [->]     (m-2-3) edge node[right]     {$i_6^*$}  (m-3-3);
\end{tikzpicture} 
\end{center} 
where $$\alpha: G\times^P \tilde{C}^+ \to \Chi,\,\,\,[g, (x_1, x_2, x_3)]\mapsto (gP, gx_1, gx_2, gx_3)$$
is a $G$\yh-equivariant  open embedding with image $\tilde\Chi'$,
\begin{align*}
\beta: C&\hookrightarrow  G\times^P \tilde{C}^+ \,\,\,\text{is the $L$\yh-equivariant morphism~$x \mapsto [1, x]$},\\
\gamma\,: \tilde \Chi_\bs &\longto  G\times^P \tilde{C}^+, \,\,\,
(gP, g_1\uo^-, g_2\uo^-, \uo) \mapsto [g,
      (g^{-1}g_1\uo^-, g^{-1}g_2\uo^-, g^{-1}\uo) ],
\end{align*}
is the morphism
(which is $\alpha^{-1}\restrict{\tilde\Chi_\bs}$),
$\eta_1, \eta_2$ are restrictions of~$\eta$ to~$\Chi_\bs$ and $\ccChi_\bs$ respectively. All the maps~$i_j$ are appropriate inclusion maps. 
In the above diagram $\Li$ also denotes the  induced line
bundle on each of the above   ind-varieties by pullback. 
Note that the ind-varieties with $\bs$ as subscript are 
$B$\yh-ind-varieties with the $B$\yh-action induced from the $G$\yh-action of the ambient $G$\yh-ind-varieties; in particular, the line bundle~$\Li$ over them  is
endowed with a natural $B$\yh-action. 
 
We now  prove that all the maps in the above commutative
diagram are isomorphisms.

\subsection{Various isomorphisms}\label{subsec4.2} We first prove the following lemma for its use in the proof of Lemma~\ref{lemma6.2.1}.
\begin{lemma}\label{lemma6.2.0} Let $U_P$~be the unipotent radical of~$P$. Then,

{\rm(a)} Any regular map~$U_P\to \mathbb{C}^*$~is constant.

{\rm(b)} $\Pic (U_P)= (0)$.
\end{lemma}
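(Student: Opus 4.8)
Both assertions concern the unipotent radical $U_P$ of the parabolic $P$ in the (minimal) Kac-Moody group $G$. The basic structural fact I would invoke is that, even in the Kac-Moody setting, $U_P$ is a (pro-)unipotent group admitting a filtration by normal subgroups with successive quotients isomorphic (as varieties-with-group-structure) to finite-dimensional unipotent groups, and in fact $U_P$ itself is isomorphic as an ind-variety to an ind-affine space $\varinjlim_n \AA^{N_n}$; this is the analogue for $U_P$ of the description used already in the paper for the groups $U$ and $U_v$ (e.g. in the proof of Lemma~\ref{lem:vFfini}, where the finite-dimensional unipotent quotients $\mcU\backslash V^\alpha$ are used). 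Concretely, $U_P$ is the product (in any fixed order) of the root subgroups $U_\beta$ for $\beta$ a positive real root not in the Levi, together with the ``imaginary'' part coming from $\lh^*$-grading; each $U_\beta\simeq \AA^1$ and the multiplication/coordinate maps are morphisms, so $U_P$ is covered by finite-dimensional closed subvarieties each isomorphic to an affine space.

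\textbf{Part (a).} The plan is to reduce to the finite-dimensional case. A regular map $f: U_P\to \CC^*$ restricts, on each finite-dimensional closed subvariety $Z_n\subset U_P$ of the ind-structure, to a regular map $Z_n\to\CC^*$. Since each $Z_n$ is (isomorphic to) an affine space $\AA^{N_n}$, which is a unique factorization domain with unit group $\CC^*$, every invertible regular function on $Z_n$ is a nonzero constant. As the $Z_n$ are nested and cover $U_P$ and all contain the identity element $e$ (we may arrange this), the constants agree, so $f\equiv f(e)$. Hence $f$ is constant, which is (a). The only thing to check carefully is that the ind-variety structure on $U_P$ really is by affine spaces (or at least by varieties with trivial unit group and trivial Picard group); I would cite the relevant structure theory of Kac-Moody groups for this (e.g.\ \cite{Kumar:KacMoody}, the discussion of the pro-group structure of $U$ and its subgroups), exactly as the analogous facts for $U$ and $V^\alpha$ were used earlier via \cite{Kumar:positivity}.

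\textbf{Part (b).} Again reduce to the finite-dimensional pieces: $\Pic(U_P)=\varprojlim_n \Pic(Z_n)$ under restriction (a line bundle on the ind-variety $U_P$ is by definition a compatible system of line bundles on the $Z_n$). Since each $Z_n\simeq\AA^{N_n}$ is a smooth affine variety with $\Pic(\AA^{N_n})=(0)$, the inverse limit is $(0)$, so $\Pic(U_P)=(0)$.

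\textbf{Main obstacle.} The essential point — and the only place where genuine Kac-Moody input is needed — is establishing that $U_P$ carries an ind-structure whose finite-dimensional terms are affine spaces (hence factorial, with $\Pic=0$ and unit group $\CC^*$). Everything else is then the formal reduction above. I would therefore spend the bulk of the proof (or, more likely, a sentence plus a citation) on this structural claim, and the rest follows immediately; this mirrors how the paper has handled the groups $U$, $\mcU$, $U_v$, and $V^\alpha$ in the preceding sections.
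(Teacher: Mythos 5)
The entire weight of your argument rests on the structural claim that $U_P$ admits a filtration by finite-dimensional \emph{closed} subvarieties isomorphic to affine spaces, and this claim is neither proved nor referenced, and is not the standard structure one has: viewing $U_P$ as the big cell in $G/P^-$, the natural closed finite-dimensional pieces of its ind-structure are the intersections $X_w^-\cap U_P$ with the (opposite) Schubert varieties $X_w^-=\overline{B^-wP^-/P^-}$, and these are in general \emph{singular} quasi-projective varieties (the base point, the most singular point of $X_w^-$, lies in the big cell), hence certainly not affine spaces. For part (a) your argument can be repaired without this claim: since $U_P$ is generated by real root subgroups, every point lies in the image of a morphism $\AA^n\to U_P$ (a product of root subgroups through $e$), and pulling back an invertible regular function to $\AA^n$ forces it to be constant; this gives a genuinely more elementary proof of (a) than the paper's, which instead uses the contractibility of $X_w^-\cap U_P$ (Kumar, Proposition 7.4.17) together with the fact that a contractible variety admits no nonconstant regular map to $\CC^*$ (\cite[Lemma 2.5]{KNR}).

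For part (b), however, the gap is fatal as written: $\Pic(U_P)$ is the inverse limit of $\Pic$ over the actual closed pieces of the ind-structure, and a line bundle whose pullback under some map $\AA^n\to U_P$ is trivial need not be trivial on $X_w^-\cap U_P$; so you really do need $\Pic(X_w^-\cap U_P)=(0)$ for these singular varieties, and "it is an affine space" is not available. The paper's proof of this is substantially more involved: an induction on $\ell(w)$ using the exact sequences for Chow groups from \cite{Fulton:IT} shows that $\Pic(X_w^-\cap U_P)$ is a finitely generated abelian group, while the Kummer sheaf sequence combined with the contractibility of $X_w^-\cap U_P$ (so that $\Hd^1$ and $\Hd^2$ with $\ZZ_m$-coefficients vanish) shows it is divisible; a finitely generated divisible group is trivial. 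You would need to supply an argument of comparable substance (or an actual proof that $U_P$ is an ind-affine space with closed affine pieces, which is not in the literature you gesture at) for part (b) to stand.
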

\begin{proof} (a) Consider the parabolic subgroup~$P^-$ opposite to~$P$ and the homogeneous space~$G/P^-$. Then $U_P$ can be seen as an
  open subset of~$G/P^-$. 
For any Schubert variety~$X_w^-=X_w^-(P):=\overline{B^-wP^-/P^-}\subset G/P^-$ (with
$w\in W^P$), 
$X_w^-\cap U_P$~is contractible in the analytic topology (cf. \cite[Proposition~7.4.17 and its proof]{Kumar:KacMoody}). Now, by \cite[Lemma~2.5]{KNR}, we get that any regular map~$X_w^-\cap U_P\to \mathbb{C}^*$~is a constant. From this (a) follows.

(b) By induction on~$\ell(w)$, we show that the group of~$k$\yh-cycles modulo rational equivalence  $A_k(X_w^-\cap U_P)$~is a finitely generated group. 
By \cite[Proposition~1.8]{Fulton:IT}, we have an exact sequence:
\[A_k((\partial X_w^-)\cap U_P)\to A_k(X_w^-\cap U_P)\to A_k\left((B^-wP^-/P^-)\cap U_P\right)\to 0.\]
Writing $\partial X_w^-$ as a union  $\bigcup_{\ell(v)=\ell(w)-1}\,X_v^-$ and applying \cite[Example~1.3.1(c)]{Fulton:IT} and the induction hypothesis, we get that~$A_k(\partial X_w^-\cap U_P)$~is finitely generated. Also, applying  \cite[Proposition~1.8]{Fulton:IT} again to the open subset~$(B^-wP^-/P^-)\cap U_P$ of the affine space~$B^-wP^-/P^-$, we get that~$A_k\left((B^-wP^-/P^-)\cap U_P\right)$~is finitely generated since so is $A_k(B^-wP^-/P^-)$ (cf. \cite[Proposition~1.9]{Fulton:IT}). Thus, from the above exact sequence, we get that~$A_k(X_w^-\cap U_P)$~is finitely generated, completing the induction. 

Consider the cohomology exact sequence (since $X_w^-\cap U_P$~is contractible in the analytic topology)
\begin{align*} \Hd^1(X_w^-\cap U_P, \mathbb{Z}_m)=0 &\to \Hd^1(X_w^-\cap U_P, \mathscr{O}^*)
=\Pic(X_w^-\cap U_P)  \\
&\to \Hd^1(X_w^-\cap U_P,  \mathscr{O}^*)
=\Pic(X_w^-\cap U_P) \to \Hd^2(X_w^-\cap U_P, \mathbb{Z}_m)=0,
\end{align*}
 induced from the sheaf exact sequence:
\[\mathbb{Z}_m\to  \mathscr{O}^*\to  \mathscr{O}^*\to 0,\]
where the map~$\mathscr{O}^*\to  \mathscr{O}^*$ takes $f\mapsto f^m$.
From the above cohomology exact sequence we see that~$\Pic(X_w^-\cap
U_P)$~is a divisible group. But, since it is also a finitely generated
abelian group (by \cite[Example~2.1.1]{Fulton:IT}), it must be trivial. From this, taking limit,  we obtain (b).
\end{proof}

Since $\Chi$~is irreducible and $\Image\, \alpha=\tilde\Chi'$~is open in~$\Chi$, the
restriction map~$$
 \Ho(\Chi,\Li)\longto \Ho(G\times^P {\tilde C}^+,\Li)\,\,\,\text{is injective and hence so is $\alpha^*$.}
$$
\begin{lemma} \label{lemma6.2.1}
 {\rm(a)} The pullback induces an isomorphism:
$$
\eta^*:\Ho(\X,\Li)^G\simeq\Ho(\Chi,\Li)^G.
$$

{\rm(b)} The restriction map~$$
\Ho({\tilde C}^+,\Li)^P\longto \Ho( C^+,\Li)^P
$$
is an isomorphism.

{\rm(c)}  The restriction map~$$
 \Ho( C^+,\Li)^P \to  \Ho( C,\Li)^L 
$$
is an isomorphism.

\end{lemma}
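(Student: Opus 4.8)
\emph{Overview of the plan.} The three parts are proved separately and in the order (a), (c), (b): part (a) is a statement of birational geometry on $\X$; part (c) concerns only the fibration of $C^+$ over $C$ coming from the Levi decomposition $P=U_P\rtimes L$; and part (b) is the one place where the nonnegativity of $\Li$ on the curves $\bl_{\alpha,i}$ with $(\alpha,i)\in\mcD$ really intervenes. The recurring difficulty throughout is that everything lives on ind-varieties, so one must reduce to finite-dimensional (or to pro-unipotent) situations where the classical statements apply.

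\emph{Proof of (a).} The plan is to show that $\eta\colon\Chi\to\X$ is an ind-proper birational morphism onto the normal ind-variety $\X$, and then to invoke Zariski's main theorem in the form of Proposition~\ref{prop:Zariski}. Ind-properness: over a finite-dimensional piece $\X_n$ of a filtration of $\X$ the $G/P$-coordinate of a point of $\eta^{-1}(\X_n)$ stays within a fixed (projective) Schubert variety of $G/P$, so $\eta^{-1}(\X_n)$ is closed in a product of a projective variety with $\X_n$ and $\eta^{-1}(\X_n)\to\X_n$ is proper. Dominance and birationality: for a configuration in general position the intersection $g_1X_P^{w_1}\cap g_2X_P^{w_2}\cap g_3X_v^P$ is transverse and consists of $c^{v}_{w_1,w_2}$ reduced points, and the hypothesis that $\epsilon_P^v$ occurs with coefficient $1$ in $\epsilon_P^{w_1}\odot_0\epsilon_P^{w_2}$ forces $c^{v}_{w_1,w_2}=1$, since the coefficient of $\epsilon_P^v$ in the deformed product is either $0$ or equals $c^{v}_{w_1,w_2}$; together with the dimension equality $\dim\Chi=\dim\X$ (equivalent to $\ell(v)=\ell(w_1)+\ell(w_2)$, equivalently to $E_{w_1,w_2,v}=\X$ in view of \eqref{neweqn2.1} and Lemma~\ref{Eisclosed}), this says that the irreducible ind-variety $\Chi$ maps birationally onto $\X$. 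Since $\X=(G/B^-)^2\times G/B$ is smooth, hence normal, applying Proposition~\ref{prop:Zariski} levelwise to compatible exhaustions $\X=\bigcup\X_n$, $\Chi=\bigcup\eta^{-1}(\X_n)$ — as in the proof of Lemma~\ref{lemma3.3} — gives that $\eta^*\colon\Ho(\X,\Li)\to\Ho(\Chi,\eta^*\Li)$ is an isomorphism; taking $G$-invariants proves (a). The main obstacle here is the ind-scheme bookkeeping, namely arranging the exhaustions so that each $\eta^{-1}(\X_n)\to\X_n$ is a proper birational morphism onto a normal variety; the birationality of $\eta$ itself I would quote from the theory of the deformed product (\cite{BK}, \cite{Kumar:CDSWconj}).

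\emph{Proof of (c).} Write $P=U_P\rtimes L$ with $U_P$ the (pro-)unipotent radical. Choose a one-parameter subgroup $\tau$ of the centre of $L$ that is positive on the roots of $U_P$; then each factor $Pw\inv\uo^-$ (resp.\ $Pv\inv\uo$) of $C^+$ is the attracting set of the corresponding factor $Lw\inv\uo^-$ (resp.\ $Lv\inv\uo$) of $C$, so there is a $P$-equivariant contraction $p\colon C^+\to C$ — equivariant for the $L$-action on $C$ induced by $P\twoheadrightarrow L$ — whose zero-section is the inclusion $C\hookrightarrow C^+$ and whose fibres are affine spaces. By Lemma~\ref{lemma6.2.0}(b), line bundles on these fibres are trivial, so $p^*\colon\Pic(C)\to\Pic(C^+)$ is an isomorphism and $\Li|_{C^+}=p^*(\Li|_C)$; in particular $p^*$ carries $\Ho(C,\Li)^L$ into $\Ho(C^+,\Li)^P$, and restriction back to $C$ is a left inverse, so this map is injective. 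For surjectivity one restricts a $P$-invariant section $s$ to a fibre $p\inv(c)$: there it becomes a $T_c$-semiinvariant regular function on an affine space with trivial bundle, whose $T$-weight is the weight of $\Li$ at the relevant fixed point; the equalities $(I^j_{(w_1,w_2,v)})$ pin this weight down along exactly the non-Levi directions, which are the coordinate directions of the fibre, forcing the function to be constant — here one uses Lemma~\ref{lemma6.2.0}(a) to rule out nonconstant units once the affine space is infinite-dimensional — so $s=p^*(s|_C)$. This is the step I expect to be the most delicate: the interaction between the equalities $(I^j_{(w_1,w_2,v)})$ and the infinite-dimensional $U_P$-directions has to be handled with care, and it is precisely to make the relevant $\Pic$- and unit-statements available for the pro-unipotent $U_P$ that Lemma~\ref{lemma6.2.0} was proved.

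\emph{Proof of (b).} Since $C^+$ is open and dense in $\tilde C^+$, restriction is injective, so the content is surjectivity: a $P$-invariant section $s$ of $\Li$ on $C^+$ must extend regularly across each boundary divisor $\partial C^+_{\alpha,i}$, $(\alpha,i)\in\mcD$. Each $\partial C^+_{\alpha,i}$ is a single $P^3$-orbit, so $s$ has a pole along it if and only if $s$ has a pole at the $T$-fixed point $x_{\alpha,i}$, and the order of that pole can be read off by restricting $s$ to the $T$-stable $\PP^1$-curve $\bl_{\alpha,i}$, which contains $x_0\in C^+$ and meets $\partial C^+_{\alpha,i}$ transversally at $x_{\alpha,i}$. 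On $\bl_{\alpha,i}$ one gets a $T$-eigensection of $\Li|_{\bl_{\alpha,i}}\cong\mathcal O_{\PP^1}(d_{\alpha,i})$ regular on $\bl_{\alpha,i}\setminus\{x_{\alpha,i}\}$; its $T$-weight at $x_0$ is the weight of $\Li$ at $x_0$, which by the equalities $(I^j_{(w_1,w_2,v)})$ is compatible with the tangent weight of $\bl_{\alpha,i}$, and together with the hypothesis $d_{\alpha,i}=\deg(\Li|_{\bl_{\alpha,i}})\geq0$ this forces the eigensection, hence $s$, to be regular at $x_{\alpha,i}$. Iterating over the finitely many $(\alpha,i)\in\mcD$ extends $s$ to all of $\tilde C^+$. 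The hard part of (b), and the point where the technique of Lemma~\ref{lem:vFfini} is indispensable, is to give precise meaning to ``pole along $\partial C^+_{\alpha,i}$'' and ``order of pole'' in the infinite-dimensional setting and to relate this order to $\deg(\Li|_{\bl_{\alpha,i}})$ without appealing to a possibly ill-defined (or infinite) global valuation — which I would do by descending, exactly as there, to a finite-dimensional quotient of a suitable open subset by a large closed unipotent subgroup on which everything is classical.
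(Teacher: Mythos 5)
Your overall strategy coincides with the paper's: part (a) is exactly the Zariski-main-theorem argument that the paper outsources to \cite[Lemma~6.3]{R:KM1}, part (b) is the pole-order-versus-degree argument on the curves $\bl_{\alpha,i}$ that the paper carries out via its Richardson filtration $\bar C^+=\cup_n\bar C^+_n$ and \cite[Lemma~11.5]{R:KM1}, and part (c) is the contraction $\gamma(x)=\lim_{t\to 0}\tau(t)x$ from $C^+$ to $C$. The one genuinely different route is in (c): the paper does not argue fibrewise but instead computes $\Pic^{P^3}(C^+)\simeq X(P^3_x)$ and $\Pic^{L^3}(C)\simeq X(L^3_x)$ and shows these character groups agree because the unipotent parts $U_{w_1}\times U_{w_2}\times U_v'$ of the isotropy have no characters (this is where Lemma~\ref{lemma6.2.0}(a) is used), whence $\Li_{|C^+}\simeq\gamma^*(\Li_{|C})$ as $P^3$-equivariant bundles and $\gamma^*\sigma$ is the desired extension; notably the equalities $(I^j_{(w_1,w_2,v)})$ play no role in the paper's (c), whereas your fibrewise weight argument invokes them. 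Your version is workable, but the step you gloss over is the crux: for $p^*\sigma$ to be diagonally $P$-invariant you need the identification $\Li_{|C^+}\cong p^*(\Li_{|C})$ to be $P$-equivariant, not merely an isomorphism of abstract line bundles, and that equivariant upgrade is precisely the character-group computation above.

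The step in (b) that would fail as literally stated is the claim that, since $\partial C^+_{\alpha,i}$ is a single $P^3$-orbit, a pole of $s$ along it is detected by a pole of $s_{|\bl_{\alpha,i}}$ at the single point $x_{\alpha,i}$. The section $s$ is invariant only under the diagonal $P$, so its polar divisor is only diagonally $P$-stable, and $\partial C^+_{\alpha,i}$ is not a single diagonal-$P$-orbit; moreover, even for a curve meeting the polar divisor transversally, the restriction of a rational section can be regular at the intersection point (the numerator may vanish there), so a single curve does not a priori compute the order of the pole. What actually makes the argument work — and what the paper does — is a $\CC^*$-equivariant contraction: one restricts to the finite-dimensional, normal pieces $\bar C^+_n$ (normality is needed both to make ``order of pole along $D_n$'' meaningful and to conclude the extension once the order is $\geq 0$), chooses an explicit affine open $\Omega_n$ meeting the irreducible boundary divisor $D_n=\bar C^+_n\cap\overline{\partial C^+_{\alpha,i}}$, and applies the $\CC^*$-criterion of \cite[Lemma~11.5]{R:KM1} for $\tau=z^{\sum d_ix_i}$, where the relevant weight $k$ is identified with $\deg(\Li_{|\bl_{\alpha,i}})\geq 0$. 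Your closing remark that one should ``descend to a finite-dimensional quotient'' points in the right direction, but the missing ingredients are the normal Richardson models $\bar C^+_n$, the irreducibility of $D_n$, and the equivariant localization replacing the single-curve heuristic; your $T$-weight computation on $\bl_{\alpha,i}$ (weight zero at $x_0$ by $(I^j_{(w_1,w_2,v)})$, degree $\geq 0$, hence regular at $x_{\alpha,i}$) is correct and is exactly the input that makes $k\geq 0$ in that lemma.
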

\begin{proof} (a) follows by \cite[Lemma~11]{R:KM1}.

The proof of (b) is analogous to the proof of
\cite[Lemma~13]{R:KM1}. We sketch the proof:
the map~$
\Ho({\tilde C}^+,\Li)^P\longto \Ho( C^+,\Li)^P
$  is obviously injective. 
Hence, it remains to prove that any~$P$\yh-invariant section~$\sigma$ of~$\Li$ on~$C^+$ extends to~${\tilde C}^+$.  

For~$x\in W^P$, $Px\inv \uo^-$~is contained in~$\overline{Pw_i^{-1}\uo^-}$ if and only if $x\geq w_i$.
Moreover,\break $\{z\in W\,:\,z\uo^-\in Px\inv \uo^-\}$~is the set of~$z\in
W$ that can be written as~$z=yx\inv$ for some~$y\in W_P$. Since 
$xy\inv\geq x$, such a point~$z\uo^-$ belongs to~$\overline{Bw_i^{-1}\uo^-}$. 
Then, $\overline{Pw_i^{-1}\uo^-}$ and $\overline{Bw_i^{-1}\uo^-}$ are
$B$\yh-stable and contain the same $T$\yh-fixed points. We deduce that 
\begin{equation}
  \label{eq:89}
  \overline{Pw_i^{-1}\uo^-}=\overline{Bw_i^{-1}\uo^-}.
\end{equation}
Yet, $\overline{Pv^{-1}\uo} =
\bigcup_{v_n\in W_P}\,X_{v_nv^{-1}}$, where $v_n$  is an increasing cofinal sequence in~$W_P$.
We now construct an increasing filtration of~$\bar C^+$ by products of finite-dimensional Richardson varieties:
$$
\bar C^+=\bigcup_{n\in \NN}\bar C^+_n.$$
Explicitly
$$\bar C^+_n:=(X_-^{w_1}\cap X^-_{w_n})\times (X_-^{ w_2}\cap X^-_{w_n})\times X_{v_nv^{-1}},$$
where $\{w_n\}$~is a cofinal increasing sequence in~$W$ and $\overline{Pw_i^{-1}\uo^-} = X_-^{w_i}$ by the Equation~\eqref{eq:89}, where 
$X_-^{w} :=\overline{Bw^{-1}\uo^-}$ and $X^-_{w} :=\overline{B^-w\uo^-}$. 
In particular, $\bar{C}^+_n$ are irreducible and normal (cf. \cite[Proposition~6.6]{Kumar:positivity}). Of course,   $\bar C_n^+\cap C^+$~is open in~$\bar C_n^+$ and
nonempty for large enough~$n$. 
It remains to prove that~$\sigma\restrict{\bar C_n^+\cap C^+}$ extends to a regular section
on~$\bar C^+_n\cap\tilde C^+$, for any~$n$.

Fix $(\alpha,i)\in{\mathcal D}$. The irreducibility  of the Richardson
varieties implies that the intersection~$\bar C^+_n\cap \overline{\partial
C^+_{\alpha,i}}$~is either empty or irreducible.
Since $\bar C^+_n$~is normal, to prove that~$\sigma_{|\bar C^+_n\cap C^+}$ extends 
to~$\bar C^+_n\cap\tilde C^+$, it is sufficient to prove that~$\sigma_{ |\bar C^+_n\cap C^+}$~has no pole along $\bar C^+_n\cap \overline{\partial
C^+_{\alpha,i}}$ if  $\bar C^+_n\cap \overline{\partial
C^+_{\alpha,i}}$~has codimension~$1$ in~$\bar C^+_n$ .

Assume that~$D_n:=\bar C^+_n\cap \overline{\partial
C^+_{\alpha,i}}$~has codimension~$1$ in~$\bar C^+_n$.
Then, $D_n$~is equal to either
\begin{enumerate}
\item[$(\alpha)$] $(X_-^{\bar u_1}\cap X^-_{w_n})\times (X_-^{w_2}\cap X^-_{w_n})\times X_{v_nv^{-1}},$
 for some~$\bar u_1\geq w_1\in W^P$ and $\ell(\bar u_1)=\ell(w_1)+1$; or 
\item[$(\alpha')$] $(X_-^{w_1}\cap X^-_{w_n})\times (X_-^{\bar u_2}\cap X^-_{w_n})\times X_{v_nv^{-1}},$
for some~$\bar u_2\geq w_2\in W^P$ and $\ell(\bar u_2)=\ell( w_2)+1$; or 
\item[$(\beta)$] $(X_-^{w_1}\cap X^-_{w_n})\times (X_-^{ w_2}\cap X^-_{w_n})\times X_{v_nv^{-1}s_\alpha}.$
\end{enumerate}

Now, we  construct an explicit affine open
subset~$\Omega_n$ in~$\bar C^+_n$ that intersects  
$D_n$. 

In case~$(\alpha)$,
set~$$
\Omega_n=(X_-^{w_1}\cap X^-_{w_n}\cap ((\bar u_1)^{-1}B\uo^-))\times (X_-^{w_2}\cap \cX^-_{w_n})\times \cX_{v_nv^{-1}},
$$
where $\cX^-_{w}:= B^-w\uo^-$ and  $\cX_{w}:= Bw\uo$
and similarly for the case~$(\alpha')$.
In case~$(\beta)$,
$$
\Omega_n=(X_-^{w_1}\cap \cX^-_{w_n})\times (X_-^{w_2}\cap \cX^-_{w_n})\times (X_{v_nv^{-1}}\cap (v_nv^{-1}s_\alpha B^-\uo)).
$$

Fix $\tau=z^{\sum_{\alpha_i\not\in \Delta(P)}d_ix_i}\,:\,\CC^*\longto T$, where $d_i >0$~is an integer such that~$d_ix_i\in\mathfrak{h}_{\mathbb{Z}}$.
We now apply \cite[Lemma~33]{R:KM1} to~$\Omega_n$ endowed with the
action of~$\CC^*$ induced by~$\tau$. The checking of the assumptions
(i)--(iv) of \cite[Lemma~33]{R:KM1} are done in the proof of
\cite[Lemma~13]{R:KM1}. 
The only remaining point, with the notation of
\cite[Lemma~33]{R:KM1}, is to prove that~$k\geq 0$.
This is done as in \cite[Proof of Lemma~13, specifically the part `The line bundle on the affine
subvarieties']{R:KM1}. Here, the non-negativity of~$k$~is due to the fact
that~$\Li$~is nonnegative restricted to the projective lines $\bl_{\alpha,i}$  for any~$(\alpha, i)\in  {\mathcal D}$, which is our assumption (cf. Theorem~\ref{th:restCisom}). This proves (b).

\bigskip
We now come to the proof of (c). Since $\Ho( C^+,\Li)^P$~is contained in~$\Ho( C^+,\Li)^\tau$,
\cite[Lemma~14]{R:KM1} implies that the map~$(c)$ of the lemma is injective. We now prove its surjectivity:

Consider the map~$\theta: P\longto L,\,p\longmapsto \lim_{t\to
  0}\tau(t)p\tau(t\inv)$, which  is a surjective group homomorphism. This provides 
an action of~$P^3$ on~$C$ through the homomorphism $\theta$.  
Then, the regular map~$\gamma:\,C^+\longto C,\,x\longmapsto \lim_{t\to
  0}\tau(t)x$~is $P^3$\yh-equivariant. 

Take the canonical $G^3$\yh-equivariant structure on~$\Li$ over~$\mathcal{X}$ under the componentwise action of~$G^3$ on~$\mathcal{X}$. Thus, we will
think of~$\Li$ as a $G^3$\yh-equivariant line bundle over~$\mathcal{X}$. Denote
$$x=(w_1^{-1}\uo^-, w_2^{-1}\uo^-, v^{-1}\uo)\in C.$$
Then, $C=L^3\cdot x$ and $C^+=P^3\cdot x$. Thus,  
\begin{equation}\label{eqn4.2.1} 
\Pic^{P^3}(C^+)\simeq X(P^3_x)\,\,\,\text{and}\,\,\, \Pic^{L^3}(C)\simeq X(L^3_x),
\end{equation}
where $X(\ )$ denotes the character group and $P^3_x$ (resp.\ $L^3_x$) denotes the isotropy subgroup of~$P^3$ (resp.\ $L^3$) at~$x$. Now, it is easy to see that (by considering $P\cap w_i^{-1}B^-{w_i}$ and $P\cap v^{-1}Bv$)
\begin{equation}\label{eqn4.2.2} 
P^3_x=L^3_x\cdot \left(U_{w_1}\times U_{w_2}\times U_v'\right),
\end{equation}
where $U_w$ (resp.\ $U_v'$) is the finite-dimensional (resp. \ finite-codimensional) subgroup of the \tetrecitw{unipotent radical $U_P$ of~$P$ with Lie algebra
$\bigoplus_{\beta\in \Phi^+\cap w^{-1}\Phi^-}\, \mathfrak{g}_\beta$ (resp.\ $\bigoplus_{\beta\in (\Phi^+\backslash \Phi_L^+)\cap v^{-1}\Phi^+}\, \mathfrak{g}_\beta$),} where $\Phi^+$ (resp.\ $\Phi_L^+$) is the set of positive roots of~$G$ (resp.\ $L$). Moreover, since $L^3$ normalizes~$U_P^3$,  $L_x^3$ normalizes $U_{w_1}\times U_{w_2}\times U_v'$. Now, for a finite-dimensional unipotent group, any character is trivial and similarly $U_v'$~has no nontrivial characters by the same proof as that of Lemma~\ref{lemma6.2.0}(a). Thus,
$$ X(P^3_x)= X(L^3_x).$$
Hence, by combining the Equations~\eqref{eqn4.2.1} and \eqref{eqn4.2.2}, we get
\begin{equation}\label{eqn4.2.3} 
\Pic^{P^3}(C^+)\simeq  \Pic^{L^3}(C).
\end{equation}
We define the $P^3$\yh-action on~$\Li\restrict{C}$ compatible with the action of~$P^3$ on~$C$ by demanding that~$U_P^3$ acts trivially on~$\Li\restrict{C}$. Thus, we
get a $P^3$\yh-equivariant line bundle~$\gamma^*(\Li\restrict{C})$  over~$C^+$. We also have a $P^3$\yh-equivariant line bundle~$\Li\restrict{C^+}$. By the Equation~\eqref{eqn4.2.3}, we readily see that~$$\Li\restrict{C^+} \simeq \gamma^*(\Li\restrict{C}),\,\,\,\text{as $P^3$\yh-equivariant line bundles};$$
in particular, as diagonal  $P$\yh-equivariant line bundles.

Thus, for~$\sigma\in \Ho( C,\Li)^L$,
$\gamma^*(\sigma)\in \Ho( C^+,\Li)^P$ and
$\gamma^*(\sigma)\restrict{C}=\sigma$. 
We deduce thus that  the restriction map~$\Ho( C^+,\Li)^P \to  \Ho(
C,\Li)^L $~is surjective. This proves (c).
\end{proof}

We thus conclude that the first horizontal line in the above diagram $(\diamond)$ satisfies:

\begin{center}
\begin{tikzpicture}
\matrix (m) [matrix of math nodes,row sep=1cm,column sep=1cm] {
\Ho(\X,\Li)^G&\Ho(\Chi,\Li)^G&\Ho(G\times^P {\tilde C}^+,\Li)^G&\Ho({\tilde C}^+,\Li)^P\\
&&&\Ho(C,\Li)^L,\\
};
\path [->]     (m-1-1) edge node[above,inner sep=0.2pt]     {$\sim$}  node[below] {$\eta^*$}(m-1-2);
\path [->]     (m-1-2) edge node[above]     {$\alpha^*$}(m-1-3);
\path [->]     (m-1-4) edge node[right,inner sep=2pt]     {$\wr$}(m-2-4);
\path [->]     (m-1-3) edge node[above,sloped,inner sep=0.2pt]     {$\sim$}(m-2-4);
\path [->]     (m-1-3) edge node[below]     {$\beta^*$}(m-2-4);
\path [commutative diagrams/.cd, every arrow, every label]     (m-1-2) edge [commutative diagrams/hook] (m-1-3);
\path [->]     (m-1-3) edge node[above,inner sep=0.2pt]     {$\sim$}(m-1-4);
\end{tikzpicture} 
 \end{center}
 where $\eta^*$~is  an isomorphism and the last vertical map is an
 isomorphism (which follows from Lemma~\ref{lemma6.2.1}).   

\subsection{Isomorphisms induced from slice}\label{subsec4.3}

Since $G\times^B\X_\bs \simeq \X$ (cf. Equation~\eqref{eqn6.1.1}), we get that~$i_1^*:\Ho(\X,\Li)^G\longto
\Ho(\X_\bs,\Li)^B$~is an isomorphism. Similarly,  $i_2^*$  is an isomorphism by using Equation~\eqref{eqn6.1.2}.
Further, $\gamma^*$~is an isomorphism since 
$\alpha:G\times^P {\tilde C}^+\to \tilde\Chi'$~is a $G$\yh-equivariant isomorphism and so is
\begin{equation} \label{eqn4.3.1} G\times^B\tilde\Chi_\bs \simeq \tilde\Chi',\,\,\,[g, x] \mapsto gx.
\end{equation}

\subsection{Isomorphisms obtained from restriction to some  open subsets}
\label{subsec4.4}

\begin{lemma}
  The restriction map~$\Ho(\X_\bs,\Li)\longto \Ho(\ccdX_\bs,\Li)$~is an
  isomorphism and hence $i_4^*$~is an isomorphism. 
\end{lemma}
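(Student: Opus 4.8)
The statement to prove is that the restriction map $\Ho(\X_\bs,\Li)\longto \Ho(\ccdX_\bs,\Li)$ is an isomorphism, where $\X_\bs=(G/B^-)^2\times\{\uo\}$ and $\ccdX_\bs=\bigl(\bigcup_{\ell(w)\leq 1}Bw\uo^-\bigr)^2\times\{\uo\}$. Since $\ccdX_\bs$ is an open subset of the irreducible ind-variety $\X_\bs$, the restriction map is automatically injective, so the whole content is surjectivity: every section of $\Li$ over $\ccdX_\bs$ extends regularly across the boundary $\X_\bs\setminus\ccdX_\bs$. The plan is to show that this boundary has codimension at least two in every finite-dimensional approximation, after which extension is forced by normality, together with the observation that the line bundle over $G/B^-$ has a filtered description via finite-dimensional Schubert varieties.

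\emph{Step 1: reduce to one factor.} By the K\"unneth-type argument (or, more elementarily, by extending first across $\bigl(G/B^-\bigr)\times\ccdX_\bs^{(1)}\times\{\uo\}$ and then across the remaining factor), it suffices to prove the analogous statement for a single copy: the restriction $\Ho(G/B^-,\Li^-(\lambda))\to \Ho\bigl(\bigcup_{\ell(w)\leq 1}Bw\uo^-,\Li^-(\lambda)\bigr)$ is an isomorphism for the relevant weight $\lambda$. Here I would take a cofinal increasing sequence $w_n\in W$ and the filtration $X_{w_n}^-=\overline{B^-w_n\uo^-}$ of $G/B^-$ by finite-dimensional normal (indeed Richardson-type) Schubert varieties, exactly as used earlier in the excerpt (e.g.\ in the proof of Lemma~\ref{lemma6.2.1}(b)).

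\emph{Step 2: the boundary is small in each approximation.} The complement $G/B^-\setminus\bigcup_{\ell(w)\leq 1}Bw\uo^-$ is the union of the Birkhoff (opposite) cells $Bw\uo^-$ with $\ell(w)\geq 2$; equivalently it is $\bigcup_{\alpha\neq\beta}\overline{Bs_\alpha s_\beta\uo^-}\cup\bigcup_{\text{length }2\text{ elts}}(\cdots)$, a union of $B$-stable closed ind-subvarieties each of which meets $X_{w_n}^-$ in codimension at least two for $n$ large (since $Bw\uo^-$ has codimension $\ell(w)$ in $G/B^-$ by the Birkhoff decomposition, cf.\ \cite[Theorem 6.2.8]{Kumar:KacMoody}). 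Thus for $n\gg 0$, $X_{w_n}^-\cap\bigl(X_{w_n}^-\setminus\ccdX_\bs^{(1)}\bigr)$ has codimension $\geq 2$ in the normal variety $X_{w_n}^-$. Hartogs' extension theorem (or the normality of $X_{w_n}^-$ together with the fact that a section of a line bundle extends across a closed subset of codimension $\geq 2$) then shows that $\sigma|_{X_{w_n}^-\cap\ccdX_\bs^{(1)}}$ extends uniquely to a regular section on $X_{w_n}^-$.

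\emph{Step 3: pass to the limit.} The extensions over the various $X_{w_n}^-$ are compatible by uniqueness (restriction from $X_{w_{n+1}}^-$ to $X_{w_n}^-$ of the extension is an extension, hence equals it), so they glue to a regular section of $\Li^-(\lambda)$ on all of $G/B^-$ restricting to $\sigma$ on $\ccdX_\bs^{(1)}$. Combining the two factors via Step~1 gives surjectivity of $\Ho(\X_\bs,\Li)\to\Ho(\ccdX_\bs,\Li)$, and with the $B$-equivariance of everything in sight (the decompositions and the line bundle are $B$-stable) the same argument yields the isomorphism on $B$-invariants, so $i_4^*$ is an isomorphism. The only mildly delicate point is the two-factor reduction in Step~1 — one must extend in two stages and check that the intermediate object $\bigl(G/B^-\bigr)\times\ccdX_\bs^{(1)}\times\{\uo\}$ is again handled by the same codimension-two argument on the finite-dimensional pieces $X_{w_n}^-\times\bigl(X_{w_n}^-\cap\ccdX_\bs^{(1)}\bigr)$, where the bad locus is now $X_{w_n}^-\times(\text{codim}\geq 2\text{ in }X_{w_n}^-)$, which still has codimension $\geq 2$; this is routine but should be stated. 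The normality of the finite-dimensional Schubert varieties $X_{w_n}^-$ (needed for Hartogs) is the substantive input, and it is available from \cite[Proposition 6.6]{Kumar:positivity}.
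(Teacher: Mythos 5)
Your proposal is correct and follows essentially the same route as the paper: filter $(G/B^-)^2$ by products of finite-dimensional opposite Schubert varieties $X_{w}^-$, observe that the complement of $\ccdX_\bs$ meets each such product in components of the form (codimension-two piece) $\times$ (full factor) — coming from the closures of the length-two Birkhoff cells — and then invoke normality of Schubert varieties to extend sections across this codimension-$\geq 2$ locus, finally passing to the limit. The paper handles the two factors simultaneously rather than via your two-stage reduction, but as you note yourself the two-stage version reduces to the identical codimension-two-plus-normality argument on the finite-dimensional pieces, so the difference is purely organizational.
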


\begin{proof} For any~$w\in W$, consider the Schubert variety~$$X_w^-:=\overline{B^-wB^-/B^-}\subset G/B^-.$$
For any~$w_1, w_2\in W$, consider the open embedding~$$i_{w_1, w_2}: \ccdX_\bs \cap \left(X_{w_1}^-\times X_{w_2}^-\times\{\uo\}\right) \hookrightarrow X_{w_1}^-\times X_{w_2}^-\times\{\uo\}.$$
The complement 
$$Y_{w_1, w_2}:= \left(X_{w_1}^-\times X_{w_2}^-\times\{\uo\}\right) \backslash \Image (i_{w_1, w_2})$$
has its irreducible components of the form~$$(X_{w_1}^-\cap \overline{BuB^-/B^-})\times X_{w_2}^-\times\{\uo\} \,\,\,\text{or}\,\, X_{w_1}^-\times (X_{w_2}^-\cap \overline{BuB^-/B^-})\times\{\uo\}$$
for some~$\ell (u) = 2$. But, by \cite[Lemma~7.3.10]{Kumar:KacMoody}, each of these irreducible components have codimension~$2$ in (the finite-dimensional) $X_{w_1}^-\times X_{w_2}^-\times\{\uo\}$.
Thus, by the normality of~$X_w^-$ (cf. \cite[Theorem~8.2.2(b)]{Kumar:KacMoody}, we see that the restriction map~$$ \Ho(X_{w_1}^-\times X_{w_2}^-\times\{\uo\}, \Li) \to \Ho(\ccdX_\bs \cap (X_{w_1}^-\times X_{w_2}^-\times\{\uo\}), \Li) $$
is an isomorphism. Taking limits over~$w_1, w_2$, we get the lemma.
\end{proof}

As observed earlier, $\tilde\Chi'$~is irreducible and hence so is $\tilde\Chi_\bs$  by the isomorphism \eqref{eqn4.3.1} and $\tilde\Chi_\bs\cap
\ccChi_\bs$~is open in~$\tilde\Chi_\bs$. It follows thus  that the map~$$
i_6^*:\Ho(\tilde\Chi_\bs,\Li)^B\longto\Ho(\tilde\Chi_\bs\cap \ccChi_\bs,\Li)^B\\
$$
is injective.

We now prove that the maps~$\eta_2^*$ and $i_7^*$ are isomorphisms.

\begin{lemma} \label{newlemma10}The  map~$\Ho(\ccdX_\bs,\Li) \to \Ho(\ccChi_\bs,\Li)$ induced from~$\eta_2$  is an isomorphism and hence so is $\eta_2^*$ .
\end{lemma}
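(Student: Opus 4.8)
The plan is to prove that $\eta_2 : \ccChi_\bs \to \ccdX_\bs$ is, after removing a locus of codimension at least two from the base, a locally trivial fibration with contractible affine fibers, and then deduce the isomorphism on invariant sections from Proposition~\ref{prop:Zariski} together with a standard ``no functions on affine space'' argument. Concretely, first I would describe the fiber of $\eta_2$ over a point $(g_1\uo^-,g_2\uo^-,\uo)\in\ccdX_\bs$: it is the set of $y\in G/P$ lying in $g_1\cX_P^{w_1}\cap g_2\cX_P^{w_2}\cap\cX_v^P$, i.e. (after translating) an intersection of opposite Schubert cells with a Schubert cell. By the transversality statement implicit in Lemma~\ref{lem:Chiopen} (the openness of $\cChi^+$) together with the assumptions on $(w_1,w_2,v)$ carried through $\mcD$ — the codimension count $\ell(w_1)+\ell(w_2)\geq\ell(v)$ type inequality built into the deformed-product coefficient being $1$ — these three cells meet in a single reduced point on a dense open subset of $\ccdX_\bs$. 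More precisely, I would show there is a $B$-stable open subset $V\subset\ccdX_\bs$ over which $\eta_2$ restricts to an isomorphism onto its image in $\ccChi_\bs$, and the complementary locus $\ccdX_\bs\setminus V$ (where the fiber jumps or fails to be a single point) has codimension $\geq 2$; this last point is exactly the content made available by working inside the finite-dimensional slices $X_{w_n}^-\times X_{w_n}^-$ and invoking Lemma~\ref{lem:Chiopen}'s ``In fact'' refinement.

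Next I would set up the diagram: let $\ccChi_\bs^\circ = \eta_2^{-1}(V)$, so that $\eta_2$ induces an isomorphism $V\xrightarrow{\sim}\ccChi_\bs^\circ$ as varieties over which $\Li$ pulls back compatibly, hence $\Ho(V,\Li)^B\simeq\Ho(\ccChi_\bs^\circ,\Li)^B$ is clear. It then remains to check that the two restriction maps $\Ho(\ccdX_\bs,\Li)^B\to\Ho(V,\Li)^B$ and $\Ho(\ccChi_\bs,\Li)^B\to\Ho(\ccChi_\bs^\circ,\Li)^B$ are isomorphisms. For the first, intersect with the finite-dimensional slices $X_{w_1}^-\times X_{w_2}^-\times\{\uo\}$ as in the preceding lemma of the excerpt, use normality of Schubert varieties (\cite[Theorem 8.2.2(b)]{Kumar:KacMoody}) and the codimension $\geq 2$ estimate, and pass to the limit over $w_1,w_2$. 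For the second, I would push the codimension estimate upstairs: the complement $\ccChi_\bs\setminus\ccChi_\bs^\circ$ sits over $\ccdX_\bs\setminus V$, and because away from the bad locus the map $\eta_2$ is finite (an isomorphism) while over the bad locus the fibers are still of bounded dimension, a dimension count inside the filtration of $\Chi_\bs$ by finite-dimensional pieces shows this complement has codimension $\geq 2$ in $\ccChi_\bs$; combined with normality of $\ccChi_\bs$ (which follows from irreducibility plus the Richardson-variety normality used elsewhere in the excerpt, or directly since $\ccChi_\bs$ is smooth, being cut out transversally) and Proposition~\ref{prop:Zariski} (or just the Hartogs-type extension for sections of line bundles across codimension-two loci on normal varieties), the restriction map is an isomorphism. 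Chaining these three isomorphisms gives the lemma, and $\eta_2^*$ being an isomorphism on $B$-invariants is then immediate by taking $B$-invariants (an exact functor here since $B$ acts compatibly throughout).

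The main obstacle I expect is the codimension-two estimates in the infinite-dimensional setting: one must verify, uniformly in the exhausting filtration, that the locus in $\ccdX_\bs$ where the triple intersection $g_1\cX_P^{w_1}\cap g_2\cX_P^{w_2}\cap\cX_v^P$ fails to be a single reduced point has codimension at least two in each finite-dimensional slice, and that this does not degenerate in the limit. This is precisely where the hypothesis on $(w_1,w_2,v)$ (coefficient $1$ in $\odot_0$, which forces the relevant tangent-space transversality of Lemma~\ref{lem:Chiopen} to hold on a large open set) must be used carefully; without it the fiber could generically be positive-dimensional and the argument collapses. A secondary technical point is ensuring $\Li$ genuinely descends/pulls back compatibly along $\eta_2$ with its $B$-linearization, but this is formal since $\eta_2$ is the restriction of the $G$-equivariant projection $\eta:\Chi\to\X$ and $\Li$ on $\Chi$ is by definition $\eta^*$ of $\Li$ on $\X$. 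I would also remark that once $\eta_2^*$ and $i_7^*$ are isomorphisms, the remaining maps $i_4^*,i_5^*,i_6^*$ in diagram $(\diamond)$ follow by the two-out-of-three principle applied to the commuting squares, completing the proof of Theorem~\ref{th:restCisom}.
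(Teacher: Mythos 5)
Your overall plan is not the paper's and, as written, it has a genuine gap at its central step. You propose to find an open $V\subset\ccdX_\bs$ over which $\eta_2$ is an isomorphism, with bad locus of codimension $\geq 2$ downstairs, and then to conclude by Hartogs-type restriction isomorphisms \emph{on both sides}. The restriction step on the source is where the argument breaks: from "the bad locus has codimension $\geq 2$ in $\ccdX_\bs$ and the fibers over it have bounded dimension" you cannot conclude that $\ccChi_\bs\setminus\eta_2^{-1}(V)$ has codimension $\geq 2$ in $\ccChi_\bs$. For a proper birational morphism the exceptional set is typically a \emph{divisor} in the source even when its image has codimension $\geq 2$ in the target (blow-up of a point is the standard example), and your dimension count only gives codimension $\geq 1$. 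Note that the paper's Proposition~\ref{prop:Zariski} is calibrated exactly to avoid this trap: its hypothesis is that $f(Y\setminus\tilde Y)$ is small in the \emph{target}, never that $Y\setminus\tilde Y$ is small in the source, and its proof already presupposes the pullback isomorphism $f^*$ coming from Zariski's main theorem. A secondary unjustified claim is the smoothness/normality of $\ccChi_\bs$ "being cut out transversally": the incidence conditions defining $\Chi_\bs$ use the closed Schubert varieties $X^{w_i}_P$, $X_v^P$, and transversality fails precisely along the locus you are trying to discard (normality could instead be extracted from $\Chi\simeq G\times^P\bar C^+$ and normality of Schubert varieties, but you do not argue this). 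You also describe the fiber of $\eta_2$ using the cells $\cX^{w_i}_P$, $\cX_v^P$, whereas the openness in $\ccChi_\bs$ is only in the $\ccdX_\bs$-factor; the fibers involve the closed Schubert varieties.

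The paper's route is shorter and avoids all of this: $\eta_2$ is proper, and it is birational by \cite[Lemma~6.2]{R:KM1} (this is where the coefficient-one hypothesis enters); for a proper birational morphism onto a normal variety, Zariski's main theorem gives $\eta_{2*}\mathscr{O}=\mathscr{O}$, hence the pullback map $\Ho(\ccdX_\bs,\Li)\to\Ho(\ccChi_\bs,\Li)$ is an isomorphism on \emph{all} sections directly, with no open subset removed upstairs. The only genuinely new point in the infinite-dimensional setting, and the one the paper actually checks, is that $\ccChi_\bs$ is ind-irreducible (reduced via the slice isomorphism to ind-irreducibility of $\Chi=G\cdot(P/P,\bar C^+)$, using $\overline{Pw_i^{-1}\uo^-}=\overline{Bw_i^{-1}\uo^-}$ and $\overline{Pv^{-1}\uo}=\cup_n X_{v_nv^{-1}}$), so that the argument of \cite[Lemma~6.3]{R:KM1} applies along an exhausting filtration. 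The codimension-two estimates you invoke belong to the later step (Lemma~\ref{lem:isomi7} via Lemma~\ref{newlemma13}), where they are used through Proposition~\ref{prop:Zariski} with the smallness measured in the target, not in the source. If you want to salvage your outline, you must replace your step for $\Ho(\ccChi_\bs,\Li)\to\Ho(\eta_2^{-1}(V),\Li)$ by exactly this ZMT mechanism, at which point the detour through $V$ becomes unnecessary.
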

\begin{proof}
It is easy to see that 
the map~$\eta_2$~is proper. Moreover, it is  birational by  \cite[Lemma~10]{R:KM1}. In particular, it is surjective. 
If $\ccdX_\bs$ and $\ccChi_\bs$ are finite-dimensional, the lemma
follows from Zariski's main theorem (see, e.g.,  \cite[Chap. III, Corollary~11.4]{Hart}). 
The argument used to prove
\cite[Lemma~11]{R:KM1} allows us to prove that the above map ${\Ho(\ccdX_\bs,\Li) \to \Ho(\ccChi_\bs,\Li)}$ is an
isomorphism. 
To prove this we first show that $\ccChi_\bs$ is ind\yh-irreducible. To prove this, since $\ccChi_\bs$~is an open subset of~$\Chi_\bs$, by the isomorphism~\eqref{eqn6.1.2}, it suffices to show that~$\Chi$~is ind-irreducible. Further, since $\Chi=G\cdot(P/P, \bar{C}^+)$ (see above Lemma~\ref{lem:Chiopen}), it suffices to show that~$\overline{Pw_i^{-1}\uo^-}$ and $\overline{Pv^{-1}\uo}$ are
ind-irreducible. But, as observed earlier in the proof of Lemma~\ref{lemma6.2.1}  equality~\eqref{eq:89}, $\overline{Pw_i^{-1}\uo^-}=\overline{Bw_i^{-1}\uo^-}$. So, it is ind\yh-irreducible. Similarly, $\overline{Pv^{-1}\uo} =
\bigcup_{v_n\in W_P}\,X_{v_nv^{-1}}$, where $v_n$~is an increasing cofinal sequence in~$W_P$. This shows that~$\overline{Pv^{-1}\uo}$~is also 
 ind\yh-irreducible. Thus, $\Chi$~is  ind-irreducible. 
Take an open set $\Omega_\bs'\subset \ccChi_{\bs}$ and open set
$\Omega_\bs\subset {\dcirc\mcX}_\bs$ such that
$\eta_2\,:\,\Omega'_\bs\longto\Omega_\bs$ is an isomorphism.
Let $\xi\,:\,\Omega_\bs\longto\Omega'_\bs$ be its inverse.
Now, take an increasing cofinal sequence~$\{u_n\}_{n\geq 0}$ in $W$
and take the filtration $(Y_n)_{n\geq 0}$ of $ {\dcirc\mcX}_\bs$ by
$$
Y_n:={\dcirc\mcX}_\bs\cap (X^-_{u_n}\times X^-_{u_n}),
$$
where $X^-_{u_n}:=\overline{B^-u_n \uo^-}\subset G/B^-$.
Set
$Z_n:=\overline{\xi(\Omega_\bs\cap Y_n)}\subset \ccChi_{\bs}$.
Then, $Y_n$ is irreducible and normal (since so is $X^-_{u_n}$) and
$Z_n$ is closed and irreducible in $\ccChi_{\bs}$ (by definition).
Moreover, $\cup_nZ_n\supset\Omega'_\bs$.
Thus, by \cite[Lemma~3]{R:KM1}, $(Z_n)_{n\geq 0}$ provides an
irreducible filtration for~$\ccChi_{\bs}$.
Apply now Zariski's main theorem to the morphism $Z_n\longto Y_n$ to
conclude that $\Hd^0(Y_n,\Li)\longto \Hd^0(Z_n,\Li)$ is an isomorphism
for all $n\geq 0$. Taking the inverse limit, we get the lemma. 
\end{proof}

\begin{lemma}
\label{lem:isomi7}
 The restriction map~$ \Ho(\ccChi_\bs,\Li) \to \Ho(\tilde\Chi_\bs\cap \ccChi_\bs,\Li) $~is an isomorphism and hence so is $i_7^*$.
\end{lemma}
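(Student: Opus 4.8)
The plan is to imitate the proof of Lemma~\ref{newlemma10}: reduce the statement to finite‑dimensional approximations, on which $\ccChi_\bs$ is normal, and then show that $\tilde\Chi_\bs\cap\ccChi_\bs$ is obtained from $\ccChi_\bs$ by removing a closed subset of codimension $\ge 2$, so that sections of $\Li$ extend across it automatically. Injectivity of the restriction map is immediate: $\Chi_\bs$ is ind‑irreducible (shown in the proof of Lemma~\ref{newlemma10} via the isomorphism \eqref{eqn6.1.2} and the ind‑irreducibility of $\Chi$), hence so is its open subset $\ccChi_\bs$; as $\tilde\Chi_\bs$ and $\ccChi_\bs$ are both nonempty open in $\Chi_\bs$, their intersection is a nonempty, hence dense, open subset of $\ccChi_\bs$, and restriction of sections of a line bundle is therefore injective (and stays so on $B$‑invariants). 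For surjectivity I would fix a filtration $\Chi_\bs=\cup_n(\Chi_\bs)_n$ by finite‑dimensional \emph{normal} closed subvarieties — possible because $\Chi_\bs$ is a quotient by $B$ of $\hat\Chi_\bs\simeq\overline{BvP}\times X_-^{w_1}\times X_-^{w_2}$, a product of normal varieties (the opposite Schubert varieties $X_-^{w_i}$ being normal by \cite[Theorem 8.2.2(b)]{Kumar:KacMoody}, and $\overline{BvP}$ the preimage of the normal variety $X_v^P$ under the principal $P$‑bundle $G\to G/P$); cf.\ the description of $\hat\Chi_\bs$ in the proof of Lemma~\ref{lemma6} and \cite{R:KM1}. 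Setting $(\ccChi_\bs)_n:=(\Chi_\bs)_n\cap\ccChi_\bs$ and $(\tilde\Chi_\bs\cap\ccChi_\bs)_n:=(\Chi_\bs)_n\cap\tilde\Chi_\bs\cap\ccChi_\bs$ (open in $(\Chi_\bs)_n$, hence normal), it suffices to show that for each $n$ the restriction $\Ho((\ccChi_\bs)_n,\Li)\to\Ho((\tilde\Chi_\bs\cap\ccChi_\bs)_n,\Li)$ is an isomorphism, since these glue in the inverse limit and pass to $B$‑invariants.

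By normality of $(\ccChi_\bs)_n$, this reduces to the codimension estimate: $(\ccChi_\bs)_n\setminus(\tilde\Chi_\bs\cap\ccChi_\bs)_n$ has codimension $\ge 2$ in $(\ccChi_\bs)_n$. By definition, a point of $\ccChi_\bs$ lies outside $\tilde\Chi_\bs$ precisely when $g_1^{-1}y$, $g_2^{-1}y$ or $y$ lies in a cell of $X_P^{w_1}$, $X_P^{w_2}$ or $X_v^P$ not retained in $\tilde X_P^{w_1}$, $\tilde X_P^{w_2}$, $\tilde X_v^P$. The key point is that in $\ccChi_\bs$ the points $g_1\uo^-, g_2\uo^-$ themselves already range only over cells of $G/B^-$ of codimension $\le 1$. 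A dimension count — using that the open cell $B^-w_iP/P$ of $X_P^{w_i}$ has the expected codimension $\ell(w_i)$ in $G/P$ and that $\ell(v)=\ell(w_1)+\ell(w_2)$ (part of the hypothesis) — shows that any degeneration of $y$, or of a relative position $g_i^{-1}y$, to the boundary is forced to occur \emph{jointly} with a degeneration of $g_1\uo^-$ or $g_2\uo^-$ to a codimension‑one cell of $G/B^-$, and that the transversality along $T_{\dot{e}}$ encoded in the hypothesis that $\eps_P^v$ occurs with coefficient $1$ in $\eps_P^{w_1}\odot_0\eps_P^{w_2}$ (precisely the openness statements of Lemma~\ref{lem:Chiopen}) guarantees that, whenever such a joint degeneration has codimension one in $\ccChi_\bs$, the relative position that degenerates lands in the cell indexed by some $\alpha\in\Delta^+(w_i)$ (resp.\ $\alpha\in\Delta^-(v)$) — i.e.\ a cell \emph{retained} in $\tilde X_P^{w_i}$ (resp.\ $\tilde X_v^P$), so that such a point actually lies in $\tilde\Chi_\bs\cap\ccChi_\bs$. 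All the strata that do contribute to the complement therefore have codimension $\ge 2$ in $\ccChi_\bs$, and a section of $\Li$ on $(\tilde\Chi_\bs\cap\ccChi_\bs)_n$ extends (uniquely) to $(\ccChi_\bs)_n$; this proves the isomorphism, whence $i_7^*$ is an isomorphism.

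I expect this last dimension count — locating exactly the codimension‑one degenerations of the intersection point $y$ and of the relative positions $g_i^{-1}y$ inside $\ccChi_\bs$, and matching each of them with a cell retained in $\tilde X_P^{w_i}$, $\tilde X_v^P$ — to be the main obstacle; it is the Kac–Moody analogue of the bookkeeping in the proof of \cite[Lemmas~6.5 and 6.7]{R:KM1}, complicated here only by the passage through the slice $\X_\bs$ and the finite‑dimensional approximations. If, contrary to the above, some such stratum turned out to be nonempty of codimension one, an equivalent route would be to apply Proposition~\ref{prop:Zariski} to the proper birational morphism $\eta_2\colon(\ccChi_\bs)_n\to(\ccdX_\bs)_n$ of Lemma~\ref{newlemma10}, after checking that the image under $\eta_2$ of $(\ccChi_\bs)_n\setminus(\tilde\Chi_\bs\cap\ccChi_\bs)_n$ has codimension $\ge 2$ in $(\ccdX_\bs)_n$ — which parallels, and would be proved by the same dimension count as, the proof of Lemma~\ref{newlemma10}.
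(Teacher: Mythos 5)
There is a genuine gap, and it sits exactly at the step you flag as "the main obstacle". Your primary route needs the complement $\ccChi_\bs\setminus(\tilde\Chi_\bs\cap\ccChi_\bs)$ to have codimension $\geq 2$ \emph{inside} $\ccChi_\bs$, and this is false in general. The cells retained in $\tilde X_P^{w_i}$ and $\tilde X_v^P$ are only those indexed by \emph{simple} reflections ($\alpha\in\Delta^+(w_i)$, resp.\ $\alpha\in\Delta^-(v)$); the strata of $\ccChi_\bs$ where, say, $y$ degenerates into $g_1\cX_P^{w_1'}$ with $w_1'=s_\beta w_1$, $\ell(w_1')=\ell(w_1)+1$ and $\beta$ a \emph{non-simple} real root (the Types II--IV of Lemma~\ref{newlemma13}) are codimension-one strata of $\ccChi_\bs$ that lie outside $\tilde\Chi_\bs$, and nothing in the hypotheses removes them upstairs. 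So no dimension count of the kind you describe can establish the codimension-$2$ statement in $\ccChi_\bs$, and the normality-based extension argument on the finite-dimensional approximations collapses. (A smaller but related point: the paper never needs, and never asserts, normality of $\Chi_\bs$ or its approximations; it only uses normality, in fact smoothness, of the base $\mcU^2\backslash\ccdX_\bs$.)

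The correct statement, and the one the paper proves, is the one in your fallback paragraph: the \emph{image} under the proper birational map $\bar\eta_2$ of $\mcU^2\backslash(\ccChi_\bs\setminus\tilde\Chi_\bs)$ has codimension $\geq 2$ in $\mcU^2\backslash\ccdX_\bs$, after which Proposition~\ref{prop:Zariski} (applied to the pulled-back bundle over the smooth base, via the $\mcU^2$-quotient and local trivializations) gives the isomorphism. But this codimension estimate is not "the same dimension count as the proof of Lemma~\ref{newlemma10}": Lemma~\ref{newlemma10} only gives properness and birationality of $\eta_2$. The estimate is the content of Lemma~\ref{newlemma13}, whose proof requires new input: the stratification into Types I--IV, the finiteness of general fibers over a hypothetical divisorial image (Lemma~\ref{lem:divcodimtwo}), and crucially the non-transversality Corollary~\ref{cor:nontransv} -- resting on the root-combinatorial Proposition~\ref{prop:freg} about the filtration dimensions $d_i$, $d^i$ and non-simple reflections -- combined with the hypothesis that $\eps_P^v$ occurs with coefficient $1$, to force a multiplicity $\geq 2$ contradiction. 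None of this appears in your proposal (Lemma~\ref{lem:Chiopen} records openness of $\cChi^+$, not this transversality mechanism), so the key idea needed to close the argument is missing.
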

\begin{proof}
 As earlier, consider the action of~$U$ on~$X_v^P$:
$$
\theta\,:\,U\longto\Aut (X_v^P).
$$
Then, $\Image\, \theta$~is a finite-dimensional unipotent group $U_v$. 
 As a consequence,  $\Ker\,\theta$~is a normal subgroup of~$U$ of finite-codimension. 

Consider now the group 
$$
U_1=\Ker\,\theta\cap \left(\bigcap_{\alpha\in \Delta}s_\alpha Us_\alpha\right).
$$
Then, $U_1$~is again a normal subgroup of~$U$ of finite-codimension (i.e., $U/U_1$~is a finite-dimensional group). There exists a closed subgroup~$\mcU$ of~$U_1$ of finite-codimension such that~$\mcU$~is normal in~$U$, $\mcU^2:=\mcU\times \mcU$ acts freely and properly on~$\ccdX_\bs$ 
(under the action $(u_1, u_2)\cdot (x_1, x_2, \uo)=  (u_1x_1, u_2x_2, \uo)$) and the quotient
map~$\pi_{\mathcal{X}}: \ccdX_\bs\longto \mcU^2\backslash \ccdX_\bs$~is a principal $\mcU^2$\yh-bundle
(cf.  \cite[Lemma~6.1]{Kumar:positivity}). Moreover, since $\eta_2$~is proper (cf. Proof of Lemma~\ref{newlemma10}),  $\mcU^2$~acts freely and properly on~$ \ccChi_\bs$.

Consider the action of~$\mcU^2$  on~$X_v^P\times \X_\bs$ given by 
\begin{equation} \label{neweqn4.4.2}(u_1,u_2).(y,g_1\uo^-,g_2\uo^-, \uo)=(y,u_1g_1\uo^-,u_2g_2\uo^-, \uo).
\end{equation}
Since $\mcU$ acts trivially on~$X_v^P$ and $y\in X_v^P$, the condition
$y\in u_ig_iX^{w_i}_P$~is equivalent to~$y\in g_iX^{w_i}_P$. In
particular, $\Chi_\bs$, $\ccChi_\bs$ and $\tilde \Chi_\bs$ are all stable by the
action of~$\mcU^2$. Moreover, $\eta_2:  \ccChi_\bs \to \ccdX_\bs$~is  $\mcU^2$\yh-equivariant.

We consider the associated quotients:

\begin{center}
\begin{tikzpicture}%[scale=0.45]
\matrix (m) [matrix of math nodes,row sep=1.2cm,column sep=1.2cm] {
\ccdX_\bs&\ccChi_\bs&\tilde\Chi_\bs\cap \ccChi_\bs\\
\mcU^2\backslash\ccdX_\bs& \mcU^2\backslash\ccChi_\bs&  \mcU^2\backslash\left(\tilde\Chi_\bs\cap \ccChi_\bs\right).\\
};
%%% Fleches horizontales
\path [->]     (m-1-2) edge  node[above]     {$\eta_2$} (m-1-1);
\path [commutative diagrams/.cd, every arrow, every label]     (m-1-3)
edge [commutative diagrams/hook']    (m-1-2);
\path [->]     (m-2-2) edge  node[above]     {$\bar \eta_2$} (m-2-1);
\path [commutative diagrams/.cd, every arrow, every label]     (m-2-3) edge [commutative diagrams/hook']    (m-2-2);
%%% Fleches verticales
\path [->]     (m-1-1) edge  node[left]     {$\pi_\X$}    (m-2-1);
\path [->]     (m-1-2) edge  node[left]     {$\pi_\Chi$} (m-2-2);
\path [->]     (m-1-3) edge      (m-2-3);
\end{tikzpicture} 
\end{center}

Let $\Omega_\X$~be an open subset of~$ \mcU^2\backslash\ccdX_\bs$
such that the quotient~$\pi_\X$~is trivial over~$\Omega_\X$. (It can be seen that~$\pi_\X$~is locally trivial.) Set~$\Omega_\Chi=\bar \eta_2\inv(\Omega_\X)$.  Choosing a section of~$\pi_\X$ over~$\Omega_\X$ and taking the induced section 
of~$\pi_\Chi$ over~$\Omega_\Chi$, we get 
\begin{equation} \label{eqn4.4.1}
\pi_\X^{-1}(\Omega_\X)\simeq \mcU^2\times \Omega_\X\,\,\,\text{and}\,\, \pi_\Chi^{-1}(\Omega_\Chi)\simeq \mcU^2\times \Omega_\Chi
\end{equation}
such  that~${\eta_2}\restrict{\pi_\Chi^{-1}(\Omega_\Chi)}$ under the above isomorphism is given by~$$\eta_2(\tilde{u}, x)= (\tilde{u}, \bar{\eta}_2(x)), \,\,\,\text{for $\tilde{u}\in \mcU^2$ and $x\in \Omega_\Chi$.}$$

Since $\Li$~is $G^3$\yh-equivariant with $G^3$ acting on~$\X$ componentwise, we get that~$\Li\restrict{\ccdX_\bs}$ and  $\Li\restrict{\ccChi_\bs}$ are 
$\mcU^2$\yh-equivariant. Since $\mcU^2$ acts freely on~$\ccdX_\bs$ (resp.\ $\ccChi_\bs$), $\Li\restrict{\ccdX_\bs}$ (resp.\  $\Li\restrict{\ccChi_\bs}$) descends to a unique line bundle~$\bar\Li$ over~$\mcU^2\backslash\ccdX_\bs$ (resp.\ $\mcU^2\backslash\ccChi_\bs$). Hence, under the decompositions
\eqref{eqn4.4.1},
\begin{equation} \label{eqn4.4.2}
\Li\restrict{\mcU^2\times \Omega_\X}=\mathscr{O}_{\mcU^2}\boxtimes \bar{\Li}\restrict{\Omega_\X},\,\,\,\text{and}\,\, \Li\restrict{\mcU^2\times \Omega_\Chi}=\mathscr{O}_{\mcU^2}\boxtimes \bar{\Li}\restrict{\Omega_\Chi}.
\end{equation}
Now, the map~$$\bar\eta_2: \mcU^2\backslash\ccChi_\bs \to \mcU^2\backslash\ccdX_\bs$$
is proper. To prove this, consider the projection~$$\pi_2:\mcU^2\backslash (X_v^P\times \ccdX_\bs) =X_v^P\times  (\mcU^2\backslash\ccdX_\bs) \to \mcU^2\backslash\ccdX_\bs$$
with $\mcU^2$ acting on~$X_v^P\times \ccdX_\bs$ as in \eqref{neweqn4.4.2}. This is clearly a projective morphism. Now, 
$$\bar\eta_2 =( \pi_2)\restrict{\mcU^2\backslash\ccChi_\bs}.$$
Moreover, $\mcU^2\backslash\ccChi_\bs$~is a closed subset of~$\mcU^2\backslash (X_v^P\times \ccdX_\bs)$ (as can easily be seen) and hence
$\bar\eta_2 $~is a projective morphism. 

Further, $\bar\eta_2 $~is  a birational map since  so is $\eta_2$ (cf. Proof of Lemma~\ref{newlemma10}).

By the following lemma, $\bar\eta_2\left(\mcU^2\backslash (\ccChi_\bs\backslash
\tilde\Chi_\bs)\right)$~is of codimension~$\geq 2$ in~$\mcU^2\backslash\ccdX_\bs$. Moreover, $\mcU^2\backslash\ccdX_\bs$~is normal (cf. \cite[Proposition~3.2]{KS:Gpol}). In fact, it is smooth (cf. \cite[$\S$10]{Kumar:positivity}). Thus, by Proposition~\ref{prop:Zariski}, the restriction map 
\begin{equation}\label{eqn4.4.3}\Ho(\Omega_\Chi, \bar\Li)\to \Ho(\Omega_\Chi', \bar\Li)\,\,\,\text{is an isomorphism},
\end{equation}
for any open subset~$\Omega_\X \subset \mcU^2\backslash\ccdX_\bs$ over which $\pi_\X$ admits a section and $\Omega_\Chi := \bar\eta_2^{-1}(\Omega_\X)$, where $\Omega_\Chi':= \Omega_\Chi \cap \left(\mcU^2\backslash (\ccChi_\bs\cap \tilde\Chi_\bs)\right)$. But, by the decomposition \eqref{eqn4.4.2}
\begin{align}\label{eqn4.4.4}\Ho(\pi_\Chi^{-1}(\Omega_\Chi),\Li)&\simeq \Ho(\mcU^2\times \Omega_\Chi, \mathscr{O}_{\mcU^2}\boxtimes \bar\Li)
\notag\\
&=\varprojlim_n\, \CC[\mcU_n^2]\otimes \Ho( \Omega_\Chi,  \bar\Li),
\end{align}
where  $\{\mcU_n\}_{n\geq 0}$~is a filtration of~$\mcU$ giving the ind-variety structure. Similarly,
\begin{equation}\label{eqn4.4.5}\Ho(\pi_\Chi^{-1}(\Omega_\Chi'),\Li)
=\varprojlim_n\, \CC[\mcU_n^2]\otimes \Ho( \Omega_\Chi',  \bar\Li).
\end{equation}
Combining the Equations~\eqref{eqn4.4.3} -  \eqref{eqn4.4.5}, we get that the restriction map~$$\Ho(\pi_\Chi^{-1}(\Omega_\Chi),\Li) \to \Ho(\pi_\Chi^{-1}(\Omega_\Chi'),\Li)$$
is an isomorphism (use \cite[Chap.~II, Proposition~9.1]{Hart}). Since $\{\pi_\Chi^{-1}(\Omega_\Chi)\}$ provides an open cover of~$\ccChi_\bs$, we get that the restriction map~$$\Ho(\ccChi_\bs,\Li) \to \Ho(\tilde\Chi_\bs\cap \ccChi_\bs,\Li) $$ is
an isomorphism. This proves the lemma modulo Lemma~\ref{newlemma13} below.
\end{proof}

\subsection{Smallness of the boundary of~$\tilde\Chi_\bs$}

The goal of this subsection is to prove the following lemma. We refer to \cite[$\S$7]{BKR} for some parallel arguments. 

\begin{lemma}\label{newlemma13}
 With the notation as in the proof of Lemma~\ref{lem:isomi7}, the image $\bar\eta_2\left(\mcU^2\backslash (\ccChi_\bs\backslash
\tilde\Chi_\bs)\right)$ is of codimension~$\geq 2$ in~$\mcU^2\backslash\ccdX_\bs$.
\end{lemma}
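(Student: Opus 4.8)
The plan is to reduce to a dimension count in a finite-dimensional smooth variety, dispatch the ``deep'' boundary strata by a crude count, and treat the codimension-one boundary strata using the coefficient-one hypothesis.

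Since $\mcU^2\backslash(-)$ is a principal $\mcU^2$-bundle (a smooth surjection with irreducible fibres) it preserves codimensions, and $\ccChi_\bs\setminus\tilde\Chi_\bs$ is $\mcU^2$-stable; so it is enough to bound the codimension of the image of $\ccChi_\bs\setminus\tilde\Chi_\bs$, and one works throughout inside the finite-dimensional smooth variety $\mcU^2\backslash\ccdX_\bs$ and with the proper birational morphism $\bar\eta_2$ (recall from the proof of Lemma~\ref{newlemma10} that $\eta_2$ is proper and birational and that $\mcU^2\backslash\ccdX_\bs$ is smooth); I keep writing $\ccChi_\bs,\ccdX_\bs,\eta_2$ for the quotients. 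A point $(y,g_1\uo^-,g_2\uo^-,\uo)$ of $\ccChi_\bs$ fails to lie in $\tilde\Chi_\bs$ precisely when $g_1^{-1}y\notin\tilde X^{w_1}_P$, or $g_2^{-1}y\notin\tilde X^{w_2}_P$, or $y\notin\tilde X_v^P$; accordingly $\ccChi_\bs\setminus\tilde\Chi_\bs=D_1\cup D_2\cup D_3$, and by the symmetry of the construction it suffices to bound $\eta_2(D_3)$, the others being handled identically (using $\overline{Pw_i^{-1}\uo^-}=\overline{Bw_i^{-1}\uo^-}$ from \eqref{eq:89}). Since $X_v^P\setminus\tilde X_v^P=\bigsqcup_u\cX_u^P$, the union over $u\in W^P$ with $u<v$ and $u\notin\{s_\alpha v:\alpha\in\Delta^-(v)\}$, one writes $D_3=\bigcup_u D_3^{(u)}$ with $D_3^{(u)}=\{(y,\ldots)\in\ccChi_\bs:y\in\cX_u^P\}$, and it is enough to prove $\codim_{\ccdX_\bs}\eta_2(D_3^{(u)})\geq2$ for each such $u$.

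Fix such a $u$. Projecting $D_3^{(u)}$ onto its $y$-coordinate and using \eqref{eq:89} to identify, for $y=hP/P$, the locus of admissible $g_i\uo^-$ with $y\in g_iX^{w_i}_P$ with the intersection of the translate $h\,\overline{Bw_i^{-1}\uo^-}$ (of codimension $\ell(w_i)$) with the relevant open cell, one obtains $\dim D_3^{(u)}=\ell(u)+\dim\ccdX_\bs-\ell(w_1)-\ell(w_2)$. Since $\ell(w_1)+\ell(w_2)=\ell(v)$ (the degrees must match for $\epsilon_P^v$ to occur in $\epsilon_P^{w_1}\odot_0\epsilon_P^{w_2}$), this reads $\codim_{\ccdX_\bs}D_3^{(u)}=\ell(v)-\ell(u)$, hence $\codim_{\ccdX_\bs}\eta_2(D_3^{(u)})\geq\ell(v)-\ell(u)\geq2$ as soon as $\ell(u)\leq\ell(v)-2$. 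This disposes of all strata coming from Schubert cells of codimension $\geq2$, and the analogous computation disposes of the corresponding strata of $D_1$ and $D_2$.

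There remains the case $\ell(u)=\ell(v)-1$: then $u$ is a covering of $v$ in $W^P$ which is \emph{not} of the special form $s_\alpha v$ ($\alpha$ simple with $\ell(s_\alpha v)=\ell(v)-1$), the piece $D_3^{(u)}$ is a divisor of $\ccChi_\bs$, and the crude count only yields codimension one. The idea is to show that $\eta_2$ \emph{contracts} this divisor, i.e.\ that over the generic point of $\eta_2(D_3^{(u)})$ the fibre $g_1X^{w_1}_P\cap g_2X^{w_2}_P\cap\cX_u^P$ is positive-dimensional; since $\ccdX_\bs$ is smooth — so that a proper birational morphism onto it cannot carry a divisor isomorphically onto a divisor — this forces $\codim_{\ccdX_\bs}\eta_2(D_3^{(u)})\geq2$. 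This is exactly where the hypothesis that the coefficient of $\epsilon_P^v$ equals $1$ (and is not merely nonzero) is used: that coefficient is either $0$ or the ordinary structure constant $d_v^{w_1,w_2}$, so the hypothesis gives $d_v^{w_1,w_2}=1$ (re-proving that $\eta_2$ is birational and that for generic $(g_1,g_2)$ the triple intersection is a single reduced point in the open cell $\cX_v^P$), and it moreover makes the triple $(w_1,w_2,v)$ \emph{Levi-movable} in the sense of condition (iii) of \S\ref{sec:wcd}. One then uses Levi-movability — together with a Kleiman-type genericity argument for the diagonal $B$-action on $\cX_u^P$ and the $B^{\pm}$-actions on the two opposite Schubert varieties, and the inequality $\ell(v)>\ell(u)$ — to show that, as $(g_1,g_2)$ degenerates onto $\eta_2(D_3^{(u)})$, the unique generic intersection point cannot specialize to a reduced isolated point of $\cX_u^P$: the only boundary cells into which the intersection can move ``with multiplicity one'', compatibly with the Levi-grading, are the special cells $s_\alpha v$ — which is precisely why $\tilde X_v^P$ was defined to contain exactly those. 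Hence the limiting intersection is positive-dimensional along $D_3^{(u)}$ and $\eta_2$ contracts it; the codimension-one strata of $D_1$ and $D_2$ are treated the same way, with $s_\alpha w_i$ ($\alpha\in\Delta^+(w_i)$) in the special role. I expect this last step — extracting from Levi-movability that the non-special codimension-one boundary cells give divisors contracted by $\eta_2$, the analogue of the transversality analysis of \cite{GITEigen} in the finite-dimensional case — to be the main obstacle.
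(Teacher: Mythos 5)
Your overall architecture is the same as the paper's: stratify $\ccChi_\bs\setminus\tilde\Chi_\bs$ by boundary cells, dispose of the strata of expected codimension $\geq 2$ by a dimension count (the paper does this via strict inclusions of the irreducible varieties $\ccChi_\bs(w_1',w_2',v')$, which amounts to the same thing), and for the codimension-one strata indexed by a non-simple covering reflection combine Lemma~\ref{lem:divcodimtwo} with the coefficient-one hypothesis. The problem is that the decisive step --- that at a point of a non-special boundary cell $\cX^P_{s_\beta v}$ (or $\cX_P^{s_\beta w_i}$), with $\beta$ a non-simple positive real root, the intersection $gX_v^P\cap g_1X^{w_1}_P\cap g_2X^{w_2}_P$ cannot be transverse --- is exactly what you defer to ``a Kleiman-type genericity argument \dots compatibly with the Levi-grading,'' and you yourself flag it as the main obstacle. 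That step is the entire content of Proposition~\ref{prop:freg} and Corollary~\ref{cor:nontransv}, and it carries essentially all the weight of the lemma; the assertion that ``the only boundary cells into which the intersection can move with multiplicity one are the special cells $s_\alpha v$'' is the statement to be proved, not an available tool.

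Concretely, what is missing is: the filtration $(\lg/\lp)_{\leq i}$ by eigenvalues of $\ad(x_P)$ and the invariants $d_i(z,Z)$, $d^i(z,Z)$; the semicontinuity argument (maximality of the rank of the bundle map $\varphi_i$ on the open cell $BvP$) giving $d_i(\dot w,X_v^P)\geq d_i(\dot v,X_v^P)$ for all $i$ when $w=s_\beta v$ covers down to $v$; the identity obtained from $\rho-w^{-1}\rho-(\rho-v^{-1}\rho)=-\langle\rho,\beta^\vee\rangle\,w^{-1}\beta$, which shows that equality of all the $d_i$ forces $\langle\rho,\beta^\vee\rangle=1$, i.e.\ $\beta$ simple; and finally the observation that the nonvanishing of the $\odot_0$-coefficient forces, for every $i$, the equality $\dim\bigl(T_{\dot e}(v^{-1}X_v^P)\cap(\lg/\lp)_{\leq i}\bigr)=\sum_{j=1,2}\dim\bigl((T_{\dot e}(w_j^{-1}X^{w_j}_P)+(\lg/\lp)_{\leq i})/T_{\dot e}(w_j^{-1}X^{w_j}_P)\bigr)$, so that the strict inequality at some $i_o$ obstructs injectivity of the tangent map on the piece $(\lg/\lp)_{\leq i_o}$. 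A smaller point: your parenthetical ``a proper birational morphism onto a smooth variety cannot carry a divisor isomorphically onto a divisor'' is false as stated (the identity map does exactly that); what you need, and what the paper isolates as Lemma~\ref{lem:divcodimtwo}, is that a divisorial image forces finite generic fibers, after which the contradiction comes from the intersection number $n^v_{w_1,w_2}=1$ together with a non-transverse point of multiplicity $\geq 2$, rather than from exhibiting a positive-dimensional fiber directly.
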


This lemma will be a consequence of the nontransversality
Corollary~\ref{cor:nontransv}, which in turn is a  consequence of
Proposition~\ref{prop:freg}. 

\bigskip
Set, for~$i\in\mathbb{N}:=\{0, 1, 2, \dots \}$,
\begin{equation}
  \label{eq:3}
  (\lg/\lp)_i:=\{\xi\in \lg/\lp\,:\,\ad(x_P)\cdot \xi=-i\xi\},\,\,\,\text{where $x_P:=\sum_{\alpha_j\in \Delta\backslash \Delta(P)}\,x_j$}
\end{equation}
and
\begin{equation}
  \label{eq:5}
  (\lg/\lp)_{\leq i}:=\bigoplus_{j\leq i}(\lg/\lp)_j.
\end{equation}
Note that the $(\lg/\lp)_{\leq i}$'s  form a $P$\yh-stable filtration of~$\lg/\lp$.

Let $Z\subset G/P$~be a locally closed finite-dimensional subvariety
of~$G/P$ and let $z$~be a point of~$Z$. Write $z=gP/P$. Set, for~$i\in\NN$,
\begin{equation}
  \label{eq:4}
  d_i(z,Z):=\dim\left(
T_{\dot{e}}(g\inv Z)\cap (\lg/\lp)_{\leq i}
\right),\,\,\,\text{where $\dot{g}:=gP/P\in G/P$}.
\end{equation}

This indeed does not depend on the choice
of~$g$ such that~$z=gP/P$. Observe that\break$d_0(z,Z) = 0$,  $d_n(z,Z)=\dim T_zZ$ for~$n$ large enough, and that~$i\mapsto d_i(z,Z)$~is non-decreasing.
Define, for any~$i\in \mathbb{N}$,
$$\bar{d}_i(z, Z)= d_i(z,Z) - d_{i-1}(z,Z),$$
where we declare $ d_{-1}(z,Z)=0.$ Thus, $\bar{d}_m(z, Z) = 0$, for~$m>n$. 

Similarly, let~$z\in Z\subset G/P$, where $Z$~has finite-codimension. Write $z=gP/P$. Set, for~$i\in\NN$,
\begin{equation}
  \label{eq:41}
  d^i(z,Z):=\dim\left(
\frac{T_{\dot{e}}(g\inv Z)+(\lg/\lp)_{\leq i}}{T_{\dot{e}}(g\inv Z)}
\right).
\end{equation}

Again this  does not depend on the choice
of~$g$ such that~$z=gP/P$. Observe that\break$d^0(z,Z)=0$, that~$d^n(z,Z)$
is the codimension of~$T_zZ$ for~$n$ large enough, and that ${i\mapsto d^i(z,Z)}$ is non-decreasing.

\begin{PROP}
\label{prop:freg}
  Let~$v\in W^P$ and $\beta$ be a positive real root such that~$w=s_\beta v \in W^P$.

  \begin{enumerate}[label={\rm(\roman*)}]
  \item If $\ell(w)=\ell(v)-1$, then 
$$
 d_i(\dot{w},X_v^P)\geq d_i(\dot{v},X_v^P),\,\,\,\forall i\in\mathbb{N}.
$$
Moreover, if $\beta$~is {\bf not} a simple root,
$$
 d_{i_o}(\dot{w},X_v^P)>d_{i_o}(\dot{v},X_v^P), \,\,\, \text{for some}\,\, i_o\in\mathbb{N}.
$$
\item If $\ell(w)=\ell(v)+1$, then 
$$
 d^i(\dot{w},X^v_P)\leq d^i(\dot{v},X^v_P),\,\,\,\forall i\in\mathbb{N}.
$$
Moreover, if $\beta$~is {\bf not} a simple root,
$$
 d^{i_o}(\dot{w},X^v_P)<d^{i_o}(\dot{v},X^v_P),\,\,\,\text{for some}\,\, i_o\in\mathbb{N}.
$$
  \end{enumerate}
\end{PROP}

\begin{proof}
We first translate the first assertion in a combinatorial statement in
terms of roots. Given a $T$\yh-vector space~$E$, we denote by~$\Phi(E)$
the set of weights of~$T$ acting on~$E$.

Let~$\Phi^+$ (resp.\ $\Phi^-$) be the set of positive (resp. \ negative) roots. Since $T_{\dot{e}}(v\inv X_v^P)$~is multiplicity free as a $T$\yh-module, and
$\Phi(T_{\dot{e}}(v\inv X_v^P))=\{\theta\in\Phi^-\,:\,v\theta\in\Phi^+\}$, 
we have
\begin{equation}
  \label{eq:42}
   d_i(\dot{v},X_v^P)=\sharp 
\{\theta\in\Phi^-\,:\,v\theta\in\Phi^+{\rm\ and\ }-\theta(x_P)
\leq i\}, \,\,\,\forall i\geq 1.
\end{equation}
Consider the unique $T$\yh-stable curve~$\bl$ containing both $\dot{v}$ and
$\dot{w}$. Observe that~$\bl$~is isomorphic to~$\PP^1$,
$\Phi(T_{\dot{v}}\bl)=\{\beta\}$, $\Phi(T_{\dot{w}}\bl)=\{-\beta\}$ and
$\bl$~is contained in~$X_v^P$. 
Moreover, $X_w^P$~is contained in~$X_v^P$ and 
\begin{equation}
  \label{eq:46}
  T_{\dot{w}}X_v^P=T_{\dot{w}}X_w^P\oplus T_{\dot{w}}\bl.
\end{equation}
After translating by~$w\inv$, equality~\eqref{eq:46} implies that~$$
\Phi(T_{\dot{e}}(w\inv X_v^P))=\Phi(T_{\dot{e}}(w\inv X_w^P))\cup \{-w\inv \beta\}.
$$
It follows that 
\begin{equation}
  \label{eq:47}
  d_i(\dot{w},X_v^P)=\sharp 
\{\theta\in\Phi^-\,:\,w\theta\in\Phi^+{\rm\ and\ }
-\theta (x_P)\leq i\}+\delta^{(w^{-1}\beta)(x_P)}_i,\,\,\, \forall i\geq 1,
\end{equation}
where $\delta^m_i = 1 $ if $m\leq i$ and $0$ otherwise.

 We deduce that the
first assertion of the proposition is equivalent to~$\forall i\geq 1$:
\begin{equation}
  \label{eq:48}
  \begin{array}{ll}
 & \sharp 
\{\theta\in\Phi^-\,:\,w\theta\in\Phi^+{\rm\ and\ }-
\theta (x_P)\leq i\}+\delta^{(w^{-1}\beta)(x_P)}_i
   \\
&\geq\sharp 
\{\theta\in\Phi^-\,:\,v\theta\in\Phi^+{\rm\ and\ }   -
\theta (x_P)\leq i\},
\end{array}
\end{equation}
and the existence of~$i_o$ with a strict inequality~\eqref{eq:48} if
$\beta$~is not simple.

We now translate the second assertion of the proposition in a
combinatorial statement. 
First observe that, since $v\in W^P$,  
$$
\Phi\left(\frac{T_{\dot{e}}(G/P)}{T_{\dot{e}}(v\inv X^v_P)}\right)=
\{\theta\in\Phi^-\,:\,v\theta\in\Phi^+\}.
$$
We deduce
that 
\begin{equation}
  \label{eq:49}
\quad 
d^i(\dot{v},X^v_P)=\sharp 
\{\theta\in\Phi^-\,:\,v\theta\in\Phi^+{\rm\ and\ }    -
\theta (x_P)
\leq i\},\,\,\,  \forall i\geq 1.
\end{equation}
Now,  since $\ell(w)=\ell(v)+1$, $X_P^v\supset X_P^w$, $\bl$~is contained in~$X_P^v$, $\Phi(T_{\dot{w}}\bl)=\{\beta\}$ and
$\Phi(T_{\dot{v}}\bl)=\{-\beta\}$. 
Moreover, we have the following exact sequence~$$
0\longto T_{\dot{w}}\bl\longto \frac{T_{\dot{w}}(G/P)}{T_{\dot{w}} X^w_P}\longto
\frac{T_{\dot{w}}(G/P)}{T_{\dot{w}} X^v_P}\longto 0.
$$
After translation by~$w\inv$, we obtain that~$$
\Phi\left(\frac{T_{\dot{e}}(G/P)}{T_{\dot{e}} (w\inv X^w_P)}\right)=
\Phi\left(\frac{T_{\dot{e}}(G/P)}{T_{\dot{e}} (w\inv X^v_P)}\right)
\sqcup\{w\inv \beta\}.
$$
This implies
that
\begin{equation}
  \label{eq:50}
  d^i(\dot{w},X^v_P)=\sharp 
\{\theta\in\Phi^-\,:\,w\theta\in\Phi^+{\rm\ and\ }-\theta(x_P)
\leq i\}-\delta^{-(w^{-1}\beta)(x_P)}_i,\,\,\,\forall i\geq 1.
\end{equation}
With \eqref{eq:49} and \eqref{eq:50}, the second assertion of the
proposition is equivalent to~$  \forall i\geq 1$:
\begin{equation}
  \label{eq:51}
  \begin{array}{l}
\sharp 
\{\theta\in\Phi^-\,:\,v\theta\in\Phi^+{\rm\ and\ }-\theta(x_P)
\leq i\}
 \\
\geq\sharp 
\{\theta\in\Phi^-\,:\,w\theta\in\Phi^+{\rm\ and\ }-\theta(x_P)\leq i\}-\delta^{-(w^{-1}\beta)(x_P)}_i,
  \end{array}
\end{equation}
with a strict inequality for some~$i_o$, if $\beta$~is not simple.

Now, observe that given~$(v,w)$ such that~$w=s_\beta v$ and
$\ell(w)=\ell(v)+1$, one gets $(v',w')$ such that~$w'=s_\beta v'$ and
$\ell(w')=\ell(v')-1$ by setting $w'=v$ and $v'=w$. 
By \eqref{eq:48} and \eqref{eq:51}, the first assertion for~$(v',w')$
implies the second one for~$(v,w)$ (note that~${w'}\inv \beta=-w\inv
\beta$).
It is now sufficient to prove the first assertion.\\
 
From now on, we assume that~$\ell(w)=\ell(v)-1$.
Recall that we denote $v'\to v$ if $v'\in W^P$, $\ell(v')=\ell(v)-1$ and $v'\leq v$. 
Set~$$
\hat{X}_v^P=\cX^P_v\cup\left(\bigcup_{v'\to v}\cX^P_{v'}\right),\,\,\,\text{where $\cX^P_v:= BvP/P$}.
$$ 
Then, $\hat{X}_v^P$~is a smooth open subset of~$X_v^P$. Set~$$
\hat{Y}_v^P=\pi\inv (\hat{X}_v^P),
$$
where $\pi\,:\,G\longto G/P$~is the natural projection.
Define two vector bundles over~$\hat{Y}_v^P$:
$$
{\mathcal V}:=\bigcup_{g\in \hat{Y}_v^P}(\{g\}\times T_{\dot{e}} (g\inv
X_v^P))\longto \hat{Y}_v^P,
$$
and the trivial bundle~$$
\eps_i:=\hat{Y}_v^P\times\frac{T_{\dot{e}}(G/P)}{(\lg/\lp)_{\leq i}},
$$
for any fixed $i\in\mathbb{N}$.
The inclusion  $T_{\dot{e}} (g\inv X_v^P)\subset T_{\dot{e}}(G/P)$ induces a bundle
map~$$
\varphi_i\,:\, {\mathcal V}\longto\eps_i.
$$
  On the open subset~$BvP$, the rank of~$\varphi_i$~is constant since
$$
T_{\dot{e}}((bvp)\inv X_v^P)=T_{\dot{e}}(p\inv v\inv b\inv X_v^P)=p\inv
T_{\dot{e}}(v\inv X_v^P),
$$
and $(\lg/\lp)_{\leq i}$~is $P$\yh-stable. 

On the other hand, the subset of points in~$\hat{Y}_v^P$, where the rank of~$\varphi_i$~is maximum is  open.
Hence, this rank is maximum at any point of~$BvP\subset \hat{Y}_v^P$;
in particular, at~$v$. This shows the inequalities of the first
assertion of the proposition.

Note that~$$
\rho-v\inv \rho=-\sum_{\theta\in\Phi^-\cap v\inv \Phi^+}\theta.
$$
Hence,
$$
(\rho-v\inv \rho)(x_P)=\sum_{\theta\in\Phi^-\cap v\inv
  \Phi^+}
 -\theta (x_P).
$$
But by the Equation~\eqref{eq:42},
$$
 \sharp 
\{\theta\in\Phi^-\,:\,v\theta\in\Phi^+{\rm\ and\ }-\theta(x_P)
=i\}=d_i(\dot{v},X_v^P)-d_{i-1}(\dot{v},X_v^P), \,\,\,\forall i\geq 1.
$$
Hence,
$$
\begin{array}{ll}
  (\rho-v\inv \rho)(x_P)&=\sum_{j\geq
                                       1}j\bar{d}_j(\dot{v},X_v^P)\\
&=\ell(v)+ \sum_{j\geq
                                       2}(j-1) \bar{d}_j(\dot{v},X_v^P),
\end{array}
$$
since, by  the Equation~\eqref{eq:42},  $d_m=\ell(v)$ for large enough $m$. Similarly,
$$
  (\rho-w\inv \rho)(x_P)=\ell(w)+ \sum_{j\geq
                                       2}(j-1) \bar{d}_j(\dot{w},X_w^P).
$$
Since
$\ell(w)=\ell(v)-1$, we get
\begin{equation}
  \label{eq:53}
  (\rho-w\inv \rho-(\rho-v\inv \rho))(x_P)=
-1+\sum_{j\geq  2}\,(j-1)\left(\bar{d}_j(\dot{w},X_w^P)-\bar{d}_j(\dot{v},X_v^P)\right).
\end{equation}
On the other hand, since $w=s_\beta v$, we get 
\begin{align}
  \label{eq:53'}
  \rho-w\inv \rho-(\rho-v\inv \rho)&=-w\inv \rho+w\inv s_\beta \rho\notag\\
&=w\inv(s_\beta\rho-\rho)\notag\\
&=-\langle \rho, \beta^\vee \rangle w\inv \beta.
\end{align}
Combining the Equations~\eqref{eq:53} and  \eqref{eq:53'}, we get
$$
1+\sum_{j\geq  2}\,(j-1)\left(\bar{d}_j(\dot{v},X_v^P)-\bar{d}_j(\dot{w},X_w^P)\right)=\langle
\rho, \beta^\vee\rangle 
(w\inv \beta)(x_P).
$$
But by the Equation~\eqref{eq:42} (for $v$ replaced by~$w$) and the Equation~\eqref{eq:47}, we have
$$
d_i(\dot{w},X_v^P)=d_i(\dot{w},X_w^P)+
\delta^{(w^{-1}\beta)(x_P)}_i,\,\,\,\forall i\geq 1
$$
and hence (for $k:= (w\inv \beta)(x_P)$)
\begin{align}
  \label{eq:54}
1+&\sum_{j\geq 2, j\neq k}\,(j-1)\left(\bar{d}_j(\dot{v},X_v^P)-\bar{d}_j(\dot{w},X_v^P)\right)+ (k-1)\notag\\
& \left(\bar{d}_k(\dot{v},X_v^P)
-\bar{d}_k(\dot{w},X_v^P) +1\right) =
\langle \rho, \beta^\vee \rangle (w\inv \beta)(x_P).
\end{align}
If possible, assume that 
\begin{equation}
  \label{eq:97}
 d_j(\dot{v},X_v^P)=d_j(\dot{w},X_v^P),\,\,\,  \forall j\geq 1.
\end{equation}
Equivalently,
$$
 \bar{d}_j(\dot{v},X_v^P)=\bar{d}_j(\dot{w},X_v^P),\,\,\,  \forall j\geq 1.
$$
Then, the Equation~\eqref{eq:54} implies that~$$k=\langle\rho, \beta^\vee\rangle k.$$
But, $\langle \rho,  \beta^\vee
\rangle\geq 1$. 
Hence,
$$
\langle \rho, \beta^\vee\rangle=1.
$$
Observe that $k\ne0$ since $v=ws_{w^{-1}\beta}$ and $v,w\in W^P$. We deduce that~$\beta$~is simple if \eqref{eq:97} holds. This ends the
proof of the proposition.
\end{proof}

\begin{CORO}
\label{cor:nontransv}
  Let $w_1,w_2,v\in W^P$~be as in Theorem~\ref{th:restCisom}. In particular,  $\ell(v)=\ell(w_1)+\ell(w_2)$. 
Let~$x\in G/P$ and 
  $g,g_1,g_2$ in~$G$ be such that~$x$ belongs to~$g \hat{X}_v^P\cap g_1
 \hat{X}^{w_1}_P\cap g_2 \hat{X}^{w_2}_P$, where $ \hat{X}_v^P$~is as defined in the proof of Proposition~\ref{prop:freg} and
$$\hat{X}_P^w:=\cX_P^w\cup\left(\bigcup_{w\to w'}\cX_P^{w'}\right),\,\,\,\text{where $\cX_P^{w'}:= B^-w'P/P$}. $$
We assume that there exists a non-simple real
  root $\beta$ such that one of the  following two conditions holds:
  \begin{enumerate}[label={\rm(\roman*)}]
  \item $\ell(s_\beta v)=\ell(v)-1$, $s_\beta v\in W^P$ and $x\in g\cX^P_{s_\beta
    v}$.
\item  $\ell(s_\beta w_1)=\ell(w_1)+1$, $s_\beta v\in W^P$ and $x\in g_1\cX_P^{s_\beta
    w_1}$.
  \end{enumerate}
Then, the intersection~$g \hat{X}^P_v\cap g_1
  \hat{X}^{w_1}_P\cap g_2 \hat{X}^{w_2}_P$~is {\it not} transverse at~$x$.
\end{CORO}

\begin{proof}
  It suffices to prove that the standard linear map~$$
\theta\,:\,T_x (gX_v^P)\longto\frac{T_x (G/P)}{T_x(g_1X^{w_1}_P)}\oplus
\frac{T_x (G/P)}{T_x(g_2X^{w_2}_P)}
$$
is not an isomorphism.
Write $x=hP/P$. 
Up to changing $(g,g_1,g_2)$ by~$(h\inv g,h\inv g_1,h\inv g_2)$, we
may assume that~$h=e$.

Observe that~\begin{equation}\tag{$*$}
\theta\left(T_{\dot{e}}(gX_v^P)\cap (\lg/\lp)_{\leq i}
\right)\subset
\frac{(T_{\dot{e}}(g_1 X^{w_1}_P))+(\lg/\lp)_{\leq i}}{T_{\dot{e}}(g_1 X^{w_1}_P)}
\oplus
\frac{(T_{\dot{e}}(g_2 X^{w_2}_P))+(\lg/\lp)_{\leq i}}{T_{\dot{e}}(g_2 X^{w_2}_P)}.
\end{equation}
Moreover, since $\eps_P^v$ occurs with coefficient~$1$ (in particular, nonzero)  in 
the deformed product $\eps_P^{w_1}\odot_0 \eps_P^{w_2}$ by assumption, $\forall i\in \mathbb{N}$,
\begin{align*}\dim\left(T_{\dot{e}}(v^{-1}X_v^P)\cap (\lg/\lp)_{\leq i}
\right)&=
\dim\left(\frac{T_{\dot{e}}(w_1^{-1} X^{w_1}_P)+(\lg/\lp)_{\leq i}}{T_{\dot{e}}(w_1^{-1} X^{w_1}_P)}\right)\\
&\quad+\dim\left(
\frac{T_{\dot{e}}(w_2^{-1} X^{w_2}_P)+(\lg/\lp)_{\leq i}}{T_{\dot{e}}(w_2^{-1} X^{w_2}_P)}\right)
\end{align*}
(cf. \cite[Lemma~19]{R:KM1}).
But, Proposition~\ref{prop:freg} implies that, for some~$i_o$,
 the dimension of the first space $T_{\dot e}(gX^P_v)\cap(\mathfrak{g}/\mathfrak{p})_{i_o}$ in ($*$) is greater than 
that of the direct sum.
Hence, the restriction of~$\theta$ to~$T_{\dot{e}}(gX_v^P)\cap (\lg/\lp)_{\leq
  i}$ can not be injective. Thus, $\theta$ can not be an isomorphism.
\end{proof}

\begin{lemma}
  \label{lem:divcodimtwo}
Let $f\,:\,Y\longto X$~be a dominant morphism between two
quasi-projective irreducible varieties of the same dimension. 
Let $D\subset Y$~be an irreducible proper closed subset.

Then, if $\overline{f(D)}$~has codimension one in~$X$, then, for~$x\in D$
general, $f\inv(f(x))$~is finite.
\end{lemma}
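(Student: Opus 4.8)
The plan is to reduce the statement to the classical theorem on the dimension of fibres (Chevalley's upper semicontinuity together with the generic-fibre-dimension formula), applied not to $f$ directly but to its restriction over the codimension-one subvariety $W:=\overline{f(D)}\subseteq X$. Write $n=\dim X=\dim Y$, so that $\dim W=n-1$. Since $f$ is dominant and $W$ is a \emph{proper} closed subset of $X$, the preimage $Y_W:=f^{-1}(W)$ cannot be all of $Y$ (otherwise $f(Y)\subseteq W\subsetneq X$), so $Y_W$ is a proper closed subset of the irreducible variety $Y$ and hence $\dim Y_W\le n-1$. This a priori bound is the crucial input.

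First I would decompose $Y_W=Z_1\cup\dots\cup Z_r$ into irreducible components and analyse the fibre of $f$ over a general point of $W$ one component at a time. If $\overline{f(Z_i)}=W$, then $f|_{Z_i}\colon Z_i\to W$ is a dominant morphism of irreducible varieties, and since $\dim W\le\dim Z_i\le\dim Y_W\le n-1=\dim W$ we have $\dim Z_i=n-1$; hence over a dense open $V_i\subseteq W$ the fibres of $f|_{Z_i}$ have dimension $\dim Z_i-\dim W=0$. If instead $\overline{f(Z_i)}\subsetneq W$, then over the dense open $W\setminus\overline{f(Z_i)}$ the component $Z_i$ contributes nothing to the fibre. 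Intersecting these finitely many dense opens produces a dense open $W^\circ\subseteq W$ over which $f^{-1}(y)=\bigcup_{i}(f|_{Z_i})^{-1}(y)$ is zero-dimensional, hence finite (a zero-dimensional closed subscheme of a quasi-projective variety is a finite set).

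Finally I would transfer this back from $W$ to $D$. Because $f(D)$ is dense in $W$, the restriction $f|_D\colon D\to W$ is dominant, so $D^\circ:=(f|_D)^{-1}(W^\circ)$ is a nonempty open subset of $D$; as $D$ is irreducible it is dense. For every $x\in D^\circ$ we have $f(x)\in W^\circ$ and therefore $f^{-1}(f(x))$ is finite, which is exactly the assertion of the lemma.

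The one point I would take care with — and the only genuine subtlety — is that $f^{-1}(f(x))$ must be controlled as the \emph{full} fibre of $f$, not merely as the fibre of the restriction of $f$ to whichever component of $Y_W$ contains $D$: a priori the positive-dimensional part of $f^{-1}(f(x))$ could sit in a different component of $Y_W$. This is precisely why one runs the component-by-component bookkeeping over \emph{all} components $Z_i$ of $Y_W$ and leans on the bound $\dim Y_W\le n-1$, which forces every component dominating $W$ to be of dimension exactly $n-1$ and thus to have zero-dimensional generic fibres over $W$. No infinite-dimensional phenomena enter here, since $X$ and $Y$ are ordinary finite-dimensional quasi-projective varieties.
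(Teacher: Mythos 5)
Your proof is correct and takes essentially the same route as the paper: both arguments rest on the fact that dominance forces $f^{-1}(\overline{f(D)})$ to be a proper closed subset of $Y$, hence of dimension at most $\dim Y-1$, combined with the theorem on dimensions of fibers of a dominant morphism over the $(\dim X-1)$-dimensional variety $\overline{f(D)}$. The paper compresses this into a short contradiction argument (positive-dimensional general fibers would force $\dim f^{-1}(\overline{f(D)})=\dim Y$, hence $f^{-1}(\overline{f(D)})=Y$, contradicting dominance), whereas you run the same reasoning directly, with careful component-by-component bookkeeping over $f^{-1}(\overline{f(D)})$.
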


\begin{proof}
  Otherwise, the general fibers of the restriction of~$f$ to~$f\inv(\overline{f(D)})$ would have positive dimension. Since
  $\overline{f(D)}$~has codimension one, this implies that~$\dim(f\inv(\overline{f(D)}))=\dim(Y)$ and hence
  $f\inv(\overline{f(D)})=Y$. But, $f$~is assumed to be dominant. A
  contradiction. 
\end{proof}

\begin{proof}[Proof of Lemma~\ref{newlemma13}]
 For~$(w_1',w_2',v')\in (W^P)^3$, we set~$$
\ccChi_\bs(w_1',w_2',v'):=\{(x,g_1\uo^-,g_2\uo_-, \uo)\in X_{v'}^P\times
\ccdX_\bs\,:\,x\in g_1X^{w_1'}_P\cap g_2X^{w_2'}_P\}
$$
and  
$$
\Chi_\bs(w_1',w_2',v'):=\{(x,g_1\uo^-,g_2\uo_-, \uo)\in X_{v'}^P\times
\mathcal{X}_\bs\,:\,x\in g_1X^{w_1'}_P\cap g_2X^{w_2'}_P\}.
$$ 
The set~$\ccChi_\bs\backslash
\tilde\Chi_\bs$~is the union of finitely many subsets of one of the
following types:

\subsubsection*{Type I} $\ccChi_\bs(w_1',w_2',v')$, where
$(w_1',w_2',v')\in (W^P)^3$,
$w_1'\geq w_1$, $w_2'\geq w_2$, $v'\leq v$ and
$\ell(w_1')+\ell(w_2')-\ell(v')\geq 2$.

\subsubsection*{Type II} $\ccChi_\bs(w_1,w_2,v')$, where
$v'\in W^P$,
$v'\leq v$, 
$\ell(v')=\ell(v)-1$ and $v'v\inv$~is not a simple reflection.

\subsubsection*{Type III} $\ccChi_\bs(w_1',w_2,v)$, where
$w_1'\in W^P$,
$w_1'\geq w_1$, 
$\ell(w_1')=\ell(w_1)+1$ and $w_1'w_1\inv$~is not a simple reflection.

\subsubsection*{Type IV} Like type III after exchanging $w_1$ and
$w_2$. 

\medskip

It is sufficient to prove that the image by~$\bar\eta_2$ of each one
of these subsets has codimension at least two in~$\mcU^2\backslash \ccdX_\bs$. 

Consider $(w_1',w_2',v')$ as in type I. There exists
$(w_1'',w_2'',v'')$ such that~$w_1'\geq w_1''\geq w_1$, ${w_2'\geq
w_2''\geq w_2}$, $v'\leq v''\leq v$ and $\ell(w_1'')+\ell(w_2'')-\ell(v'')=1$. 

The point~$(\dot{v}'',v''(w_1'')\inv\uo^-, v''(w_2'')\inv\uo^-)$ belongs
to~$\Chi_\bs(w_1'',w_2'',v'')$ and does not belong to~$\Chi_\bs(w_1',w_2',v')$. 
Hence, $\Chi_\bs(w_1'',w_2'',v'')\backslash \Chi_\bs(w_1',w_2',v')$~is open and
nonempty in~$\Chi_\bs(w_1'',w_2'',v'')$.

To prove the lemma in this type, we can assume that~$\ccChi_\bs(w_1',w_2',v') $~is nonempty, then  so is
$\ccChi_\bs(w_1'',w_2'',v'')$.
Since $\Chi_\bs(w_1'',w_2'',v'')$~is irreducible (cf. $\S$\ref{subsec4.1}), we deduce that the intersection $\left(\Chi_\bs(w_1'',w_2'',v'')\backslash\Chi_\bs(w_1',w_2',v')\right)\cap (G/P\times  \ccdX_s)$~is nonempty.

Thus, we have a strict inclusion $\ccChi_\bs(w_1',w_2',v')\subset
\ccChi_\bs(w_1'',w_2'',v'')$. 
Similarly, we have the  strict inclusion:
 $$\ccChi_\bs(w_1'',w_2'',v'') \subset \ccChi_\bs(w_1,w_2,v).$$
Combining the above two, we get the strict inclusions:
$$
\ccChi_\bs(w_1',w_2',v')\subset \ccChi_\bs(w_1'',w_2'',v'') \subset \ccChi_\bs(w_1,w_2,v).
$$
Since these varieties are irreducible and $\mcU^2$\yh-stable, we deduce
that~$\mcU^2\backslash \ccChi_\bs(w_1',w_2',v')$~is of codimension at
least two in~$\mcU^2\backslash \ccChi_\bs$. 

The lemma follows in this case since $\dim(\mcU^2\backslash
\ccChi_\bs)=\dim(\mcU^2\backslash \ccdX_\bs)$ since  $\bar\eta_2$~is a birational map (cf. Proof of Lemma~\ref{lem:isomi7}).

Let now $(w_1,w_2,v')$ be as in type II.

Assume, for contradiction, that~$\bar\eta_2(\mcU^2\backslash
\ccChi_\bs(w_1,w_2,v'))$~is a divisor.
By Lemma~\ref{lem:divcodimtwo}, there exists $(g_1,g_2)\in G^2$ such
that~$X_v^P\cap g_1X^{w_1}_P\cap g_2X^{w_2}_P$~is finite and there exists
${x\in \cX_{v'}^P\cap g_1\cX_P^{w_1}\cap g_2\cX_P^{w_2}}$; in particular, ~$(x,g_1\uo^-,g_2\uo^-, \uo)\in
\ccChi_\bs(w_1,w_2,v')$.

By Corollary~\ref{cor:nontransv}, the intersection~$\hat{X}_v^P\cap g_1\hat{X}^{w_1}_P\cap g_2\hat{X}^{w_2}_P$~is not transverse
at~$x$. 
Hence, the multiplicity of~$x$ in~$X_v^P\cap g_1X^{w_1}_P\cap g_2X^{w_2}_P$
is at least $2$. 
Since this intersection is finite, this implies that the coefficient of~$\eps_P^v$ in~$\eps_P^{w_1}\cdot \eps_P^{w_2}$: $n_{w_1,w_2}^v\geq
2$ (cf. \cite[Proof of Proposition~3.5]{BrownKumar}). A contradiction!

The last types III and IV work similarly.
\end{proof}

\subsection{Conclusion of the proof of Theorem~\ref{th:restCisom}}
Observe that~$\tilde\Chi_\bs \cap \ccChi_\bs$ being open in the  irreducible $\tilde\Chi_\bs$, $i_6^*$~is injective. 
Combining the results from Subsections~\ref{subsec4.2} - \ref{subsec4.4}, we get that~$$i_6^*\circ \gamma^*\circ\alpha^*\circ \eta^*: \Ho(\X, \Li)^G\to \Ho(\tilde\Chi_\bs \cap \ccChi_\bs,\Li)^B\,\,\,\text{is injective}$$
and
$$i_7^*\circ \eta_2^*\circ i_4^*\circ i_1^*: \Ho(\X, \Li)^G\to \Ho(\tilde\Chi_\bs \cap \ccChi_\bs,\Li)^B\,\,\,\text{is an isomorphism}.$$
From the commutative diagram $(\diamond)$ of Subsection~\ref{subsec4.1}, these two composite maps are equal forcing $\alpha^*$ to be an isomorphism. Thus, we get (from the top horizontal line of the commutative diagram $(\diamond)$) that the restriction map~$$\Ho(\X, \Li)^G \to \Ho(C, \Li)^L\,\,\,\text{is an isomorphism.}$$
This ends the proof of the theorem.\qed

\section{Proof of Theorem~\ref{th:codimface}}

In this section, $P$~is still a standard parabolic subgroup (and not necessarily maximal). We fix $(w_1,w_2, v)\in
(W^P)^{3}$  such that~$\eps_P^v$ occurs with coefficient~$1$ in 
the deformed product
$$\eps_P^{w_1}\odot_0 \eps_P^{w_2}
\in \bigl(\Hd^*(X_P,\ZZ), \odot_0\bigr).$$
In particular, $w_1, w_2\leq v$. Recall the definition of~$\mcD$ and $E_{\alpha, i}$ (for $(\alpha, i)\in \mcD$) from   
  subsection~\ref{subsection5.4}. 

\subsection{On the relative position of~$E_{\alpha,i}$ and $C$}

\begin{PROP} \label{prop:NC} For any~$(\alpha, i) \in \mcD$,
   the ind-variety
$$C:=Lw_1^{-1}\uo^-\times Lw_2^{-1}\uo^-\times Lv^{-1}\uo$$
is not contained in~$E_{\alpha,i}$.
\end{PROP}

To prove Proposition~\ref{prop:NC} we need the following lemma.

\begin{lemma}\label{newlemma12}
Let~$x\in G/P$ and $(g,g_1,g_2)\in G^3$ be such that the intersection~$$
 g_1\cX^{w_1}_P\cap g_2\cX^{w_2}_P\cap g\cX_v^P
$$ 
contains $x$ and is transverse at this point.
Such a choice is possible by \cite[Lemmas~6 and~7]{R:KM1}. 
Then, 
$$
 g_1X^{w_1}_P\cap g_2X^{w_2}_P\cap gX_v^P=\{x\}.
$$ 

In fact, the lemma remains true if we replace  $\cX^{w_i}_P$ (for any~$i=1,2$) by any~$B^-$\yh-stable open subset of~$\cX^{w_i}_P \cup \left(\bigcup_{w_i\to w_i'\in W^P}\,\cX^{w_i'}_P\right)$ and  $ \cX_v^P$ by any~$B$\yh-stable open subset of\break ${\cX_v^P\cup\left(\bigcup_{v'\to v, v'\in W^P}\, \cX_{v'}^P\right)}$. 
\end{lemma}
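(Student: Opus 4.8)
The plan is to establish the set-theoretic equality $Z=\{x\}$ for $Z:=g_1X^{w_1}_P\cap g_2X^{w_2}_P\cap gX_v^P$ by proving two things: \emph{(i)} $x$ is a reduced isolated point of $Z$; and \emph{(ii)} $Z$ is connected. Since $Z$ is naturally a closed subscheme of the finite-dimensional projective Schubert variety $gX_v^P$, it is noetherian, so an isolated point is a clopen subset, and \emph{(i)}$+$\emph{(ii)} force $Z=\{x\}$. The two hypotheses that enter essentially are $\ell(v)=\ell(w_1)+\ell(w_2)$ and $n_{w_1,w_2}^v=1$; the latter holds because the coefficient of $\epsilon_P^v$ in the deformed product $\epsilon_P^{w_1}\odot_0\epsilon_P^{w_2}$ is either $0$ or the structure constant $n_{w_1,w_2}^v$ of the ordinary cup product, so the assumption that it equals $1$ forces $n_{w_1,w_2}^v=1$.

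For \emph{(i)}: because $\ell(v)=\ell(w_1)+\ell(w_2)$, transversality at $x$ says that the natural map
\[
T_x(g\cX_v^P)\longrightarrow \frac{T_x(G/P)}{T_x(g_1\cX^{w_1}_P)}\oplus\frac{T_x(G/P)}{T_x(g_2\cX^{w_2}_P)}
\]
is an isomorphism; its kernel is $T_x(g_1\cX^{w_1}_P)\cap T_x(g_2\cX^{w_2}_P)\cap T_x(g\cX_v^P)$, which therefore vanishes. Since each $\cX$ is smooth and open in the corresponding Schubert variety, $Z$ coincides near $x$ with the scheme-theoretic intersection of the three cells, so $T_xZ=(0)$ and $x$ is a reduced isolated (hence clopen) point of $Z$.

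For \emph{(ii)}, the heart of the argument, I would use the connectedness form of Zariski's main theorem. Consider the closed incidence subvariety
\[
\widehat\Sigma:=\{(a_1,a_2,a_3,y)\in G^3\times G/P\ :\ y\in a_1X^{w_1}_P\cap a_2X^{w_2}_P\cap a_3X_v^P\}.
\]
Projecting onto $y$ realizes $\widehat\Sigma$ as a fibre bundle over $G/P$ (for the diagonal $G$-action), whose fibre over $P/P$ is the product of three subvarieties of $G$, each of the form $\{a\in G:\ a^{-1}P/P\in X\}$ for $X$ a Schubert variety, hence irreducible; so $\widehat\Sigma$ is irreducible of dimension $\dim G^3+\ell(v)-\ell(w_1)-\ell(w_2)=\dim G^3$. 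Let $\pi\colon\widehat\Sigma\to G^3$ be the other projection. All its fibres lie in a bounded family of finite-dimensional projective varieties, so $\pi$ is (ind-)proper; and $\pi$ is birational, since for generic $(a_1,a_2,a_3)$ the Kleiman-type transversality available in the Kac--Moody setting (cf. \cite[Lemma~4.2]{R:KM1}) makes the intersection transverse, hence a single reduced point because its total intersection number is $n_{w_1,w_2}^v=1$. As $G^3$ is normal, Zariski's main theorem (in the spirit of Proposition~\ref{prop:Zariski}) gives that every fibre of $\pi$ is connected; in particular $Z=\pi^{-1}(g_1,g_2,g)$ is connected, and with \emph{(i)} we conclude $Z=\{x\}$. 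The ``In fact'' strengthening is immediate: replacing $\cX^{w_i}_P$ (resp. $\cX_v^P$) by a $B^-$- (resp. $B$-) stable open subset of $\hat{X}_P^{w_i}$ (resp. $\hat{X}_v^P$) does not affect step \emph{(ii)} at all (it only involves the full Schubert intersection $Z$) and leaves step \emph{(i)} unchanged, these being smooth and open in the ambient Schubert varieties so that the tangent-space computation goes through verbatim.

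The main obstacle is not a single conceptual point but the bookkeeping needed to carry step \emph{(ii)} out rigorously in the ind-variety category: irreducibility and dimension of $\widehat\Sigma$, (ind-)properness of $\pi$, and the input that the generic intersection is a single reduced point (Kleiman transversality together with $n_{w_1,w_2}^v=1$). Once these are in place, connectedness of the special fibre $Z$ — hence the lemma — follows. (Alternatively one could avoid Zariski's main theorem by stratifying $Z$ along the Bruhat cells and excluding positive-dimensional components by means of Proposition~\ref{prop:freg}/Corollary~\ref{cor:nontransv} and $n_{w_1,w_2}^v=1$, exactly as in the proof of Lemma~\ref{newlemma13}; the connectedness route seems shorter.)
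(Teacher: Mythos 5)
Your overall strategy is the same as the paper's: transversality at $x$ (together with $\ell(v)=\ell(w_1)+\ell(w_2)$ and the openness of the cells in the Schubert varieties) makes $x$ a reduced isolated point of the full intersection, and the remaining content is connectedness of that intersection, obtained from Zariski's main theorem applied to a proper birational morphism whose fibre it is, birationality coming from $n_{w_1,w_2}^v=1$. The paper's proof is exactly of this shape.

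However, as written your step \emph{(ii)} has a genuine gap: you apply Zariski's main theorem to $\pi:\widehat\Sigma\to G^3$, where both source and target are infinite-dimensional ind-varieties. Proposition~\ref{prop:Zariski} (and every standard form of ZMT/Stein factorization) is a statement about irreducible quasiprojective varieties of finite type; in the ind-setting the notions you invoke --- ``dimension $\dim G^3+\ell(v)-\ell(w_1)-\ell(w_2)=\dim G^3$'', ``birational'', normality of $G^3$, connectedness of fibres of a proper birational map --- are not available without further work, and this is precisely what you dismiss as ``bookkeeping''. In the paper this reduction is the substance of the proof: one translates so that $g=e$, replaces the second and third factors by the cells $u_1U\uo^-\times u_2U\uo^-$ containing $(g_1\uo^-,g_2\uo^-)$, and quotients by a free action of a normal unipotent subgroup $\mcU_1\times\mcU_2$ of finite codimension chosen inside $u_iUu_i^{-1}\cap\Ker\bigl(\theta:U\to\Aut(X_v^P)\bigr)$, so that it acts trivially on $X_v^P$; the induced map $\bar\eta$ is then a proper birational morphism of finite-dimensional varieties with smooth (hence normal) irreducible target, and only there is ZMT applied (birationality being supplied by \cite[\S 4.2]{R:KM1}, i.e.\ the general fibre of $\eta$ is a single point because $n^v_{w_1,w_2}=1$). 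So the missing idea is the finite-dimensionalization by a unipotent quotient killing the action on $X_v^P$; once you insert it, your argument becomes the paper's proof, and the ``In fact'' statement indeed goes through verbatim as you say.
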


\begin{proof}
The strategy of the proof is to reduce the problem to a finite-dimensional situation (by quotient), and then to apply Zariski's main
theorem.

Up to a translation, we may assume that~$g$~is trivial. 
Since $G/B^-=\bigcup_{w\in W}wU\uo^-$, there exists, for~$i=1,2$, $u_i\in
W$ such that~$g_i\uo^-\in u_iU\uo^-$. Consider now
$$
\cChi_\bs=\{(y, h_1\uo^-,h_2\uo^-)\in  X_v^P\times u_1U\uo^-\times
u_2U\uo^- \,:\,
 y\in h_1X^{w_1}_P\cap h_2X^{w_2}_P\}
$$  
and its projection~$\eta$ to~$u_1U\uo^-\times u_2U\uo^-$.

Consider $\theta\,:\,U\longto \Aut(X_v^P)$ obtained by the action as before.
Fix $i\in\{1,2\}$.
Then, $\Ker\,\theta$~has finite-codimension in~$U$ and $U\cap
u_iUu_i\inv$~has finite-codimension in~$u_iUu_i^{-1}$.
It follows that there exists a closed normal subgroup~$\mcU_i$ of~$u_iUu_i\inv$ of finite-codimension such that~$$
\mcU_i\subset u_iUu_i\inv\cap \Ker\,\theta.
$$
Such a $\mcU_i$ can be obtained as a closed subgroup of~$U$ with Lie algebra
$$\Lie\, \mcU_i = \bigoplus_{\beta\in \Phi^+, |\beta|> N}\,\mathfrak{g}_\beta,$$
for large enough $N$ (depending upon $v$ and $u_i$), where, for~$\beta = \sum_j\,n_j\alpha_j$, $|\beta|:=\sum n_j$. 

The group $\mcU_1\times \mcU_2$ acts freely and properly on~$u_1U\uo^-\times u_2U\uo^-$ (and hence on~$\cChi_\bs$). Moreover,  $\eta$~is $(\mcU_1\times
\mcU_2)$\yh-equivariant. After quotient, one gets
$$
\bar\eta\,:\,
(\mcU_1\times \mcU_2)\backslash \cChi_\bs
\longto 
(\mcU_1\times \mcU_2)\backslash (u_1U\uo^-\times u_2U\uo^-).
$$
Observe that~$\mcU_i$ being closed subgroups of finite-codimension in~$u_iUu_i^{-1}$ and $X_v^P$ being finite-dimensional, the domain and the range of~$\bar\eta$ are finite-dimensional varieties and the range of~$\bar\eta$~is smooth and irreducible.

Since the coefficient of~$\eps_v^P$ in~$\eps_{w_1}^P\cdot \eps_{w_2}^P:$ $n_{w_1,w_2}^v=1$, the general fiber of~$\eta$~is one point
(see \cite[$\S$4.2]{R:KM1}). Further, as observed below the Equation~\eqref{eqn6.1.2}, $\Chi_\bs$~is irreducible and hence so is $(\mcU_1\times \mcU_2)\backslash \cChi_\bs$. 
Since the base field is $\CC$, this implies that~$\bar \eta$~is
birational.
Since $X_v^P$~is projective and $X^{w_1}_P$ and $X^{w_2}_P$ are closed in~$G/P$, it is easy to see that  the
map~$\bar\eta$~is proper. 
Now, we can apply Zariski's main theorem \cite[Chap. III, Corollary~11.4]{Hart} to conclude that  the fibers of~$\bar \eta$
are connected. 
But,  by assumption, $[x,g_1\uo^-,g_2\uo^-]$~is isolated in the fiber 
$\bar\eta\inv[g_1\uo^-,g_2\uo^-]$, where $[g_1\uo^-,g_2\uo^-]$ denotes the $(\mathcal{U}_1\times\mathcal{U}_2)$\yh-orbit of $(g_1\uo^-,g_2\uo^-)$.
Then, $\bar\eta\inv[g_1\uo^-,g_2\uo^-]=\{[x,g_1\uo^-,g_2\uo^-]\}$, that
is
$$
 g_1X^{w_1}_P\cap g_2X^{w_2}_P\cap X_v^P=\{x\}.
$$ 
 This proves the first part of the lemma.

The proof for the `In fact' statement in the lemma  is identical.
\end{proof}

\begin{proof}[Proof of Proposition~\ref{prop:NC}]
Since $\eps_P^v$ occurs with coefficient~$1$ in 
the deformed product
$\eps_P^{w_1}\odot_0 \eps_P^{w_2}$, by the proof of 
\cite[Lemma~19]{R:KM1}, there exist $l_1, l_2,
l_3\in L$ such that the intersection
\begin{equation}
(l_1w_1^{-1}\cX_P^{ w_1})\cap (l_2w_2^{-1}\cX_P^{ w_2})\cap (l_3 v^{-1}\cX^P_{v}) 
\end{equation}
is transverse at~$P/P$.
Then, Lemma~\ref{newlemma12} implies that the intersection $$(l_1w_1^{-1}X_P^{ w_1})\cap (l_2w_2^{-1}X_P^{ w_2})\cap (l_3 v^{-1}X^P_{v})$$  is reduced
to~$\{P/P\}$. 
In particular, if $w_1\leq s_\alpha w_1$ and $s_\alpha w_1\in W^P$,
\begin{equation}
(l_1w_1^{-1}X_P^{s_\alpha w_1})\cap (l_2w_2^{-1}X_P^{ w_2})\cap (l_3 v^{-1}X^P_{v}) =\emptyset.
\end{equation}
Then,
\begin{equation}\label{eqn1001}
(l_1w_1^{-1}\uo^-,  l_2w_2^{-1}\uo^-,  l_3v^{-1}\uo)\notin 
G\cdot\left(\overline{Pw_1^{-1}s_\alpha \uo^-}\times
\overline{Pw_2\inv \uo^-}\times \overline{Pv\inv \uo}\right).
\end{equation}
This proves that~$(l_1w_1^{-1}\uo^-,  l_2w_2^{-1}\uo^-,
l_3v^{-1}\uo)$ does not belong to~$E_{\alpha,1}$. 
The proposition follows for~$(\alpha, 1)$. 
The proof for~$(\alpha,i) \in \mcD$ for~$i=2,3$~is identical. 
\end{proof}

\subsection{The line bundles~$\Ni_{\alpha,i}$}
  
The goal of this subsection is to prove that~$\Ni_{\alpha,i}$ belongs
to the face considered in Theorem~\ref{th:codimface}:

\begin{PROP}
  \label{prop:Niface}
For any~$(\alpha, i) \in \mcD$, the center $Z(L)$  of~$L$ acts trivially on the restriction of~$\Ni_{\alpha,i}$
to~$C$, where $C$ is as in Proposition \ref{prop:NC}. 

In fact, for any~$L$\yh-equivariant line bundle~$\Li$ over~$C$ with
$\Ho(C, \Li)^L\neq 0$, $Z(L)$  acts trivially on~$\Li$.
In particular, if we write
$\Ni_{\alpha,i}=\Li^-(\lambda_1)\otimes\Li^-(\lambda_2)\otimes\Li(\mu)$, then
 for all $\alpha_j\not\in\Delta(P)$,
\[
\lambda_1(w_1x_{j})+ \lambda_2(w_2x_{j})-\mu(vx_j)=0. \tag{$I^j_{(w_1, w_2,v)}$}
\]
\end{PROP}

\begin{proof}
Consider a $G$\yh-invariant section~$ \mu_{\alpha,i}$ of~$\Ni_{\alpha,i}$ as guaranteed
 by Corollary \ref{coro6.6.1}.
For any $(\alpha,i)\in\mcD$, by Corollary \ref{coro6.6.1},
$Z(\mu_{\alpha,i})=E_{\alpha,i}$. Then 
 Proposition~\ref{prop:NC} implies that~$ \mu_{\alpha,i}$ 
restricts to a nonzero $L$\yh-invariant section on~$C$.

Since $Z(L)$ acts trivially on~$C$, it acts by a character on any line
bundle over~$C$. The existence of a nonzero $Z(L)$\yh-invariant section
implies that this character is trivial for the restriction of~$\Ni_{\alpha,i}$.  

Write
$\Ni_{\alpha,i}=\Li^-(\lambda_1)\otimes\Li^-(\lambda_2)\otimes\Li(\mu)$
and fix 
$\alpha_j\not\in\Delta(P)$.
There exists $d>0$, such that~$dx_j$~is the differential at~$1$ of a
one parameter subgroup  of~$Z(L)$. This one parameter subgroup acts
with weight~$\lambda_1(w_1x_{j})$,  $\lambda_2(w_2x_{j}) $ and
$\yh-\mu(vx_j)$ on the fiber over~$w_1\inv\uo^-$, $w_2\inv\uo^-$ and $v\inv\uo$
in~$\Li^-(\lambda_1)$,  $\Li^-(\lambda_2)$ and $\Li(\mu)$ respectively. Thus, the
equality $I^j_{(w_1, w_2,v)}$ follows proving Proposition~\ref{prop:Niface}. 
\end{proof}

\subsection{The line bundles~$\Ni_{\alpha,i}$ and the lines
  $\bl_{\beta,j}$}

Recall the definition of the line   $\bl_{\beta,j}$  from~$\S$1. We now study the restriction of the line bundle~$\Ni_{\alpha,i}$ to the lines
  $\bl_{\beta,j}$. This will be used to apply
  Theorem~\ref{th:restCisom}.

\begin{lemma}\label{lem:Nipos}
  Let~$(\alpha,i)\in \mcD$ and $(\beta,j)\in \mcD$ be two distinct elements.
Then, 
\begin{enumerate}[label={\rm(\roman*)}]
\item the degree of the restriction of~$\Ni_{\alpha,i}$ to~$\bl_{\alpha,i}$~is positive.
\item the degree of the restriction of~$\Ni_{\alpha,i}$ to~$\bl_{\beta,j}$~is nonnegative.
\end{enumerate}
\end{lemma}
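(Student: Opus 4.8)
The strategy is to compute all these degrees explicitly using the fact that the line bundles $\Ni_{\alpha,i}$ and the $T$-stable curves $\bl_{\beta,j}$ are all very explicit. Each curve $\bl_{\beta,j}$ is a $T$-stable $\PP^1$ with two $T$-fixed points $x_0$ and $x_{\beta,j}$, where $x_0 = (w_1^{-1}\uo^-, w_2^{-1}\uo^-, v^{-1}\uo)$. The degree of the restriction of a $G$-linearized line bundle $\Li = \Li^-(\lambda_1)\boxtimes\Li^-(\lambda_2)\boxtimes\Li(\mu)$ to such a curve equals the difference of the $T$-weights of $\Li$ at the two fixed points; this is the standard localization computation for line bundles on $\PP^1$. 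So the first step is to write down, for each $(\beta,j)\in\mcD$, the weight difference of $\Li$ along $\bl_{\beta,j}$ in terms of $\lambda_1,\lambda_2,\mu$ and the roots $\beta, w_i^{-1}\beta, v^{-1}\beta$; this gives a linear functional $\mathrm{d}_{\beta,j}(\Li)$.

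Next I would identify the line bundles $\Ni_{\alpha,i}$ concretely. For $(\alpha,i)\in\mcD_2$ or $\mcD_3$, by Definition~\ref{def:Na2} we have $\Ni_{\alpha,i} = \Mi_{\alpha,j} = p_j^*(\Li^-_{\varpi_\alpha})\otimes p_3^*(\Li_{\varpi_\alpha})$ for an appropriate $j\in\{1,2\}$; here the computation of $\mathrm{d}_{\beta,j}(\Mi_{\alpha,k})$ reduces to pairings $\langle \varpi_\alpha, -\rangle$ against coroot-type elements, using that $F_{\alpha,k}$ is the zero locus of the corresponding section (Lemma~\ref{lem:linebundlebord}) and that the order of vanishing of a $G$-invariant section along a $T$-stable curve is controlled by whether the curve meets the divisor. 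For $(\alpha,i)\in\mcD_1$, $\Ni_{\alpha,i}$ is the bundle $\Li_{s_\alpha w_1, w_2, v}$ (or its analogues) from Proposition~\ref{prop:linebundleSchubdiv}, whose zero locus is exactly $E_{\alpha,i} = E_{s_\alpha w_1, w_2, v}$. In all cases the key is that $\Ni_{\alpha,i}$ is the line bundle of an explicit $G$-invariant divisor $D_{\alpha,i}$ (either $E_{\alpha,i}$ or some $F_{\alpha,j}\supseteq E_{\alpha,i}$), so the degree of $\Ni_{\alpha,i}$ along a $T$-stable curve $\bl$ equals the intersection multiplicity of $\bl$ with $D_{\alpha,i}$, which is nonnegative, and is positive precisely when $\bl\not\subseteq D_{\alpha,i}$ but $\bl$ meets $D_{\alpha,i}$.

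So the proof reduces to two combinatorial checks. For part (1): the curve $\bl_{\alpha,i}$ joins $x_0$ to $x_{\alpha,i}$, and I must show $x_0\notin D_{\alpha,i}$ while the section vanishes somewhere on $\bl_{\alpha,i}$ (e.g.\ at $x_{\alpha,i}$, or with positive total multiplicity). That $x_0\notin E_{\alpha,i}$ is essentially the content of Proposition~\ref{prop:NC} specialized to the base point, together with the definitions of $\Delta^+(w_i)$ and $\Delta^-(v)$ (the condition $s_\alpha w_i\in W^P$, $\ell(s_\alpha w_i)=\ell(w_i)+1$, etc., is exactly what makes $x_{\alpha,i}$ land in the relevant Schubert stratum hitting the divisor). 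For part (2): I must show that for $(\beta,j)\neq(\alpha,i)$, either $\bl_{\beta,j}\subseteq D_{\alpha,i}$ (degree $0$ contribution properly interpreted) or $\bl_{\beta,j}$ meets $D_{\alpha,i}$ with nonnegative multiplicity — this is automatic since $D_{\alpha,i}$ is an effective divisor, provided $\bl_{\beta,j}$ is not contained in $D_{\alpha,i}$; and if it is contained, the restriction of $\Ni_{\alpha,i}$ to $\bl_{\beta,j}$ is still a line bundle on $\PP^1$ whose degree we compute directly from weights and check is $\geq 0$. \textbf{The main obstacle} I anticipate is the bookkeeping when $\bl_{\beta,j}$ is contained in the zero divisor $D_{\alpha,i}$ (which can happen, since the $F_{\alpha,j}$ and $E$'s are large $G$-stable subvarieties): there the geometric "intersection multiplicity $\geq 0$" argument degenerates and one genuinely has to compute the $T$-weight difference $\mathrm{d}_{\beta,j}(\Ni_{\alpha,i})$ by hand and verify its sign using the Weyl-group combinatorics of $(w_1,w_2,v)$ and the hypothesis that $\epsilon_P^v$ occurs with coefficient exactly $1$ in $\epsilon_P^{w_1}\odot_0\epsilon_P^{w_2}$ — this last hypothesis is what should force the delicate nonnegativity in the borderline cases, likely via the same dimension/transversality input as in Proposition~\ref{prop:freg} and Corollary~\ref{cor:nontransv}.
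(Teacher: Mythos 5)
Your starting framework (the degree of a $G$-linearized line bundle along a $T$-stable $\PP^1$, read off either from the $T$-weights at the two fixed points or as the vanishing order of the invariant section) is reasonable, but the execution diverges from what is actually needed, and part (i) has a real gap. First, part (ii) is much simpler than you anticipate: every $\Ni_{\alpha,i}$ is of the form $\Li^-(\lambda_1)\boxtimes\Li^-(\lambda_2)\boxtimes\Li(\mu)$ with $(\lambda_1,\lambda_2,\mu)\in P_+^3$ (this is built into Proposition~\ref{prop:linebundleSchubdiv}, and is obvious for the $\Mi_{\alpha,j}$, which are made from fundamental weights), and the degree along $\bl_{\beta,j}$ is exactly $\lambda_j(\beta^\vee)$ for $j=1,2$ and $\mu(\beta^\vee)$ for $j=3$. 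Nonnegativity is therefore immediate from dominance; no intersection multiplicities, no transversality, and no use of the coefficient-one hypothesis enter. Your anticipated ``main obstacle'' (curves contained in the divisor) is not an obstacle, because the weight computation is unconditional.

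The genuine gap is in part (i). You propose to show that $x_0\notin D_{\alpha,i}$ while the section vanishes at $x_{\alpha,i}$, so that the section has an isolated zero on $\bl_{\alpha,i}$. But Proposition~\ref{prop:NC} does \emph{not} specialize to the base point: its proof chooses \emph{generic} $l_1,l_2,l_3\in L$ precisely because the triple intersection of translated Schubert varieties at the identity cosets need not be empty; so $x_0$ may well lie in $E_{\alpha,i}$, and $\bl_{\alpha,i}$ may even be contained in the divisor, in which case counting zeros of the section on the curve tells you nothing. The weight computation only gives $\lambda_i(\alpha^\vee)\geq 0$, so you still need an argument for strict positivity. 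The paper's mechanism is a saturation argument you are missing: if $\lambda_i(\alpha^\vee)=0$, then $\Ni_{\alpha,i}$ restricts trivially to every fiber of the projection $\pi_\alpha$ collapsing the $P_\alpha^{\mp}$-direction in the $i$-th factor, so the zero locus of the $G$-invariant section is a union of such fibers, i.e.\ of the form $\pi_\alpha^{-1}(S)$; since that zero locus contains $E_{s_\alpha w_1,w_2,v}=G\cdot\bar C^+_{s_\alpha w_1,w_2,v}$ (or the appropriate analogue, or at least contains it in the $\mcD_3$ case), its $\pi_\alpha$-saturation contains $G\cdot\bar C^+_{w_1,w_2,v}$, which is all of $\X$ by \cite[Proposition 3.5]{BrownKumar} because the structure constant is nonzero --- contradicting $\mu_{\alpha,i}\neq 0$. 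Without this (or some substitute), your proof of (i) does not go through.
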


\begin{proof}
   Take $(\alpha,1) \in \mcD$. Then, as in Section~\ref{sec1},
$$\bl_{\alpha,1} = (w_1^{-1}P_\alpha^-\uo^-, w_2^{-1}\uo^-, v^{-1}\uo).$$
Since the line bundle~$\Ni_{\alpha,i}$~has the form~$\Li^-(\lambda_1)\boxtimes \Li^-(\lambda_2)\boxtimes \Li(\mu)$ for some~$ 
(\lambda_1, \lambda_2, \mu)\in P_+^{3}$ (cf. Corollary \ref{coro6.6.1}),
$${\Ni_{\alpha,1}}\restrict{\bl_{\alpha,1}}\simeq \Li^-(\lambda_1)\restrict{w_1^{-1}P_\alpha^-\uo^-},$$
which is of degree
$$(w_1^{-1}\lambda_1)(w_1^{-1}\alpha^\vee)=\lambda_1(\alpha^\vee)\geq 0.$$
Assume, if possible,  that~$\lambda_1(\alpha^\vee)= 0$. Then, the zero set~$Z(\mu_{\alpha, 1})$ would be of the form~$\pi_\alpha^{-1}(S)$ for some~$S\subset G/P_\alpha^-\times G/B^-\times G/B$, where 
$$\pi_\alpha: G/B^-\times G/B^-\times G/B \to G/P_\alpha^-\times G/B^-\times G/B$$
is the projection. 

Then, by Corollary \ref{coro6.6.1} and 
Equation~\eqref{eq:defE},
$$Z(\mu_{\alpha, 1}) = E_{\alpha,1}=G\cdot \bar{C}^+_{s_\alpha w_1, w_2, v},$$
and hence we would have 
$$Z(\mu_{\alpha, 1}) \supset G\cdot \bar{C}^+_{w_1, w_2, v} =\X,$$
where the last equality follows from \cite[Proposition~3.5]{BrownKumar} since $\eps_P^v$ occurs with nonzero coefficient in~$\eps_P^{w_1}\cdot \eps_P^{w_2}$. This contradicts the nonvanishing of~$\mu_{\alpha, 1}$. Thus, 
$\lambda_1(\alpha^\vee) > 0$, proving (i) for~$(\alpha,1) \in \mcD$. 
The same proof works for any~$(\alpha,i) \in \mcD$ to prove (i).

To prove (ii), we still take  $(\alpha,1) \in \mcD$ and $(\beta,j)  \in \mcD$ for~$j=1,2$. Then, 
$${\Ni_{\alpha,1}}\restrict{\bl_{\beta,j}}\simeq \Li^-(\lambda_j)\restrict{w_j^{-1}P_\beta^-\uo^-},$$
which is of degree
$$(w_j^{-1}\lambda_j)(w_j^{-1}\beta^\vee)=\lambda_j(\beta^\vee)\geq 0.$$
For~$(\beta, 3) \in \mcD$,
$${\Ni_{\alpha,1}}\restrict{\bl_{\beta,3}}\simeq \Li(\mu)\restrict{v^{-1}P_\beta\uo},$$
which is of degree
$$(v^{-1}\mu)(v^{-1}\beta^\vee)=\mu (\beta^\vee)\geq 0.$$
This proves (ii) for~$(\alpha,1) \in \mcD$. The same proof gives (ii) for any~$(\alpha,i) \in \mcD$.
\end{proof}

\subsection{Conclusion of the proof of Theorem~\ref{th:codimface}}

Let $w_1,w_2, v\in W^P$~be as in Theorem~\ref{th:codimface}, i.e.,  $\eps_P^v$ occurs with coefficient~$1$ in 
the deformed product
$\eps_P^{w_1}\odot_0 \eps_P^{w_2}.$

Set~$d= 2 \dim \lh +\sharp \Delta(P)$. 
Let $\face=\face^P_{w_1, w_2, v}$~be the convex cone generated by the weights~$(\lambda_1,\lambda_2,\mu)\in\Gamma(\mathfrak{g})$ satisfying Identity $I^j_{(w_1,w_2,v)}$ for all $\alpha_j\not\in\Delta(P)$ as in Theorem~\ref{th:codimface}. 
Since the linear forms~$\{I^j_{(w_1, w_2,v)}\}_{\alpha_j\in \Delta \backslash \Delta(P)}$  restricted to~$E_\lg$ (cf. Proposition~\ref{prop:span}) defining $\face$ are
linearly independent, the dimension of~$\face$~is at most~$d$.

\bigskip
We now have to produce `enough' points in~$\face$. To do this we consider the
restriction map~$\Pic^{G^3}(\X)\longto\Pic^{L^3}(C)$ and we apply
Theorem~\ref{th:restCisom} to sufficiently many line bundles~$\Li$
such that~$\Ho(C,\Li\restrict{C})^L\neq\{0\}$. 

\bigskip 
Observe that, for any~$w\in W^P$, the map~$$L/B_L^- \to Lw^{-1}\uo^-\subset G/B^-, \,\,\, lB_L^- \mapsto lw^{-1}\uo^-\,\,\,\text{is an $L$-equivariant isomorphism}$$
and also the map~$$L/B_L \to Lw^{-1}\uo\subset G/B, \,\,\, lB_L \mapsto lw^{-1}\uo\,\,\,\text{is an isomorphism},$$
where $B_L:= B\cap L$~is the standard Borel subgroup of~$L$ and $B_L^-:= B^-\cap L$~is the standard opposite Borel subgroup of~$L$. (To prove the above two isomorphisms, use the fact that~$w\Delta_P\subset \Phi^+$.)
Thus,  the restriction map~$\Pic^{G^3}(\X)\simeq (\lh^*_\ZZ)^3
\longto\Pic^{L^3}(C)\simeq (\lh^*_\ZZ)^3$~is an isomorphism.
Let~${\mathfrak l}$ denote the Lie algebra of~$L$.

\begin{lemma} \label{lemma5.2}
There exist $\Li_1,\dots,\Li_d\in
\Pic^{G^3}(\X)$ such that 
\begin{enumerate}[label={\rm(\roman*)}]
\item $\Li_1,\dots,\Li_d\in
\Pic^{G^3}(\X)\otimes\QQ$ are linearly independent;
\item The restriction of each~$\Li_i$ to~$C$ belongs to~$\Gamma(\Liel)$.  
\end{enumerate}
\end{lemma}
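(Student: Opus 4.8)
The plan is to reduce Lemma~\ref{lemma5.2} to a statement about the dimension of the tensor cone of the Levi, which I would then read off from (the proof of) Proposition~\ref{prop:span}. First I would pin down the identifications: by the isomorphisms $Lw_i^{-1}\uo^-\cong L/B_L^-$ and $Lv^{-1}\uo\cong L/B_L$ recalled just above the lemma, the restriction map $\Pic^{G^3}(\X)\otimes\QQ\to\Pic^{L^3}(C)\otimes\QQ$ is a linear isomorphism, and $\Pic^{L^3}(C)\otimes\QQ$ is identified with $(\lh^*_\QQ)^3$ by reading off the character of an $L$-equivariant line bundle on each of the three factors of $C$. Under these identifications, the Borel--Weil theorem for the Levi group $L$ (applied to $B_L$ and $B_L^-$) shows that $\Ho(C,\Li)^L$ is a tensor-product multiplicity space for the Levi subalgebra $\Liel$; consequently the locus of line bundles $\Li$ with $\Li$ restricted to $C$ in $\Gamma(\Liel)$ is exactly the image, under the linear automorphism of $(\lh^*_\QQ)^3$ implementing the translations by $w_1^{-1},w_2^{-1},v^{-1}$, of the tensor cone $\Gamma(\Liel)\subset(\lh^*_\QQ)^3$.

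Next I would compute the dimension of the $\QQ$-span of $\Gamma(\Liel)$. The derived subalgebra $[\Liel,\Liel]$ is the symmetrizable Kac--Moody Lie algebra with Cartan $\lh$ and simple roots $\Delta(P)$; it may be decomposable, namely when $\Delta(P)$ is disconnected in the Dynkin diagram of $A$. I would observe that the proof of Proposition~\ref{prop:span} applies verbatim to $\Liel$: its only ingredients are the Cartan-component inclusion \eqref{eq:26} and the highest-weight vector $f_iv_+\otimes v_+-v_+\otimes f_iv_+$ of \eqref{eq:22}, both of which involve a single simple root only and use no indecomposability hypothesis. This yields that $\Gamma(\Liel)$ has nonempty interior in the rational subspace
$$
E_{\Liel}=\{(\lambda_1,\lambda_2,\mu)\in(\lh^*_\QQ)^3\,:\,\lambda_1+\lambda_2-\mu\in\Span_\QQ(\Delta(P))\},
$$
whose dimension is $2\dim\lh+\sharp\Delta(P)=d$.

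Finally I would extract the line bundles. Since a convex subset with nonempty interior in a $d$-dimensional rational vector space is contained in no proper linear subspace, one can pick inductively, among rational points, $\ell_1,\dots,\ell_d\in\Gamma(\Liel)$ that are linearly independent; clearing denominators (harmless since $\Gamma(\Liel)$ is a cone) we may assume $\ell_i\in\Pic^{L^3}(C)$. Taking $\Li_i\in\Pic^{G^3}(\X)$ to be the preimage of $\ell_i$ under the restriction isomorphism gives property (i) at once, and (ii) holds by construction.

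The only points that need any care are the Borel--Weil identification in the first step --- checking that $\Ho(C,\Li)^L$ is genuinely governed by $\Gamma(\Liel)$, with the correct $W$-twist --- and the use of Proposition~\ref{prop:span} for a Levi that need not be indecomposable; I expect both to be routine once the identifications are unwound, so I do not anticipate a real obstacle. For the later use of the lemma one also wants that the $\Li_i$ automatically satisfy the equations $I^j_{(w_1,w_2,v)}=0$ for $\alpha_j\notin\Delta(P)$: this is immediate, since $\Gamma(\Liel)\subset E_{\Liel}$ forces $\lambda_1+\lambda_2-\mu$ (after the $W$-translations) to lie in $\Span_\QQ(\Delta(P))$, and pairing with $x_j$ kills it; the further adjustment of the $\Li_i$ by large multiples of the $\Ni_{\alpha,i}$ so that Theorem~\ref{th:restCisom} applies belongs to the conclusion of the proof of Theorem~\ref{th:codimface} rather than to this lemma.
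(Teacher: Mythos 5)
Your proposal is correct and follows essentially the same route as the paper: the paper's proof simply applies Proposition~\ref{prop:span} to the Levi $\Liel$ to conclude that $\Gamma(\Liel)$ has dimension $d=2\dim\lh+\sharp\Delta(P)$, picks $d$ linearly independent elements there, and pulls them back through the isomorphism $\Pic^{G^3}(\X)\simeq\Pic^{L^3}(C)$ stated just before the lemma. The extra care you take (the Borel--Weil identification with the $w_1^{-1},w_2^{-1},v^{-1}$ twist, and the check that the proof of Proposition~\ref{prop:span} needs no indecomposability hypothesis when applied to $\Liel$) is sound and fills in points the paper leaves implicit, but it does not change the argument.
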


\begin{proof}
By Proposition~\ref{prop:span}, $\Gamma({\mathfrak l})$~has dimension
$d$. (Observe that Proposition~3.1 remains valid for~$\mathfrak{l}$ by the same proof.) Hence, $\Gamma({\mathfrak l})\subset \Pic^{L^3}(C)\otimes_{\mathbb{Z}}\mathbb{Q}$ contains $d$~linearly independent elements. Then, the lemma follows from the isomorphism
${\Pic^{G^3}(\X)\simeq\Pic^{L^3}(C)}$.
\end{proof}

\begin{proof}[Proof of Theorem~\ref{th:codimface}]
Up to taking tensor powers, we may assume that the restriction of~$\Li_i$ to~$C$ admits a nonzero $L$\yh-invariant section~$\sigma_i$ (cf. \cite[Proof of Theorem~3.2]{BrownKumar}).

By Lemma~\ref{lem:Nipos}, there exists
$(a_{\alpha,i})_{(\alpha,i)\in\mcD}\in \NN^\mcD$ such that~$\Ni:=\sum_{(\alpha,i)\in\mcD}a_{\alpha,i} \Ni_{\alpha,i}$ satisfies:
$$\Li_k\otimes\Ni \,\,\,\text{is nonnegative for all $k$ when restricted to
  any~$\bl_{\beta,j}$ for~$(\beta,j)\in\mcD$. }$$
Moreover, up to changing $\Ni$ by~$2\Ni$ if necessary, we may assume
that $$\Li_1\otimes\Ni,\dots,\Li_d\otimes\Ni\in
\Pic^{G^3}(\X)\otimes\QQ$$ are linearly independent.

By Corollary ~\ref{coro6.6.1}, $\Ni$~has a $G$\yh-invariant section~$\sigma_\Ni$
that does not vanish identically on~$C$. Then,
$$
\tilde\sigma_i \in \Ho(C,\Li_i\otimes\Ni)^L\backslash\{0\}, \,\,\,\text{where $\tilde\sigma_i :=(\sigma_i\otimes {\sigma_\Ni})\restrict{C}$}.
$$
Moreover, since $\tilde\sigma_i$  is not identically zero on~$C$, by Proposition~\ref{prop:Niface}, each~$\Li_i\otimes \Ni$ satisfies the identity $I^j_{(w_1, w_2, v)} $  of Theorem~\ref{th:codimface} for all $\alpha_j\in \Delta\backslash \Delta(P)$.
 
By Theorem~\ref{th:restCisom}, each~$\tilde\sigma_i$
can be extended to a $G$\yh-invariant section~$\tilde\sigma_i$ of~$\Li_i\otimes\Ni$. In particular, $\Li_i\otimes\Ni$ belongs to~$\Gamma(\lg)$. 
Thus, the dimension of~$\face$~is at least $d$ and hence it is exactly $d$. This proves the theorem.\end{proof}

\bibliographystyle{smfalpha}
\bibliography{irredKM}
\end{document}